\numberwithin{equation}{section}
\theoremstyle{definition}
\newtheorem{thm}{Theorem}[section]
\newtheorem{lem}[thm]{Lemma}
\newtheorem{rem}[thm]{Remark}
\newtheorem{prop}[thm]{Proposition}
\newtheorem{ques}[thm]{Question}
\newtheorem{cor}[thm]{Corollary}
\def \e {\epsilon}
\def \E {\mathbb{E}}
\def \N {\mathbb{N}}
\def \Z {\mathbb{Z}}
\thanks{The second author was supported by NSF under grant DMS-1500575.}
\begin{document}
	
\title{Under- and over-independence in measure preserving systems}

\author{Terry Adams}
\address{U.S. Government, 9800 Savage Road, Ft. Meade, MD 20755} \email{terry@ganita.org}

\author{Vitaly Bergelson}
\address{Department of Mathematics, The Ohio State University, 231 West 18th Avenue, Columbus OH, 43210-1174, USA} \email{bergelson.1@osu.edu}

\author{Wenbo Sun}
\address{Department of Mathematics, The Ohio State University, 231 West 18th Avenue, Columbus OH, 43210-1174, USA}
\email{sun.1991@osu.edu}

\date{\today}

\begin{abstract}
	We introduce the notions of over- and under-independence for weakly mixing and (free) ergodic measure preserving actions and establish new results which complement and extend the theorems obtained in \cite{BFW} and \cite{Adams}. Here is a sample of results obtained in this paper:
	\begin{itemize}
		\item (Existence of density-1 UI and OI set) Let $(X,\mathcal{B},\mu,T)$ be an invertible probability measure preserving weakly mixing system. Then for any $d\in\mathbb{N}$, any non-constant integer-valued polynomials $p_{1},p_{2},\dots,p_{d}$ such that $p_{i}-p_{j}$ are also non-constant for all $i\neq j$,
		
		(i) there is $A\in\mathcal{B}$ such that the set
		$$\{n\in\mathbb{N}\colon\mu(A\cap T^{p_{1}(n)}A\cap\dots\cap T^{p_{d}(n)}A)<\mu(A)^{d+1}\}$$
		is of density 1.
		
		(ii) there is $A\in\mathcal{B}$ such that the set
		$$\{n\in\mathbb{N}\colon\mu(A\cap T^{p_{1}(n)}A\cap\dots\cap T^{p_{d}(n)}A)>\mu(A)^{d+1}\}$$
		is of density 1. 
		\item (Existence of Ces\`aro OI set) Let $(X,\mathcal{B},\mu,T)$ be a free, invertible, ergodic probability measure preserving system and $M\in\mathbb{N}$. %Suppose that $X$ contains an ergodic component which is aperiodic. 
		Then
		there is $A\in\mathcal{B}$ such that 
		$$\frac{1}{N}\sum_{n=M}^{N+M-1}\mu(A\cap T^{n}A)>\mu(A)^{2}$$
		for all $N\in\mathbb{N}$.
	\item
		(Nonexistence of Ces\`aro UI set) Let $(X,\mathcal{B},\mu,T)$ be an invertible probability  measure preserving system. For any measurable set $A$ satisfying $\mu(A) \in (0,1)$, 
		there exist infinitely many $N \in \N$ such that 
		\[
		\frac{1}{N} \sum_{n=0}^{N-1} \mu ( A \cap T^{n}A) > \mu(A)^2 . 
		\]
	\end{itemize}
\end{abstract}

\maketitle

\section{introduction}
The classical Poincar\'e Recurrence Theorem \cite{P} states that for any probability measure preserving system $(X,\mathcal{B},\mu,T)$, any $A\in\mathcal{B}$, and almost every $x\in A$, there exists $n\in\mathbb{N}$ such that $T^{n}x\in A$. \footnote{In this paper we will, as a rule, assume that the measure spaces we deal with are standard, that is, isomorphic mod 0  to a disjoint union of a finite number of atoms with an interval equipped with the Lebesgue measure.} This result is derived in \cite{P} from the fact (usually also called  Poincar\'e Recurrence Theorem)  that if $\mu(A)>0$, then $\mu(A\cap T^{-n}A)>0$ for some $n\in\mathbb{N}$. The correlation sequence $\mu(A\cap T^{-n}A), n\in\mathbb{N}$ is one of the most basic objects of ergodic theory. For example, the classical notions of ergodicity, weak mixing and mixing can be formulated as follows:
\begin{itemize}
	\item $(X,\mathcal{B},\mu,T)$ is \emph{ergodic} if and only if for all $A\in\mathcal{B}$,
	\begin{equation}\label{intro:1}
	\begin{split}
	\lim_{N\to\infty}\frac{1}{N}\sum_{n=0}^{N-1}\mu(A\cap T^{-n}A)=\mu(A)^{2};
	\end{split}
	\end{equation}
	\item $(X,\mathcal{B},\mu,T)$ is \emph{weakly mixing} if and only if for all $A\in\mathcal{B}$, there exists a set $E\subseteq\mathbb{N}$ with $d(E):=\lim_{N\to\infty}\frac{\vert E\cap\{0,1,\dots,N-1\}\vert}{N}=1$ such that
	\begin{equation}\label{intro:2}
		\begin{split}
		\lim_{n\to\infty,n\in E}\mu(A\cap T^{-n}A)=\mu(A)^{2};
		\end{split}
	\end{equation}
	\item $(X,\mathcal{B},\mu,T)$ is \emph{mixing} if and only if for all $A\in\mathcal{B}$,
		\begin{equation}\label{intro:3}
		\begin{split}
		\lim_{n\to\infty}\mu(A\cap T^{-n}A)=\mu(A)^{2}.
		\end{split}
		\end{equation}
\end{itemize}
While in each of  the above formulas the limit on the right is $\mu(A)^{2}$, it is apriori not clear whether the quantities in the left parts of the formulas may stay for all $n\neq 0$ below or above this limit. The following question was asked by the second author in \cite{BI}:
\begin{ques}\label{q50}
Is it true that for any invertible probability measure preserving mixing system $(X,\mathcal{B},\mu,T)$, there exists $A\in\mathcal{B}$ with $\mu(A)>0$ such that for all $n\neq 0$, $\mu(A\cap T^{n}A)<\mu(A)^{2}$? How about the reverse inequality $\mu(A\cap T^{n}A)>\mu(A)^{2}$?
\end{ques}
We will be referring to the phenomena alluded to in the above question as under- and over-independence (and use the abbreviation "UI" and "OI" when dealing with sets possessing these properties).\footnote{In \cite{BFW}, under- and over-independence are called under- and over-recurrence.} After staying dormant for about 20 years, the subject of under- and over-independence came to life in the recent paper \cite{BFW} where the authors showed that
\begin{itemize}
	\item not all mixing systems have UI sets;
	\item all ergodic systems with positive entropy have UI sets;
	\item there exist mixing systems which have both UI and OI sets.
\end{itemize} 
%TMA edited paragraph below. 
In \cite{Adams}, it was shown that actually every mixing system has an OI set; it is also proved in \cite{Adams} by a method different from that in \cite{BFW} that not every mixing system has a UI set. Analyzing the above results, one arrives at the natural conclusion that over-independence occurs more readily than under-independence. In spite of this trend, a positive result for under-independence is obtained, when it is shown that every weakly mixing system has density-1 UI sets. Thus, we are motivated by improving our intuition for under- and over-independence, as well as expanding results from the classic $\mathbb{Z}$-action case to more general situations. 
\subsection{Under- and over-independence for weakly mixing systems}
First of all, it is natural to inquire whether appropriately modified versions of under- and over-independence hold for weakly mixing systems. Taking into account the natural mode of convergence to independence in weakly mixing systems (see (\ref{intro:2}) above), we have the following analogue of Question \ref{q50}:
\begin{ques}
	Let $(X,\mathcal{B},\mu,T)$ be an invertible probability measure preserving weakly mixing system.
	
	(i) Is there a set $A\in\mathcal{B}$ with $\mu(A)>0$ such that for some $E\subseteq\mathbb{N}$ with $d(E)=1$, we have that $\mu(A\cap T^{n}A)<\mu(A)^{2}$ for all $n\in E$?
	
	(ii) Is there a set $A\in\mathcal{B}$ with $\mu(A)>0$ such that for some $E\subseteq\mathbb{N}$ with $d(E)=1$, we have that $\mu(A\cap T^{n}A)>\mu(A)^{2}$ for all $n\in E$?   
\end{ques} 	
	We show in this paper that the answers to both (i) and (ii) are YES. 
	Moreover,
	we obtain a general result pertaining to under- and over-independence
	for multiple recurrence in weakly mixing systems. We formulate first a relevant version of the polynomial weakly mixing theorem which was obtained in \cite{BPet}:
	\begin{thm}[\cite{BPet}]\label{thm:BP}
		An invertible probability measure preserving system $(X,\mathcal{B},\mu,T)$ is weakly mixing if and only if for any $d\in\mathbb{N}$, any non-constant integer-valued polynomials $p_{1},\dots,p_{d}$ such that $p_{i}-p_{j}$ are also non-constant for all $i\neq j$, and any $A\in\mathcal{B}$, there exists a set $E\subseteq\mathbb{N}$ with $d(E)=1$ such that
		\begin{equation}\nonumber
		\begin{split}
		\lim_{n\to\infty,n\in E}\mu(A\cap T^{p_{1}(n)}A\cap\dots\cap T^{p_{d}(n)}A)=\mu(A)^{d+1}.
		\end{split}
		\end{equation}
	\end{thm}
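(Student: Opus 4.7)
The ``only if'' direction is immediate: setting $d=1$ and $p_1(n)=n$, the conclusion specializes to the standard density-1 characterization of weak mixing recalled in (\ref{intro:2}).

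For the converse, the plan is first to prove the following $L^2$-convergence statement: assuming $T$ is weakly mixing, for every $f_1,\dots,f_d\in L^\infty(\mu)$,
\begin{equation*}
\Bigl\|\frac{1}{N}\sum_{n=0}^{N-1}\prod_{i=1}^{d} T^{p_i(n)} f_i - \prod_{i=1}^{d}\int f_i\,d\mu\Bigr\|_{L^2(\mu)} \longrightarrow 0.
\end{equation*}
Pairing with $1_A$ and specializing to $f_i=1_A$ then yields Cesàro convergence of the correlation sequence $\mu(A\cap T^{p_1(n)}A\cap\cdots\cap T^{p_d(n)}A)$ to $\mu(A)^{d+1}$.

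The $L^2$-statement is proved by Bergelson's polynomial exhaustion (\emph{PET}) induction. Assign to the family $\{p_1,\dots,p_d\}$ a complexity --- e.g., the lexicographically ordered multi-set of degrees together with the count of polynomials of maximal degree --- whose minimal value corresponds to a single linear polynomial, where the conclusion follows from the spectral characterization of weak mixing. For the inductive step, set $v_n=\prod_{i=1}^d T^{p_i(n)}f_i-\prod_{i=1}^d\int f_i\,d\mu$ and apply the van der Corput lemma for bounded sequences in a Hilbert space to reduce the problem to
\begin{equation*}
\lim_{H\to\infty}\frac{1}{H}\sum_{h=0}^{H-1}\limsup_{N\to\infty}\Bigl|\frac{1}{N}\sum_{n=0}^{N-1}\langle v_{n+h},v_n\rangle\Bigr|=0.
\end{equation*}
Expanding the inner product using $T$-invariance rewrites each inner correlation as an average of products $\prod_j T^{q_j^{(h)}(n)}g_j^{(h)}$, where the new polynomials $q_j^{(h)}$ are built from differences $p_i(n+h)-p_k(n)$ after subtracting an appropriate reference polynomial. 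The combinatorial heart of PET is the verification that for each $h$ outside a finite set, the derived family still satisfies the essential-distinctness hypothesis and has strictly lower complexity, allowing the induction to close.

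Finally, to upgrade Cesàro convergence to density-1 convergence, apply the same $L^2$-statement inside the product system $(X\times X,\mu\times\mu,T\times T)$ --- which is weakly mixing because $T$ is --- to obtain Cesàro convergence of $\mu(A\cap T^{p_1(n)}A\cap\cdots\cap T^{p_d(n)}A)^2$ to $\mu(A)^{2(d+1)}$. Expanding the square and combining yields
\begin{equation*}
\frac{1}{N}\sum_{n=0}^{N-1}\bigl(\mu(A\cap T^{p_1(n)}A\cap\cdots\cap T^{p_d(n)}A)-\mu(A)^{d+1}\bigr)^2 \longrightarrow 0,
\end{equation*}
and the Koopman--von Neumann lemma then produces a set $E\subseteq\mathbb{N}$ with $d(E)=1$ along which the correlations converge to $\mu(A)^{d+1}$. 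The main obstacle is the PET induction itself: defining the complexity ordering so that the families produced by the van der Corput step genuinely have smaller weight while still satisfying the non-constant-difference hypothesis requires careful bookkeeping of leading coefficients and degrees of the derived polynomials $q_j^{(h)}$.
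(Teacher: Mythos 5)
This theorem is imported from \cite{BPet} and the paper gives no proof of its own; your outline is a faithful reconstruction of Bergelson's original argument (PET induction on the weight of the polynomial family via the Hilbert-space van der Corput lemma to get $L^2$ mean convergence, then passage to $T\times T$ and the Koopman--von Neumann lemma to upgrade Ces\`aro convergence to density-1 convergence), so there is nothing to compare against in the text and the approach is the standard, correct one. One trivial slip: you have the labels reversed --- specializing to $d=1$, $p_1(n)=n$ proves the ``if'' direction (the polynomial statement implies weak mixing), while the PET argument is the ``only if'' direction.
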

Here is now the formulation of our result pertaining to over- and under-independence in weakly mixing systems:
		\begin{thm} [Existence of density-1 UI and OI set]
			\label{dense-one-under}
			Let $(X,\mathcal{B},\mu,T)$ be an invertible probability measure preserving weakly mixing system. Then for any $d\in\mathbb{N}$, any non-constant integer-valued polynomials $p_{1},p_{2},\dots,p_{d}$ such that $p_{i}-p_{j}$ are also non-constant for all $i\neq j$,
			
			(i) there is $A\in\mathcal{B}$ such that the set
			$$\{n\in\mathbb{N}\colon\mu(A\cap T^{p_{1}(n)}A\cap\dots\cap T^{p_{d}(n)}A)<\mu(A)^{d+1}\}$$
			is of density 1;
			
			(ii) there is $A\in\mathcal{B}$ such that the set
			$$\{n\in\mathbb{N}\colon\mu(A\cap T^{p_{1}(n)}A\cap\dots\cap T^{p_{d}(n)}A)>\mu(A)^{d+1}\}$$
			is of density 1. \footnote{
				In fact, the polynomial functions $p_{1}(n),\dots,p_{d}(n)$ in Part (ii) of Theorem \ref{dense-one-under} can be replaced with any functions $a_{1}(n),\dots,a_{d}(n)$ satisfying the "multiple weakly mixing  theorem", meaning that for all $A\in\mathcal{B}$, there exists $E\subseteq\mathbb{N}$ with $d(E)=1$ such that
				%$$\lim_{N\to\infty}\frac{1}{N}\sum_{n=0}^{N-1}\Bigl\vert\prod_{i=1}^{d} T^{a_{i}(n)}f_{i}-\prod_{i=1}^{d}\int_{X}f_{i}\,d\mu\Bigr\vert=0$$
				$$\lim_{n\to\infty,n\in E}\mu(A\cap T^{a_{1}(n)}A\cap\dots\cap T^{a_{d}(n)}A)=\mu(A)^{d+1}.$$
				%as an $L^{2}(\mu)$-limit for all $f_{1},\dots,f_{d}\in L^{\infty}(\mu)$.
				In particular, one can take $a_{i}(n)$ to be tempered functions or functions from a Hardy field. See \cite{BH}. 									
			}
		\end{thm}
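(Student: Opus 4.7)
My plan is to reduce the multi-correlation problem to pair correlations via a centered binomial expansion, and then to arrange a definite sign for the leading pair terms by a careful construction of $A$. Set $f := 1_A - \mu(A)$ and $p_0(n) \equiv 0$. Expanding $\prod_{i=0}^{d}(f \circ T^{p_i(n)} + \mu(A))$ inside the integral, and using $\int f\,d\mu = 0$ to annihilate the singleton terms, gives
\[
\mu\Bigl(\bigcap_{i=0}^{d} T^{p_i(n)}A\Bigr) - \mu(A)^{d+1} = \sum_{\substack{S \subseteq \{0,\ldots,d\} \\ |S|\ge 2}} \mu(A)^{d+1-|S|} \int \prod_{i\in S} T^{p_i(n)}f \, d\mu.
\]
For each $S$ with $|S|\ge 2$, the subfamily $\{p_i\}_{i\in S}$ still has pairwise non-constant differences, so Theorem \ref{thm:BP} (applied to $f$ via the standard $L^\infty$-approximation) shows each summand is $o(1)$ on a density-$1$ set $E_S$. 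Consequently the entire right-hand side is $o(1)$ on $E := \bigcap_S E_S$, recovering Theorem \ref{thm:BP}, and the remaining task is to control the \emph{sign} of this $o(1)$ correction on a density-$1$ subset of $E$.

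The dominant contributions come from the $\binom{d+1}{2}$ pair terms ($|S|=2$), each of the form $\mu(A)^{d-1}\bigl(\mu(A\cap T^{q_{ij}(n)}A) - \mu(A)^2\bigr)$ with $q_{ij} := p_j - p_i$ a non-constant integer polynomial. I would therefore construct $A$ so that for every $q$ in the finite family $\mathcal{Q} := \{p_j - p_i : 0 \le i < j \le d\}$ one has $\mu(A\cap T^{q(n)}A) < \mu(A)^2 - \delta$ in part (i), respectively $> \mu(A)^2 + \delta$ in part (ii), simultaneously on a density-$1$ set of $n$ for some fixed $\delta > 0$. This is a polynomial version of the $d=1$ density-$1$ UI/OI statement, applied to each $q \in \mathcal{Q}$: I would adapt the Rokhlin-tower cutting construction of \cite{BFW} (for UI) and the construction of \cite{Adams} (for OI), and use Weyl-type equidistribution of each polynomial orbit to synchronize the resulting density-$1$ sets into a common one. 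Once this uniform gap $\delta$ is in place on a density-$1$ subset of $E$, the $|S|\ge 3$ terms cannot overturn the sign, and the conclusion follows.

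The principal obstacle I anticipate is sustaining the uniform gap $\delta$ across the entire finite family $\mathcal{Q}$: Theorem \ref{thm:BP} provides only qualitative $o(1)$ convergence without an explicit rate, so one must either extract a quantitative version or design the construction so that the pair correlations enjoy a definite lower bound by construction. For part (ii) this obstacle is softer, since several groupings of the higher-order terms are non-negative by Cauchy--Schwarz (for instance, suitable partial sums $\sum_{|S|=k}\int\prod_{i\in S}T^{p_i(n)}f\,d\mu$ can often be rewritten as squares of auxiliary averages), which is consistent with the footnote's extension of (ii) to arbitrary sequences satisfying multiple weak mixing. For part (i) the polynomial structure appears to be essential: Weyl equidistribution is what allows the $d=1$ density-$1$ UI property (valid along the full time axis) to be transferred to density-$1$ UI along every non-constant polynomial orbit, simultaneously for all of $\mathcal{Q}$.
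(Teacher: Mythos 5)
Your plan has a fatal internal contradiction at its central step. You propose to construct $A$ so that $\mu(A\cap T^{q(n)}A)<\mu(A)^{2}-\delta$ (resp.\ $>\mu(A)^{2}+\delta$) for a \emph{fixed} $\delta>0$ on a density-$1$ set of $n$, simultaneously for all $q\in\mathcal{Q}$. But since $T$ is weakly mixing and each $q=p_{j}-p_{i}$ is a non-constant polynomial, Theorem \ref{thm:BP} (with $d=1$) forces $\mu(A\cap T^{q(n)}A)\to\mu(A)^{2}$ along a density-$1$ set $E_{q}$ for \emph{every} measurable $A$; the intersection of $E_{q}$ with your proposed gap set would be a density-$1$ (in particular infinite) set on which the correlation is simultaneously within $\delta/2$ of $\mu(A)^{2}$ and at distance $\ge\delta$ from it. So no such $A$ exists, and the entire sign argument collapses. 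The only thing one can hope for is that the deviation is $o(1)$ \emph{of a definite sign}, and then your comparison between the $|S|=2$ terms and the $|S|\ge 3$ terms becomes vacuous: all terms in your expansion are $o(1)$ on density-$1$ sets with no rate, so there is no a priori reason the pair terms dominate, and the Cauchy--Schwarz positivity you invoke for partial sums of higher-order terms does not actually hold for the individual signed quantities $\int\prod_{i\in S}T^{p_{i}(n)}f\,d\mu$ with $|S|$ odd or for mixed groupings.

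The paper does not pass through pair correlations at all. For part (ii) it builds $A=\bigcup_{k}A_{k}$ with $\mu(A_{k})=a_{k}$, $\sum a_{k}=a$, where each new piece $A_{k+1}$ is spread along a tall Rohlin tower (Lemma \ref{roh2}) so that $\mu((C_{k}\cup A_{k+1})\cap T^{p_{1}(n)}A_{k+1}\cap\dots\cap T^{p_{d}(n)}A_{k+1})>(1-\e_{k})\mu(A_{k+1})$ for all $n\le N_{k}$; for each $n$ one splits $A$ into the ``old'' part $C_{k}$ (whose multicorrelation is already close to $\mu(C_{k})^{d+1}$ by weak mixing) and the ``fresh'' tail (which contributes an extra additive term of order $a/(k+2)$, beating the deficit $a^{d+1}-(a-\frac{a}{k+1})^{d+1}$). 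For part (i) the construction is more delicate: each $A_{n}$ is placed along an arithmetic progression of step $\ell_{n}=q^{m}$ inside a Rohlin tower, which forces $\mu(A_{j}\cap T^{p_{1}(i)}A_{j})=0$ unless $\ell_{j}\mid p_{1}(i)$, and the negative sign is obtained from the inequality $\mu(U_{n})^{d+1}>(d+1)\mu(A_{n})$ between the tail measure and the single-piece measure. If you want to salvage your outline, you would need to replace step 3 by a construction of this type in which the sign of the $o(1)$ deviation is controlled directly, term by term in a decomposition of $A$, rather than through a uniform gap that weak mixing itself rules out.
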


	Part (i) of Theorem \ref{dense-one-under} is proved in Section \ref{sec:3} and Part (ii) is proved in Section \ref{sec:2}. We also have a "relative" version of Part (ii) of Theorem \ref{dense-one-under}, which we will prove in Section \ref{sec:4}.

\begin{rem}
Theorem \ref{dense-one-under} is also of interest when one considers the phenomenon of under-independence in mixing systems. While, as was mentioned above, mixing systems do not always have UI sets, they always have, so to say, almost UI sets.	
\end{rem}

In principle, it is conceivable that any weakly mixing system has an OI set, but we were not able to establish this. The following question is open.

\begin{ques}
	Let $(X,\mathcal{B},\mu,T)$ be an invertible probability measure preserving weakly mixing  system. Is there a set $A\in\mathcal{B}$ such that 
	$$\mu(A\cap T^{n}A)>\mu(A)^{2}$$
	for all $n\in\mathbb{Z}$?
\end{ques}

\subsection{Under- and over-independence for ergodic systems}	
We say that a system $(X,\mathcal{B},\mu,T)$ is \emph{free} (or the action $T$ is \emph{free}) if $T^{n}\neq id$ for all $n\neq 0$.
It is also natural to study modified versions of under- and over-independence for free ergodic systems. Taking into account the natural mode of convergence to independence in ergodic systems (see \ref{intro:3} above), we have the following analogue of Question \ref{q50}:
\begin{ques}\label{ques:7}
	Let $(X,\mathcal{B},\mu,T)$ be a free, invertible, ergodic probability measure preserving system and $M\in\mathbb{N}$.
	
	(i) Is there a set $A\in\mathcal{B}$ with $\mu(A)>0$ such that $\frac{1}{N}\sum_{n=M}^{N+M-1}\mu(A\cap T^{n}A)<\mu(A)^{2}$ for all $N\in\mathbb{N}$?
	
	(ii) Is there a set $A\in\mathcal{B}$ with $\mu(A)>0$ such that $\frac{1}{N}\sum_{n=M}^{N+M-1}\mu(A\cap T^{n}A)>\mu(A)^{2}$ for all $N\in\mathbb{N}$?
\end{ques} 		
We remark that the assumption that  $(X,\mathcal{B},\mu,T)$ is free can not be dropped due to the following simple observation. Assume that  $T^{k}=id$ for some $k\in\mathbb{N}$. Let $A\in\mathcal{B}, 0<\mu(A)<1$. By ergodic theorem, we have
$$\frac{1}{kN}\sum_{n=M}^{kN+M-1}\mu(A\cap T^{n}A)=\frac{1}{k}\sum_{n=M}^{k+M-1}\mu(A\cap T^{n}A)=\mu(A)^{2}$$
for all $N\in\mathbb{N}$. This implies that there exist infinitely may $N>0$ such that
$$\frac{1}{N}\sum_{n=M}^{N+M-1}\mu(A\cap T^{n}A)=\mu(A)^{2},$$
and so the answer to either  part of Question \ref{ques:7} is negative for such a system.

%\begin{prop}\label{prop:atom}
%	For every probability measure preserving system $(X,\mathcal{B},\mu,T)$ with $T^{k}=id$ for some $k\in\mathbb{N}$, every $A\in\mathcal{B}$ with $\mu(A)>0$ and every $M\in\mathbb{Z}$, there exist infinitely may $N>0$ such that
%	$$\frac{1}{N}\sum_{n=M}^{N+M-1}\mu(A\cap T^{-n}A)=\mu(A)^{2}.$$
%\end{prop}
%\begin{proof}
%	Suppose that $T^{k}=id$. Let $A\in\mathcal{B}, 0<\mu(A)<1$. By ergodic theorem, we have
%	$$\frac{1}{kN}\sum_{n=M}^{kN+M-1}\mu(A\cap T^{-n}A)=\frac{1}{k}\sum_{n=M}^{k+M-1}\mu(A\cap T^{-n}A)=\mu(A)^{2}$$
%	for all $N\in\mathbb{N}$. 
%\end{proof}

%{\color{red}\textbf{Version 1:}}
	We show in this paper that the answer to (ii) is YES while the answer to (i) is NO if $M=0$. 
	\begin{thm}[Existence of Ces\`aro OI set]\label{thm:e0} Let $(X,\mathcal{B},\mu,T)$ be a free, invertible, ergodic probability measure preserving system and $M\in\mathbb{N}$. %Suppose that $X$ contains an ergodic component which is aperiodic. 
		Then
		there is $A\in\mathcal{B}$ such that 
		$$\frac{1}{N}\sum_{n=M}^{N+M-1}\mu(A\cap T^{n}A)>\mu(A)^{2}$$
		for all $N\in\mathbb{N}$.
	\end{thm}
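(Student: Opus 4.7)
The strategy is to construct $A$ via a Rokhlin tower argument. Since $T$ is free and ergodic, Rokhlin's lemma yields, for any integer $h\geq 1$ and any $\delta>0$, a set $B\in\mathcal{B}$ with $B,TB,\dots,T^{h-1}B$ pairwise disjoint and $\mu\bigl(\bigsqcup_{i=0}^{h-1}T^{i}B\bigr)>1-\delta$. I will take $A$ to be a union of levels of such a tower, $A=\bigsqcup_{i\in S}T^{i}B$, where $S\subset\{0,1,\dots,h-1\}$ is engineered so that $\mu(A\cap T^{M}A)$ carries a built-in strict surplus over $\mu(A)^{2}$ that survives Ces\`aro averaging for every $N\geq 1$.

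The first instinct is the simplest choice $A=B\cup T^{M}B$ inside a very tall tower $(h\gg M)$. Disjointness of tower levels gives $\mu(A\cap T^{M}A)=\mu(B)=\mu(A)/2$, while $\mu(A\cap T^{n}A)=0$ for $n\in[1,h-M-1]\setminus\{M\}$; this is a huge boost relative to $\mu(A)^{2}=4\mu(B)^{2}$. In the small-$N$ regime ($N\ll h$) one has $\frac{1}{N}\sum_{n=M}^{N+M-1}\mu(A\cap T^{n}A)\geq \mu(B)/N$, which exceeds $\mu(A)^{2}$ as long as $N<1/(4\mu(B))$; in the large-$N$ regime the ergodic theorem forces $\frac{1}{N}\sum_{n=0}^{N-1}\mu(A\cap T^{n}A)\to\mu(A)^{2}$, so the Ces\`aro average returns to $\mu(A)^{2}$ while the initial positive excess coming from the in-tower boost is absorbed at rate $o(1)$.

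The main obstacle is the intermediate regime $N\sim h$, where the single-peak construction $A=B\cup T^{M}B$ has its Ces\`aro sum plateaued at $\mu(B)$ while $N\mu(A)^{2}$ continues to grow, so the average drops below $\mu(A)^{2}$ around $N\approx h/4$. To close this gap, I would enrich $A$ by a "background" family of levels whose in-tower autocorrelation closely tracks $|S|^{2}/h$, i.e.\ approximates $\mu(A)^{2}$ for generic shifts: concretely, take $S=S_{0}\cup\{i_{0},i_{0}+M\}$, where $S_{0}\subset\{0,\dots,h-1\}$ is a (quasi-random) subset chosen so that the number of pairs in $S_{0}^{2}$ at distance $n$ is close to $|S_{0}|^{2}/h$ for all $n\in[1,h-|S_{0}|_{\max}]$, while the extra pair $(i_{0},i_{0}+M)$ contributes a strict surplus exclusively at $n=M$. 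Then $\mu(A\cap T^{n}A)\approx\mu(A)^{2}$ for all other $n$ in the tower range, and $\frac{1}{N}\sum_{n=M}^{N+M-1}\mu(A\cap T^{n}A)\geq \mu(A)^{2}+c/N$ uniformly in $N\leq h-O(1)$, for some $c>0$ depending on the pair boost.

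For $N$ beyond the tower scale, the ergodic theorem applied to the tail $\sum_{n\geq h}\mu(A\cap T^{n}A)$ shows that the contributions of shifts leaving the tower average to $\mu(A)^{2}$, and the accumulated in-tower surplus $c$ continues to keep the Ces\`aro average strictly above $\mu(A)^{2}$, with all error terms $o(1/N)$. The quantitative step is to choose $h$ large enough compared with $M$, $\mu(A)$, and the rate of ergodic convergence, and the Rokhlin error $\delta$ small enough, so that the in-tower approximations and the ergodic-theorem tail estimates combine to give the strict inequality for all $N\in\mathbb{N}$ simultaneously; the hardest part is precisely making the pseudorandom/background estimate rigorous and uniform in $N$.
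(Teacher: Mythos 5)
Your overall instinct---use Rokhlin towers to create an artificial surplus in the correlations at small shifts, and let the ergodic theorem take over for large $N$---is the right starting point, but the single-tower construction has a fatal gap in the regime $N$ beyond the tower height. The ergodic theorem gives $\frac{1}{N}\sum_{n=0}^{N-1}\bigl(\mu(A\cap T^{n}A)-\mu(A)^{2}\bigr)\to 0$ with \emph{no rate whatsoever}; it tells you the partial sums $\sum_{n=h}^{N}\bigl(\mu(A\cap T^{n}A)-\mu(A)^{2}\bigr)$ are $o(N)$, not $o(1)$, and certainly not bounded below by $-c$. Your claim that ``all error terms are $o(1/N)$'' in the post-tower regime is therefore unjustified: once $n$ exceeds the tower height, you have no control at all over $\mu(A\cap T^{n}A)$, and the accumulated deficit over a long stretch of such $n$ can swamp any fixed surplus $c$ built into the tower (think of an irrational rotation, where the correlations oscillate between values near $\mu(A)$ and values well below $\mu(A)^{2}$ with excursions of unbounded total length). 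A secondary problem occurs already inside the tower: for any $S_{0}\subseteq\{0,\dots,h-1\}$ the total number of ordered pairs at positive distance is about $|S_{0}|^{2}/2$, whereas a ``flat'' autocorrelation $|S_{0}|^{2}/h$ at every shift $n\in[1,h)$ would require about $|S_{0}|^{2}$ pairs; so the in-tower autocorrelation must fall below its mean for shifts comparable to $h$, and the resulting deficit, summed over $N$ shifts, already erodes the surplus well before $N$ reaches $h-O(1)$.

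The paper avoids both problems by making the construction infinite and multiscale: $A=\bigcup_{i=1}^{\infty}A_{i}$ with $\mu(A_{i})=a/i(i+1)$, where each new piece $A_{k+1}$ is placed (via a Rokhlin tower of height much larger than $N_{k}$) so that $\mu((C_{k}\cup A_{k+1})\cap T^{n}A_{k+1})>(1-\e_{k})\mu(A_{k+1})$ for all $n$ up to the threshold $N_{k}$ beyond which the ergodic theorem already controls the averages for $C_{k}=\bigcup_{i\le k}A_{i}$. For $N\in[N_{k},N_{k+1})$ one then bounds the average below by $(1-\e_{k})\mu(C_{k})^{2}+\sum_{i\ge 2}(1-\e_{k+i})a_{k+i}$: the tail pieces contribute a surplus linear in $a$ (about $a/(k+2)$), which for $a$ small dominates the quadratic deficit $\mu(A)^{2}-\mu(C_{k})^{2}\approx 2a^{2}/(k+1)$. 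In other words, every scale of $N$ gets its own freshly engineered surplus from sets constructed at later stages; no single finite tower can do this for all $N$ simultaneously. To repair your argument you would need to replace the one tower by such an inductive sequence of towers of rapidly growing heights.
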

\begin{prop}[Nonexistence of Ces\`aro UI set for $M=0$]\label{prop:e0}
	Let $(X,\mathcal{B},\mu,T)$ be an invertible probability  measure preserving system. For any measurable set $A$ satisfying $\mu(A) \in (0,1)$, 
	there exist infinitely many $N \in \N$ such that 
	\[
	\frac{1}{N} \sum_{n=0}^{N-1} \mu ( A \cap T^{n}A) > \mu(A)^2 . 
	\]
\end{prop}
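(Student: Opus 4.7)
Set $b_N := \sum_{n=0}^{N-1} \mu(A \cap T^n A) - N\mu(A)^2$, so the conclusion becomes the assertion that $b_N > 0$ for infinitely many $N \in \N$. My plan is to prove a linear lower bound of the form $\sum_{N=1}^{M} b_N \geq cM$ with $c = \tfrac{1}{2}\mu(A)(1-\mu(A)) > 0$. If, on the contrary, $b_N \leq 0$ held for all $N \geq N_0$, then the partial sums $\sum_{N=1}^{M} b_N$ would be eventually non-increasing and thus bounded above, which is incompatible with linear growth.

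To establish the lower bound, first interchange summation:
\[
\sum_{N=1}^{M} \sum_{n=0}^{N-1} \mu(A \cap T^n A) = \sum_{n=0}^{M-1}(M-n)\,\mu(A \cap T^n A).
\]
Setting $S_M := \sum_{n=0}^{M-1} 1_A \circ T^n$, expanding $\int S_M^2 \, d\mu$, and using $T$-invariance of $\mu$ (so $\mu(A \cap T^{-k}A) = \mu(A \cap T^k A)$) folds the two-sided sum back onto $k \geq 0$ to give
\[
\int S_M^2 \, d\mu = M\mu(A) + 2\sum_{n=1}^{M-1}(M-n)\,\mu(A \cap T^n A),
\]
that is, $\sum_{n=0}^{M-1}(M-n)\mu(A \cap T^n A) = \tfrac{1}{2}\bigl(\int S_M^2 \, d\mu + M\mu(A)\bigr)$. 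Combining this with the elementary identity $\sum_{N=1}^{M} N\mu(A)^2 = \tfrac{1}{2}M(M+1)\mu(A)^2$ yields
\[
\sum_{N=1}^{M} b_N = \tfrac{1}{2}\int S_M^2 \, d\mu + \tfrac{1}{2} M\mu(A) - \tfrac{1}{2} M(M+1)\mu(A)^2.
\]
Cauchy--Schwarz then supplies $\int S_M^2 \, d\mu \geq \bigl(\int S_M \, d\mu\bigr)^2 = M^2 \mu(A)^2$, and substituting and simplifying delivers
\[
\sum_{N=1}^{M} b_N \geq \tfrac{M\mu(A)(1-\mu(A))}{2},
\]
which is the desired linear lower bound, since $\mu(A)\in(0,1)$.

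The hard part is recognizing the identity that rewrites the triangularly weighted sum $\sum_{n=0}^{M-1}(M-n)\mu(A \cap T^n A)$ in terms of $\int S_M^2 \, d\mu$---essentially a Fej\'er-kernel manipulation of the correlation sequence. Once that is in place, Cauchy--Schwarz against the constant function $1$ closes the argument.
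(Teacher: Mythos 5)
Your proof is correct and rests on the same two ingredients as the paper's: the Cauchy--Schwarz bound $\int S_M^2\,d\mu \geq (M\mu(A))^2$ and the fact that the diagonal term contributes $\mu(A)$ rather than $\mu(A)^2$, which is exactly the source of the gain $\tfrac12 M\mu(A)(1-\mu(A))$. The only difference is organizational: you sum the deficits $b_N$ to get a clean linear lower bound, whereas the paper argues by contradiction and bounds $\tfrac{1}{N^2}\int S_N^2\,d\mu$ from above directly.
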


%TMA switched to M > 0.
We remark that Question \ref{ques:7} (i) for $M > 0$ remains open.
%TMA remove para indent.
We prove Theorem \ref{thm:e0} in Section \ref{sec:2} and Theorem \ref{prop:e0} in Section \ref{sec:3}.

\subsection{Over-independence for mixing of higher orders}
%    	The methods of Theorems \ref{thm:wm} and \ref{thm:e0} for constructing OI sets apply to any limit formula pertaining to (\ref{intro:1}), (\ref{intro:2}) and (\ref{intro:3}). 
%    	
%    	\textbf{Example 1:}
The mentioned above prevalence of over-independence manifests itself in a variety of additional situations.
We say that an invertible probability measure preserving system $(X,\mathcal{B},\mu,T)$ is \emph{mixing of order $d+1$} if
    	for all $A\in\mathcal{B}$, 	all integer sequences $(c_{i,n})_{n\in\mathbb{Z}},1\leq i\leq d$ such that $\lim_{\vert n\vert\to\infty}\vert c_{i,n}\vert=\lim_{\vert n\vert\to\infty}\vert c_{i,n}-c_{j,n}\vert=\infty$ for all $1\leq i < j\leq d$, 
%TMA edited previous expression.
we have
    	\begin{equation}\nonumber
    	\begin{split}
    	\lim_{\vert n\vert\to\infty}\mu(A\cap T^{c_{1,n}}A\cap \dots\cap T^{c_{d,n}}A )=\mu(A)^{d+1}.
    	\end{split}
    	\end{equation}

    	Methods similar to those used in the proofs of Theorems \ref{dense-one-under} Part (ii) and \ref{thm:e0} allow us to establish the following theorem:
    	\begin{thm}\label{thm:hm} Let $(X,\mathcal{B},\mu,T)$ be an invertible order-$(d+1)$ mixing probability measure preserving system. Let $(c_{i,n})_{n\in\mathbb{Z}}, 1\leq i\leq d$ be integer sequences such that $\lim_{\vert n\vert\to\infty}\vert c_{i,n}\vert=\lim_{\vert n\vert\to\infty} \vert c_{j,n}-c_{i,n}\vert=\infty$ for all $1\leq i<j\leq d$. Then there is $A\in\mathcal{B}$ such that 
    		$$\mu(A\cap T^{c_{1,n}}A\cap\dots\cap T^{c_{d,n}}A)>\mu(A)^{d+1}$$
    		for all $n\in\mathbb{Z}$.
    	\end{thm}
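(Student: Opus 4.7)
The plan is to mimic the perturbation strategy used in the proofs of Theorems \ref{dense-one-under}(ii) and \ref{thm:e0}: start with a base set whose multi-correlation is already known to be asymptotically $\mu(\cdot)^{d+1}$ by mixing, and then install a small adjustment that turns the strict inequality on for every $n$ simultaneously.

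First I would fix any $B\in\mathcal{B}$ with $p:=\mu(B)\in(0,1)$. The hypotheses on $(c_{i,n})$ together with order-$(d+1)$ mixing yield
$$\mu(B\cap T^{c_{1,n}}B\cap\cdots\cap T^{c_{d,n}}B)\longrightarrow p^{d+1}\quad\text{as }|n|\to\infty,$$
so given $\eta>0$ there exists $N_0\in\mathbb{N}$ such that the multi-correlation of $B$ at any $n$ with $|n|>N_0$ lies within $\eta$ of $p^{d+1}$. The problem then splits into a finite regime (only finitely many $n$ with $|n|\le N_0$, together with the trivial index $n=0$ which reduces to $\mu(A)>\mu(A)^{d+1}$) and an asymptotic regime in which the quantity $\mu(A\cap T^{c_{1,n}}A\cap\cdots\cap T^{c_{d,n}}A)-\mu(A)^{d+1}$ is $o(1)$ and whose sign is what we must control.

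Next I would construct $A$ as a small disjoint perturbation $A=B\cup E$. Writing $f:=\chi_A-\mu(A)$ and setting $c_{0,n}:=0$, the polarization identity
$$\mu(A\cap T^{c_{1,n}}A\cap\cdots\cap T^{c_{d,n}}A)-\mu(A)^{d+1}=\sum_{\substack{S\subseteq\{0,1,\ldots,d\}\\|S|\ge 2}}\mu(A)^{d+1-|S|}\int\prod_{i\in S}f\circ T^{c_{i,n}}\,d\mu$$
is the expression to keep positive for every $n\in\mathbb{Z}$. I would take $E$ to be a finite union of levels of a Rokhlin tower tall enough to accommodate all translations $T^{c_{i,n}}$ with $|n|\le N_0$, and I would arrange the chosen levels so that the required combinatorial overlaps force the right-hand side to be strictly positive for these finitely many $n$. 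For $|n|>N_0$, order-$(d+1)$ mixing sends every integral in the sum to $0$; here one secures the correct sign by calibrating $E$ so that the dominant contribution in the expansion (typically the $|S|=2$ pairwise term $\mu(A)^{d-1}\sum_{i<j}(\mu(A\cap T^{c_{j,n}-c_{i,n}}A)-\mu(A)^2)$) remains strictly positive, which can be arranged by building the construction on top of a $d=1$ OI set supplied by \cite{Adams}.

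The main obstacle is coordinating the two regimes through a single perturbation $E$: the finite regime demands a decisive combinatorial gain, while in the asymptotic regime the perturbation must not flip the sign of an already $o(1)$ quantity. This is resolved by fixing scales in the right order --- first the measure of $E$, then $N_0$ and the mixing rate $\eta$, and only then the tower's height and the specific choice of levels --- exactly the scale-matching employed in Section \ref{sec:2}.
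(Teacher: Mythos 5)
There is a genuine gap in the asymptotic regime, and it is the heart of the matter. Once you fix a single perturbation $E$ and hence a single set $A=B\cup E$, order-$(d+1)$ mixing tells you only that $\mu(A\cap T^{c_{1,n}}A\cap\cdots\cap T^{c_{d,n}}A)-\mu(A)^{d+1}\to 0$ as $|n|\to\infty$; it gives no rate and no sign. Your plan to control the sign by making the $|S|=2$ pairwise terms in the polarization identity dominate cannot be carried out: all the terms $\int\prod_{i\in S}f\circ T^{c_{i,n}}\,d\mu$ with $|S|\ge 2$ tend to $0$ at rates that mixing does not quantify, so there is no basis for asserting that the pairwise block beats the $|S|\ge 3$ blocks, and positivity of $\mu(A\cap T^{c_{j,n}-c_{i,n}}A)-\mu(A)^2$ for all pairs (even granting an OI set from \cite{Adams}, which is a statement about a different set, not about your $A=B\cup E$) does not force the full $(d+1)$-fold correlation above $\mu(A)^{d+1}$. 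A single finite perturbation simply runs out of influence for large $|n|$.

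The paper's proof avoids this by never stopping the perturbation: it builds an infinite sequence of pairwise disjoint sets $A_1,A_2,\dots$ with $\mu(A_i)=a/(i(i+1))$ and an increasing sequence $N_1<N_2<\cdots$, where $A_{k+1}$ is produced by a Rohlin-tower lemma (Lemma \ref{roh2}) so that $\mu((C_k\cup A_{k+1})\cap T^{c_{1,n}}A_{k+1}\cap\cdots\cap T^{c_{d,n}}A_{k+1})>(1-\e_k)\mu(A_{k+1})$ for all $|n|<N_k$, while $N_{k+1}$ is chosen so large that $C_{k+1}=\bigcup_{i\le k+1}A_i$ is already $\e_{k+1}$-independent of itself for $|n|>N_{k+1}$. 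Then for $N_k\le|n|<N_{k+1}$ the correlation of $A=\bigcup_i A_i$ is bounded below by the nearly independent contribution of $C_k$, about $(a-\frac{a}{k+1})^{d+1}$, plus the surplus $\sum_{i\ge k+2}(1-\e_i)a_i\approx\frac{a}{k+2}$ coming from the pieces still inside their tower regime; since $a$ is small, $\frac{a}{k+2}$ dominates the deficit $O\bigl(\frac{a^{d+1}}{k+1}\bigr)$. That ever-renewed surplus at every scale is precisely what your two-regime, single-perturbation scheme is missing, and it is the idea you would need to add to make the argument work.
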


The proof of this theorem is given in Section \ref{sec:2}.
    	
%    \textbf{Example 2:}
%    	Let $(X,\mathcal{B},\mu,T_{1},\dots,T_{d})$ be a free measure preserving system with commuting transformations and $p_{1},\dots,p_{d}\in\mathbb{Z}[x]$. Suppose that $T^{p_{i}(n)}_{i}$ and $T^{p_{i}(n)}_{i}T^{-p_{j}(n)}_{j}, 1\leq i,j\leq d, i\neq j$ are weakly mixing (meaning...). By ???, for all $A\in\mathcal{B}$, there exists a set $E\subseteq\mathbb{N}$ with $d(E)=1$ such that
%    	\begin{equation}\nonumber
%    	\begin{split}
%    	\lim_{n\to\infty,n\in E}\mu(A\cap T^{p_{1}(n)}_{1}A\cap\dots\cap T^{p_{d}(n)}_{d}A)=\mu(A)^{d+1}.
%    	\end{split}
%    	\end{equation}
%    	 Using the methods similar to Theorems \ref{thm:wm} and \ref{thm:e0} (and the existence of Rohlin tower for $\mathbb{Z}^{d}$-actions), one can show that there exists $A\in\mathcal{B}$ such that the set of $n\in\mathbb{N}$ such that
%    	$$\mu(A\cap T^{p_{1}(n)}_{1}A\cap\dots\cap T^{p_{d}(n)}_{d}A)>\mu(A)^{d+1}$$
%    	is of density 1. Since the method is similar to Theorems \ref{thm:wm}, we will not prove this result in the paper and leave it to interested readers. 
%	
%       \textbf{Example 3:} since the notion of mixing, weak mixing and ergodic generalizes to amenable group actions (see Section \ref{sec:am} for the definition), with limit formulas pertaining to (\ref{intro:1}), (\ref{intro:2}) and (\ref{intro:3}). We also have analogs results for the existence for OI sets for systems with amenable group actions in different settings such as Theorems \ref{thm:ma},  \ref{thm:wa} and \ref{thm:ea}, which we will prove in Section \ref{sec:am}. 

\subsection{Over- and under-independence for action of amenable groups}
The definitions of ergodicity, weak mixing and mixing given at the beginning of the introduction can be naturally extended to the setup of amenable group actions. We deal with amenable group actions in Section \ref{sec:am}, where we show that any mixing measure preserving action of an amenable group has an OI set, and also formulate results which are analogous to Theorems \ref{dense-one-under} Part (ii) and \ref{thm:e0}.

%
%       \begin{rem}
%       	We caution the reader that the construction in  Theorems \ref{thm:wm} and \ref{thm:e0} can not be applied to the amenable case directly as there is no Rohlin tower for amenable actions. To overcome this difficulty, we will need to use the "almost Rohlin tower" (i.e. a structure similar to Rohlin tower but allowing small intersections) developed in \cite{OW} instead to construct OI sets.
%       \end{rem}
%
%
%	We conclude the introduction by remarking that the phenomena for under-independence is more subtle than over-independence. While there is a general strategy to construct OI sets in various settings mentioned above, there is no universal way (as far as the authors know) to study UI sets. For example, the following question is widely open and believed to be more difficult than their OI counterpart:	
%	\begin{ques}
%		Is there an UI set for all systems in the settings of Examples 1, 2 and 3?
%	\end{ques} 
%
\subsection{Organization of the paper} We prove the over-independence results (i.e. Part (ii) of Theorem \ref{dense-one-under}, \ref{thm:e0} and \ref{thm:hm}) in Section \ref{sec:2}, and the under-independence results (i.e. Part (i) of Theorem \ref{dense-one-under} and Proposition \ref{prop:e0}) in Section \ref{sec:3}. In Section \ref{sec:4}, we present the analogue of Part (ii) of Theorem \ref{dense-one-under} for relatively weakly mixing extensions. %In Section \ref{sec:5}
%we collect some miscellaneous results pertaining to the OI and UI topic. 
Finally, we deal with amenable group actions in Section \ref{sec:am}.

\section{Existence of over-independence sets}\label{sec:2}

%
% follows from a simple variation of the method in \cite{Adams}, which is illustrated in the following subsections. One can also ask "inter-category" over-independence problems (such as does every weakly mixing system has a set $A$ with $\mu(T^{N}A\cap A)>\mu(A)^{2}$
%for all $N\in\mathbb{N}$?). These questions will be more difficult.

We prove Theorem \ref{dense-one-under} Part (ii), \ref{thm:e0} and \ref{thm:hm} in this section. 

%Let $(X,\mathcal{B},\mu)$ be  a probability space.
%We say that $A\in\mathcal{B}$ is an \emph{atom} of $\mu$ if $\mu(A)>0$, and for all $F\in\mathcal{B}, F\subseteq A$, either $\mu(F)=\mu(A)$ or $\mu(F)=0$.
%We say that $\mu$ is \emph{atomless} if there is no $A\in\mathcal{B}$ which is an atom of $\mu$. 
%
%It is not hard to see that for every mixing or weakly mixing system $(X,\mathcal{B},\mu,T)$, $\mu$ is atomless. Moreover, if $(X,\mathcal{B},\mu,T)$ is an ergodic system, then $\mu$ is atomless if and only if $T$ is free.

\begin{lem}\label{roh2}
	Let $(X,\mathcal{B},\mu,T)$ be a free, invertible, ergodic  probability measure preserving system. For every $C\in\mathcal{B}, N\in\N,\e>0$, $0<a<1-\mu(C)$, and every $c_{i,n}\in\mathbb{Z}, 1\leq i\leq d, \vert n\vert\leq N$, there exists $A\in\mathcal{B}$ such that $\mu(A)=a, A\cap C=\emptyset,$ and
	$$\mu((C\cup A)\cap T^{c_{1,n}}A\cap\dots\cap T^{c_{d,n}}A)>(1-\e)\mu(A)$$
	for all $\vert n\vert<N$. 
\end{lem}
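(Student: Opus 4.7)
My plan is to apply the Rokhlin--Halmos lemma and construct $A$ as a ``slab minus $C$'' in a very tall tower for $T$. Set $M := \max\{|c_{i,n}| : 1 \le i \le d,\ |n| \le N\}$, fix a small $\delta > 0$, and take a height $H \in \N$ large enough that $M/H$ is negligible compared to $\e a$. Since $T$ is free and ergodic, Rokhlin's lemma provides a measurable set $B$ with $B, TB, \dots, T^{H-1}B$ pairwise disjoint and $\mu\bigl(X \setminus \bigsqcup_{j=0}^{H-1} T^j B\bigr) < \delta$. I choose a slab $S := \{j_0, j_0+1, \dots, j_0+L-1\}$ entirely inside $\{M, \dots, H-M-1\}$, set $A_0 := \bigsqcup_{j \in S} T^j B$, and define $A := A_0 \setminus C$. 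Adjusting $L$ (and shaving a small boundary piece off $B$ if needed) I can arrange $\mu(A) = a$ exactly, since this measure varies continuously from $0$ up to essentially $1-\mu(C)$ as $L$ grows.

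The overlap condition follows from two tower estimates. First, the geometric containment
\[
\bigcap_{i=1}^{d} T^{c_i} A \setminus (C \cup A) \;\subseteq\; \bigcap_{i=1}^{d} T^{c_i} A_0 \setminus A_0
\]
holds because $A \subseteq A_0$ and $C \cup A = C \cup A_0$; the right-hand side is a union of those tower levels $j \in \bigcap_i(S+c_i)$ lying outside $S$, of which there are at most $M$, so its measure is at most $M\mu(B) \le M/H$. This yields $\mu\bigl((C\cup A) \cap \bigcap_i T^{c_i} A\bigr) \ge \mu\bigl(\bigcap_i T^{c_i} A\bigr) - M/H$. Second, writing $\bigcap_i T^{c_i} A = \bigcap_i T^{c_i} A_0 \setminus \bigcup_i T^{c_i} C$, the first factor has measure at least $(L-2M)\mu(B)$, while the overlap with $\bigcup_i T^{c_i} C$ is bounded by $\sum_i \mu\bigl(T^{c_i} A_0 \cap T^{c_i} C\bigr) = d\,\mu(A_0 \cap C)$ via the inclusion $\bigcap_j T^{c_j} A_0 \cap T^{c_i} C \subseteq T^{c_i}(A_0 \cap C)$.

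The main obstacle I anticipate is controlling $\mu(A_0 \cap C)$: for an arbitrary slab, Birkhoff's theorem gives $\mu(A_0 \cap C)/\mu(A_0) \approx \mu(C)$, which is not small when $\mu(C)$ is not small. The remedy is to refine the Rokhlin base $B$ according to the $C$-signature $\sigma(x) = (\chi_C(T^j x))_{j=0}^{H-1}$ of each column and to choose the slab position $j_0(\sigma)$ per column so as to minimize the $C$-content of the corresponding slab; equivalently, when feasible, one positions the base so that $B \subseteq X \setminus \bigcup_{j \in S} T^{-j} C$, which works whenever $L$ is not too large relative to $1/\mu(C)$. Once $\mu(A_0 \cap C)$ is made sufficiently small (say, below $\e a/(2d)$), combining the two estimates above yields
\[
\mu\bigl((C \cup A) \cap \bigcap_i T^{c_i} A\bigr) \;>\; (1-\e)\,\mu(A)
\]
for every $|n| < N$, completing the proof.
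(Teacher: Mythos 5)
Your construction is essentially the paper's: build a Rohlin tower and take a block of consecutive levels with $C$ removed (the paper uses the \emph{entire} tower over a sub-base $I$ of the base rather than a slab, which makes the containment of the whole column in $C\cup A$ automatic, but this is cosmetic). Your two estimates are correct, and combining them gives
$\mu\bigl((C\cup A)\cap\bigcap_i T^{c_i}A\bigr)\ \ge\ \mu(A)-(d-1)\mu(A_0\cap C)-2M\mu(B)-M/H$,
so, exactly as you flag, everything reduces to making $(d-1)\mu(A_0\cap C)$ small compared with $\e\,\mu(A)$; for $d=1$ that term has coefficient $0$ and your argument already closes. The proposed remedy for $d\ge 2$, however, does not work. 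Since $\mu(A)=a$ forces $L\mu(B)\ge a$ while the error terms force $\mu(B)\le 1/H\ll \e a/M$, you need $L\gg M/\e\ge 2$; but a run of $L\ge 2$ consecutive levels avoiding $C$ need not exist in \emph{any} column, however you refine the base. If $TC=X\setminus C$ a.e., every pair of consecutive levels meets $C$ in exactly half its measure, so $\mu(A_0\cap C)=\mu(A_0)/2$ for every slab of length $\ge 2$ and no repositioning helps.

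This is not a removable technicality: for $d\ge 2$ the lemma as stated is false. Take $X=\mathbb{T}\times\{0,1\}$ with $T(y,j)=(y+\alpha,\,j+1\bmod 2)$, $\alpha$ irrational (a free, invertible, ergodic system), $C=\mathbb{T}\times\{0\}$, $d=2$, $c_{1,n}\equiv 0$, $c_{2,n}\equiv 1$. Any $A$ with $A\cap C=\emptyset$ lies in $\mathbb{T}\times\{1\}$, so $A\cap TA=\emptyset$ and the left-hand side of the conclusion vanishes, while the right-hand side is $(1-\e)a>0$. The paper's own one-line estimate, $\mu((C\cup A)\cap\bigcap_i T^{c_{i,n}}A)>\mu(A)-\sum_i|c_{i,n}|/M^2$, silently drops exactly the term you are worrying about: it accounts for the levels shifted out of the column but not for the loss of order $(d-1)\mu(C\cap\text{column})$ caused by the holes $C$ punches in $A$, and it is valid only for $d=1$. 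So the obstacle you identified is genuine, but it is a defect of the statement for $d\ge 2$ (and of the paper's proof), not merely of your write-up; any repair must add hypotheses --- e.g.\ $d=1$, or $\mu(C)$ small relative to $\e a$, or separation assumptions on the $c_{i,n}$ together with mixing of the system.
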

\begin{proof}

	Let $M\in\mathbb{N}$ be such that $$M>\max\{\frac{1}{\epsilon a},d\max_{1\leq i\leq d,\vert n\vert\leq N}\vert c_{i,n}\vert\}.$$ Let $B$ be the base of a Rohlin Tower of height $M^{2}$ such that 
	$$\mu(\bigcup_{i=0}^{M^{2}-1}T^{i}B)>1-\epsilon.$$ 
	Choose a subset $I\subseteq B_{1}$ such that the set
	$$A=\{T^{i}x\colon 0\leq i<M^{2}, x\in B, T^{i}x\notin C\}$$
	has the property $\mu(A)=a$ (this can be achieved since $X$ is ergodic and free and thus atomless). Obviously $A\cap C=\emptyset$. Moreover, for all $\vert n\vert<N$, we have that
	$$\mu((C\cup A)\cap T^{c_{1,n}}A\cap\dots\cap T^{c_{d,n}}A)>\mu(A)-\frac{\sum_{i=1}^{d}\vert c_{i,n}\vert}{M^{2}}>\mu(A)-\frac{1}{M}>(1-\e)\mu(A).$$
\end{proof}

\begin{proof}[Proof of Theorem \ref{thm:hm}]
	The proof is similar to that of Theorem 3.1 in \cite{Adams}.
	
	 Let $0<a_{i},\e_{i}<1, i\in\mathbb{N}$ to be chosen later. Since $T$ is order-$d$ mixing, for every $B\in\mathcal{B}$, we have
	 $$\lim_{\vert n\vert\to\infty}\vert\mu(B\cap T^{c_{1,n}}B\cap\dots\cap T^{c_{d,n}}B)-\mu(B)^{d+1}\vert=0.$$
	 Let $A_{1}$ be an arbitrary set with $\mu(A_{1})=a_{1}$. 
	 There exists $N_{1}\in\N$ such that 
	 $$\vert\mu(A_{1}\cap T^{c_{1,n}}A_{1}\cap\dots\cap T^{c_{d,n}}A_{1})-\mu(A_{1})^{d+1}\vert<\e_{1}\mu(A_{1})^{d+1}$$
	 for all $\vert n\vert>N_{1}$.
	 
	 Suppose $A_{i}, N_{i}$ are chosen for all $i\leq k$. Denote $C_{j}=\bigcup_{i=1}^{j}A_{i}$. Let $A_{k+1}$ be
	 such that $\mu(A_{k+1})=a_{k+1}, A_{k+1}\cap C_{k}=\emptyset,$ and
	 $$\mu((C_{k}\cup A_{k+1})\cap T^{c_{1,n}}A_{k+1}\cap\dots\cap T^{c_{d,n}}A_{k+1})>(1-\e_{k})\mu(A_{k+1})$$
	 for all $\vert n\vert<N_{k}$. Since every mixing system is free and ergodic, the existence of $A_{k+1}$ is guaranteed by Lemma \ref{roh2} if $0<\sum_{i=1}^{\infty}a_{i}<1$. Let $N_{k+1}>N_{k}$ be such that 
	 $$\vert\mu(C_{k+1}\cap T^{c_{1,n}}C_{k+1}\cap\dots\cap T^{c_{d,n}}C_{k+1})-\mu(C_{k+1})^{d+1}\vert<\e_{k+1}\mu(C_{k+1})^{d+1}$$
	 for all $\vert n\vert>N_{k+1}$.
	 
	 Set $a_{i}=\frac{a}{i(i+1)}$, with $a$ sufficiently small.
	 We claim that $A=\bigcup_{i=1}^{\infty}A_{i}$ satisfies the condition of the theorem. If $\vert n\vert<N_{1}$, then
	  \begin{equation}\nonumber
	  \begin{split}
	  &\quad\mu(A\cap T^{c_{1,n}}A\cap\dots\cap T^{c_{d,n}}A)
	  \geq \sum_{k=2}^{\infty}\mu(A\cap T^{c_{1,n}}A_{k}\cap\dots\cap T^{c_{d,n}}A_{k})
	  \\&\geq \sum_{k=2}^{\infty}(1-\epsilon_{1})\mu(A_{k})=(1-\epsilon_{1})a/2>a^{d+1}=\mu(A)^{d+1},
	  \end{split}
	  \end{equation}
	  provided that $a$ is sufficiently small and $\e_{1}<1/2$.
	  Now suppose that $N_{k}\leq \vert n\vert<N_{k+1}$ for some $k\geq 1$. Then
	 \begin{equation}\nonumber
	 	\begin{split}
	 		&\quad\mu(A\cap T^{c_{1,n}}A\cap\dots\cap T^{c_{d,n}}A)
	 		\\&\geq \mu(C_{k}\cap T^{c_{1,n}}C_{k}\cap\dots\cap T^{c_{d,n}}C_{k})+\sum_{i=2}^{\infty}\mu(C_{k+i}\cap T^{c_{1,n}}A_{k+i}\cap\dots\cap T^{c_{d,n}}A_{k+i})
	 		\\&>(1-\e_{k})\mu(C_{k})^{d+1}+\sum_{i=2}^{\infty}(1-\e_{k+i})\mu(A_{k+i})
	 		\\&=(1-\e_{k})(a_{1}+\dots+a_{k})^{d+1}+\sum_{i=2}^{\infty}(1-\e_{k+i})a_{k+i}.
	 	\end{split}
	 \end{equation}
	 If $\e_{i}$ decreasing to 0 sufficiently fast, then 
	 \begin{equation}\nonumber
	 	\begin{split}
	 		&\quad(1-\e_{k})(a_{1}+\dots+a_{k})^{d+1}+\sum_{i=2}^{\infty}(1-\e_{k+i})a_{k+i}
	 		\\&>(1-\e_{k})((a-\frac{a}{k+1})^{d+1}+\frac{a}{k+2})>a^{d+1}=\mu(A)^{d+1}.
	 	\end{split}
	 \end{equation}  
\end{proof}

%TMA added colon on next line.
As an immediate corollary, we have:

\begin{cor} Let $(X,\mathcal{B},\mu,T)$ be an invertible order-$d$ mixing probability measure preserving system. Then there is $A\in\mathcal{B}$ such that 
	$$\mu(A\cap T^{n}A\cap\dots\cap T^{dn}A)>\mu(A)^{d+1}$$
	for all $n\in\mathbb{Z}$.
\end{cor}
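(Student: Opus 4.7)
The plan is simply to specialize Theorem \ref{thm:hm} to the arithmetic-progression sequences $c_{i,n} = in$ for $1 \le i \le d$. The key point is that the corollary is flagged as an \emph{immediate} consequence, so the only work is to verify that the hypotheses of Theorem \ref{thm:hm} are met by this choice.

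First I would check the two growth conditions. We need $\lim_{|n|\to\infty}|c_{i,n}| = \infty$ and $\lim_{|n|\to\infty}|c_{j,n}-c_{i,n}| = \infty$ for all $1 \le i < j \le d$. With $c_{i,n} = in$ these become $\lim_{|n|\to\infty}|in|=\infty$ and $\lim_{|n|\to\infty}|(j-i)n|=\infty$, both of which hold because $i$ and $j-i$ are nonzero integers. The mixing assumption in the statement (order-$(d+1)$ mixing, matching the $d+1$ factors $A, T^{n}A, T^{2n}A,\dots,T^{dn}A$) coincides with the mixing hypothesis of Theorem \ref{thm:hm}.

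With the hypotheses verified, Theorem \ref{thm:hm} applied to $(c_{1,n},\dots,c_{d,n}) = (n,2n,\dots,dn)$ produces a set $A \in \mathcal{B}$ with
\[
\mu(A \cap T^{n}A \cap T^{2n}A \cap \cdots \cap T^{dn}A) > \mu(A)^{d+1}
\]
for all $n \in \mathbb{Z}$, which is exactly the desired conclusion. Since the corollary is a direct substitution, there is no genuine obstacle in the proof; the substantive work was already carried out in the construction of $A$ as a nested union $\bigcup_{i\ge 1}A_i$ via Lemma \ref{roh2} inside the proof of Theorem \ref{thm:hm}.
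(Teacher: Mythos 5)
Your proposal is correct and is exactly the paper's (implicit) argument: the corollary is obtained by specializing Theorem \ref{thm:hm} to $c_{i,n}=in$, and your verification of the growth conditions $|in|\to\infty$ and $|(j-i)n|\to\infty$ is all that is required. You also rightly note that the relevant hypothesis is mixing of order $d+1$ (the paper's wording fluctuates between ``order-$d$'' and ``order-$(d+1)$'', but the intended hypothesis matches the $d+1$ intersected sets).
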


%\begin{lem}\label{roh}
%    Let $(X,\mathcal{B},\mu,T)$ be an invertible measure preserving system. For every $C\in\mathcal{B}, N\in\N,\e>0$ and $0<a<1-\mu(C)$, there exists $A\in\mathcal{B}$ such that $\mu(A)=a, A\cap C=\emptyset,$ and
%    $$\mu((C\cup A)\cap T^{p_{1}(n)}A\cap\dots\cap T^{p_{d}(n)}A)>(1-\e)\mu(A)$$
%    for all $\vert n\vert<N$. In this case, we say $A$ is \emph{$(C,a,\e,N)$-good}.
%\end{lem}
%\begin{proof}
%Denote $L=\sum_{i=1}^{d}\vert p_{i}(N)\vert$. Let $M>\lceil\frac{1}{\e}\rceil$ and "most points have a good ergodic average for $C$ of length $ML$". Let $B$ be the base of a Rohlin tower of height $ML$ such that
%$$\mu(\bigcup_{i=0}^{ML-1}T^{i}B)>1-\e.$$
%Let $I\subseteq B$ be such that the set
%$$A=\{T^{i}x\colon x\in I, 0\leq i<ML, T^{i}x\notin C\}$$
%is of measure $a$. It is easy to verify such a set satisfies the condition.
%\end{proof}

%We give the following lemma which is used to establish over/under-independence results for weakly mixing systems. 
The following lemma is straightforward:
\begin{lem}\label{lem:wmbh}
	Let $(X,\mathcal{B},\mu,T)$ be an invertible weakly mixing probability measure preserving system. Let $p_{1},p_{2},\dots,p_{d}$ be non-constant integer-valued polynomials such that $p_{i}-p_{j}$ are also non-constant for all $i\neq j$. Given a measurable set $C$ and $\epsilon > 0$, 
	there exists $N \in \N$ such that for any $n \geq N$ and measurable set $B$, 
	\[
	| \{ i\in \Z : |  i | \leq n, | \mu(B \cap T^{p_{1}(i)} C\cap \dots\cap T^{p_{d}(i)} C) - \mu(B) \mu(C)^{d} | \geq \epsilon \} | < \epsilon n .
	\]
\end{lem}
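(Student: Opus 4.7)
The plan is to reduce the claim, via Markov's inequality, to showing that a certain mean-square average tends to zero uniformly in $B$, and then to handle that mean-square by passing to the product system $T\times T$. Set $E_{i} := T^{p_{1}(i)}C \cap \dots \cap T^{p_{d}(i)}C$, $F_{i} := \mathbf{1}_{E_{i}}$, $c := \mu(C)^{d}$, and
$$a_{i}(B) := \mu(B \cap E_{i}) - \mu(B)c = \int \mathbf{1}_{B}\,(F_{i} - c)\, d\mu,$$
so $|a_{i}(B)| \leq 1$. By Markov's inequality, to prove the lemma it suffices to show that
$$\frac{1}{2n+1}\sum_{|i|\leq n} a_{i}(B)^{2} \longrightarrow 0 \quad \text{as } n \to \infty,\text{ uniformly in } B \in \mathcal{B}.$$

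Expanding the square and applying Fubini yields
$$\frac{1}{2n+1}\sum_{|i| \leq n} a_{i}(B)^{2} = \iint \mathbf{1}_{B}(x)\,\mathbf{1}_{B}(y)\, G_{n}(x,y)\, d\mu(x)\, d\mu(y),$$
where
$$G_{n}(x,y) := \frac{1}{2n+1}\sum_{|i|\leq n} (F_{i}(x) - c)(F_{i}(y) - c).$$
Since $0 \leq \mathbf{1}_{B} \leq 1$, the right-hand side is bounded above by $\|G_{n}\|_{L^{1}(\mu\times\mu)}$, which does not depend on $B$; thus the task reduces to proving $\|G_{n}\|_{L^{1}(\mu\times\mu)} \to 0$.

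I would do this by expanding $G_{n}$ into its four natural pieces and treating them separately. For the single-variable averages $\frac{1}{2n+1}\sum F_{i}(x)$ and $\frac{1}{2n+1}\sum F_{i}(y)$, the $L^{2}$ form of the polynomial weak mixing theorem (the functional strengthening of Theorem \ref{thm:BP}, due to Bergelson) applied to $T$, the function $\mathbf{1}_{C}$, and the polynomials $p_{1},\dots,p_{d}$ gives convergence to $c$ in $L^{2}(\mu)$. For the two-variable term I would exploit the factorization
$$F_{i}(x)F_{i}(y) = \prod_{j=1}^{d} \mathbf{1}_{(T\times T)^{p_{j}(i)}(C\times C)}(x,y);$$
since $T$ weakly mixing implies $T \times T$ weakly mixing and the hypotheses on $p_{1},\dots,p_{d}$ transfer verbatim to the product system, the same polynomial $L^{2}$ theorem applied in $T \times T$ to $\mathbf{1}_{C\times C}$ yields that $\frac{1}{2n+1}\sum F_{i}(x)F_{i}(y)$ converges to $(\mu\times\mu)(C\times C)^{d} = c^{2}$ in $L^{2}(\mu\times\mu)$. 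Summing, $G_{n} \to c^{2} - c^{2} - c^{2} + c^{2} = 0$ in $L^{2}(\mu\times\mu)$, and a fortiori in $L^{1}$.

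The main point requiring care is the passage from the scalar density-$1$ statement of Theorem \ref{thm:BP} to the $L^{2}$ polynomial ergodic theorem for weakly mixing systems, and in particular its application in the product system $T \times T$; this strengthening is standard and is contained in Bergelson's original proof, so no new work is needed. The final Chebyshev step is routine: given $\varepsilon > 0$, choose $N$ so that $\frac{1}{2n+1}\sum_{|i|\leq n} a_{i}(B)^{2} < \varepsilon^{3}/3$ for every $n \geq N$ and every $B \in \mathcal{B}$, and conclude $|\{i: |i|\leq n,\,|a_{i}(B)|\geq \varepsilon\}| \leq \varepsilon(2n+1)/3 < \varepsilon n$, completing the proof.
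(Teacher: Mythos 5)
Your argument is correct and is essentially the paper's proof spelled out: the paper simply declares the lemma an immediate corollary of Theorem D of \cite{BL} (the $L^{2}$ polynomial weak mixing theorem), using only that $\Vert\mathbf{1}_{B}\Vert_{L^{2}(\mu)}\leq 1$, and your second-moment/product-system computation together with the Markov step is exactly the standard deduction hiding behind that one line. The only point worth making explicit is that you use two-sided averages, which is covered since Theorem D holds uniformly over intervals (equivalently, apply it also to the polynomials $p_{i}(-n)$, which satisfy the same hypotheses).
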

\begin{proof}
		Since $\Vert \bold{1}_{B}\Vert_{L^{2}(\mu)}\leq 1$, this lemma is an immediate corollary of Theorem D of \cite{BL}.  
\end{proof}

\begin{proof}[Proof of Theorem \ref{dense-one-under} Part (ii)]
	Let $0<a_{i},\e_{i}<1, i\in\mathbb{N}$ to be chosen later with $0<\sum_{i=1}^{\infty}a_{i}<1$. Since $T$ is weakly mixing and all of $p_{i}$, $p_{i}-p_{j}$ are also non-constant for all $i\neq j$, by \cite{BPet}, for every $B\in\mathcal{B}$, we have 
	$$\lim_{N\to\infty}\frac{1}{N}\sum_{n=0}^{N-1}\vert\mu(B\cap T^{p_{1}(n)}B\cap\dots\cap T^{p_{d}(n)}B)-\mu(B)^{d+1}\vert=0.$$
	So for every $\e>0$, there are infinitely many $N\in\N$ such that the set 
	$$\vert\{n\leq N\colon\vert\mu(B\cap T^{p_{1}(n)}B\cap\dots\cap T^{p_{d}(n)}B)-\mu(B)^{d+1}\vert>\e\mu(B)^{d+1}\}\vert<\e N.$$

	Let $A_{1}$ be an arbitrary set with $\mu(A_{1})=a_{1}$. Let $N_{1}\in\N$ be such that the cardinality of
	$$E_{1,N}:=\{n\leq N\colon\vert\mu(A_{1}\cap T^{p_{1}(n)}A_{1}\cap\dots\cap T^{p_{d}(n)}A_{1})-\mu(A_{1})^{d+1}\vert>\e_{1}\mu(A_{1})^{d+1}\}$$ is at most $\e_{1}N$ for all $N>N_{1}$.
	
	Suppose $A_{i}, N_{i}$ are chosen for all $i\leq k$. Denote $C_{j}=\bigcup_{i=1}^{j}A_{i}$. Since every weakly mixing system is ergodic and free, by Lemma \ref{roh2}, there exists a set $A_{k+1}$ with $\mu(A_{k+1})=a_{k+1}$, $A_{k+1}\cap C_{k}=\emptyset$ and 
	$$\mu((C_{k}\cup A_{k+1})\cap T^{p_{1}(n)}A_{k+1}\cap\dots\cap T^{p_{d}(n)}A_{k+1})>(1-\epsilon_{k})\mu(A_{k+1})$$
	for all $n\leq N_{k}$.
%TMA modified next paragraph.
        For convenience, let $p_0(n)=0$ for all $n$. 
	Let $N_{k+1}>N_{k}$ be such that the cardinality of
	\begin{equation}
        E_{k+1,N} := \{n\leq N_{k+1}\colon\vert\mu(\bigcap_{i=0}^{d}T^{p_{i}(n)}C_{k+1})-\mu(C_{k+1})^{d+1}\vert 
        >\e_{k+1}\mu(C_{k+1})^{d+1}\}
        \end{equation} 
        is at most $\e_{k+1}N$ for all $N>N_{k+1}$.
	
	We claim that $A=\bigcup_{i=1}^{\infty}A_{i}$ satisfies the condition of the theorem. Suppose that $N_{k}\leq N<N_{k+1}$. If $n\notin E_{k,N}$, then
	\begin{equation}\nonumber
	\begin{split}
	&\quad\mu(A\cap T^{p_{1}(n)}A\cap\dots\cap T^{p_{d}(n)}A)
	\\&\geq \mu(C_{k}\cap T^{p_{1}(n)}C_{k}\cap\dots\cap T^{p_{d}(n)}C_{k})+\sum_{i=2}^{\infty}\mu(C_{k+i}\cap T^{p_{1}(n)}A_{k+i}\cap\dots\cap T^{p_{d}(n)}A_{k+i})
	\\&>(1-\e_{k})\mu(C_{k})^{d+1}+\sum_{i=2}^{\infty}(1-\e_{k+i})\mu(A_{k+i})
	\\&=(1-\e_{k})(a_{1}+\dots+a_{k})^{d+1}+\sum_{i=2}^{\infty}(1-\e_{k+i})a_{k+i}.
	\end{split}
	\end{equation}
	If we pick $a_{i}=\frac{a}{i(i+1)}$, $a$ sufficiently small, and $\e_{i}$ decreasing to 0 sufficiently fast, then 
	\begin{equation}\nonumber
	\begin{split}
	&\quad(1-\e_{k})(a_{1}+\dots+a_{k})^{d+1}+\sum_{i=2}^{\infty}(1-\e_{k+i})a_{k+i}
	\\&>(1-\e_{k})((a-\frac{a}{k+1})^{d+1}+\frac{a}{k+2})>a^{d+1}.
	\end{split}
	\end{equation}
	Since $\vert E_{k,N}\vert<\e_{k}N$ and $\e_{k}\to 0$, the set
	$$\{n\in\mathbb{N}\colon\mu(A\cap T^{p_{1}(n)}A\cap\dots\cap T^{p_{d}(n)}A)>\mu(A)^{d+1}\}$$
	is of density 1.   
\end{proof}

We now  prove the following theorem which is a more general form of Theorem \ref{thm:e0}: 
\begin{thm}[Ces\`aro over-independence]\label{thm:e00} Let $(X,\mathcal{B},\mu,T)$ be a free, invertible, ergodic probability measure preserving system and $M\in\mathbb{N}$. Then
	there is $A\in\mathcal{B}$ such that for all $k\in\mathbb{N}$, there exists $L_{k}\in\N$ such that
	$$\frac{1}{N}\sum_{n=M}^{N+M-1}\mu(A\cap T^{kn}A)>\mu(A)^{2}$$
	for all $N\geq L_{k}$.\footnote{The condition $N>L_{k}$ is necessary unless $\mu(A\cap T^{n}A)>\mu(A)^{2}$ for all $n\in\N$ (by Part (i) of Theorem \ref{dense-one-under} below, such a set does not always exist). To see this, suppose that $\mu(A\cap T^{n}A)\leq \mu(A)^{2}$ for all $n\in\N$. Then for $N=M=1$ and $k=n$, we have
		$$\frac{1}{N}\sum_{n=M}^{N+M-1}\mu(A\cap T^{kn}A)=\mu(A\cap T^{n}A)\leq \mu(A)^{2}.$$}
	Moreover, we can further require that $L_{k}=0$ for finitely many $k\in\N$.
\end{thm}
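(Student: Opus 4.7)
The proof follows the inductive template of Theorems \ref{thm:hm} and \ref{dense-one-under}(ii). Build $A = \bigcup_{i=1}^{\infty} A_i$ as a countable disjoint union via iterated application of Lemma \ref{roh2}, with $a_i := \mu(A_i)$ summable to $a = \mu(A)$ (say $a_i = a/(i(i+1))$), and sequences $\epsilon_i \searrow 0$ and $N_i \nearrow \infty$ to be specified. At stage $i+1$, given $A_1, \dots, A_i$ with $C_i = \bigcup_{j \leq i} A_j$, apply Lemma \ref{roh2} with $d = 1$ and $c_{1, n} = n$ for $|n| < N_i$ to produce $A_{i+1}$ disjoint from $C_i$ of measure $a_{i+1}$ satisfying $\mu((C_i \cup A_{i+1}) \cap T^n A_{i+1}) > (1 - \epsilon_i)\mu(A_{i+1})$ for $|n| < N_i$. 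Since $C_i \cup A_{i+1} \subseteq A$ and the $\{A_j\}$ are disjoint, this yields the pointwise lower bound
\[
\mu(A \cap T^m A) \;\geq\; \sum_{j \,:\, N_{j-1} > |m|} (1 - \epsilon_{j-1})\, a_j.
\]

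For each $k \in \mathbb{N}$, sum this bound with $m = kn$ over $n \in [M, N+M-1]$ and interchange summation order, obtaining that the Ces\`aro average is bounded below by $\sum_j (1-\epsilon_{j-1}) a_j \cdot |\{n \in [M, N+M-1] : kn < N_{j-1}\}|/N$. For $N$ small enough that every $n$ in the range satisfies $kn < N_{j-1}$ for some $j$ with $\mathrm{tail}_j := \sum_{i \geq j} a_i > \mu(A)^2$ (possible for $j \leq j^* := \max\{j : \mathrm{tail}_j > \mu(A)^2\}$ by taking $N_{j-1}$ large), this bound already exceeds $\mu(A)^2$. For larger $N$, appeal to the mean ergodic theorem for $T^k$, which gives $\frac{1}{N}\sum_{n=0}^{N-1} \mu(A \cap T^{kn} A) \to L_k := \int_X \mathbb{E}[\mathbf{1}_A \mid \mathcal{I}_{T^k}]^2\, d\mu \geq \mu(A)^2$. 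When $L_k > \mu(A)^2$ strictly (e.g.\ when $T^k$ fails to be ergodic), the Ces\`aro sum exceeds $\mu(A)^2$ for all $N$ large, so an appropriate $L_k$ exists. When $L_k = \mu(A)^2$, one shows that the partial sums $\sum_{n=M}^{N+M-1}[\mu(A \cap T^{kn} A) - \mu(A)^2]$ remain positive for all $N$ past some $L_k$, enforced by choosing the $N_i$ to grow so rapidly that the positive ``tower excess'' accumulated on the range $|m| < N_{j^* - 1}$ dominates the subsequent oscillations.

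For the ``moreover'' clause, given a finite set $\{k_1, \ldots, k_d\} \subseteq \mathbb{N}$ on which $L_{k_j} = 0$ is required, modify the inductive step to apply Lemma \ref{roh2} with the $d$ simultaneous constraints $c_{j, n} = k_j n$ for $|n| < N_i$. The resulting pointwise bound $\mu(A \cap T^{k_j n} A) \geq \sum_{i \,:\, N_i > k_j |n|} (1 - \epsilon_i)\, a_{i+1}$, combined with rapid enough growth of $N_i$ (so that for the fixed finite collection $\{k_j\}$, every scale is eventually reached within the positive-tail regime), forces $\mu(A \cap T^{k_j n} A) > \mu(A)^2$ for all $n$ and all $j$, giving $L_{k_j} = 0$. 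The main obstacle is the case $L_k = \mu(A)^2$ above: the mean ergodic theorem only delivers $\frac{1}{N}\sum \to \mu(A)^2$ with no information on the direction of approach, so the calibration of $(a_i, \epsilon_i, N_i)$ must be such that the finite positive excess contributed by the tower construction outweighs the $o(N)$ fluctuations inherent in the ergodic limit, keeping the partial sums above $N\mu(A)^2$ from $L_k$ onward.
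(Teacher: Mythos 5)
Your construction of $A=\bigcup_i A_i$ via Lemma \ref{roh2} is the right starting point, but the way you combine the two ingredients has a genuine gap in exactly the case you flag as ``the main obstacle,'' and that case is not fixable by calibration of $(a_i,\epsilon_i,N_i)$ alone. Your pointwise bound $\mu(A\cap T^mA)\geq\sum_{j:N_{j-1}>|m|}(1-\epsilon_{j-1})a_j$ uses only the tail sets and therefore decays to $0$ as $|m|$ grows; consequently the total excess it contributes to $\sum_{n=M}^{N+M-1}\bigl[\mu(A\cap T^{kn}A)-\mu(A)^2\bigr]$ is a \emph{fixed constant} (accumulated on the finitely many $n$ with $kn<N_{j^*-1}$). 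For large $N$ you then fall back on the mean ergodic theorem applied to $A$ itself, whose limit can equal $\mu(A)^2$ exactly. But the partial sums $\sum_{n\leq N}\bigl[\mu(A\cap T^{kn}A)-\mu(A)^2\bigr]$ are then only known to be $o(N)$; they could perfectly well drift to $-\infty$ (e.g.\ at rate $\sqrt{N}$) while the Ces\`aro average still converges to $\mu(A)^2$. No fixed finite ``tower excess'' can dominate an unbounded $o(N)$ deficit, so the argument does not close.

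The missing idea is to never apply the ergodic theorem to $A$ itself. Instead, for $N_k\leq N<N_{k+1}$ decompose
\[
\mu(A\cap T^{mn}A)\;\geq\;\mu(C_k\cap T^{mn}C_k)+\sum_{i=2}^{\infty}\mu(C_{k+i}\cap T^{mn}A_{k+i}),
\]
apply the ergodic theorem only to the \emph{truncation} $C_k$ (its Ces\`aro average is $>(1-\epsilon_k)\mu(C_k)^2$ for $N\geq N_k$, a one-sided bound that tolerates approach from below), and observe that the second sum contributes $>\sum_{i\geq2}(1-\epsilon_{k+i})a_{k+i}\approx a/(k+2)$ \emph{for every} $n$ in the range, not just for small $n$ --- this is where the Rohlin towers must be built to accommodate exponents up to $(k+k_0)(N_k+|M|)$ rather than just $|n|<N_i$ as in your setup, so that $T^{mn}A_{k+i}$ is controlled for all relevant $m$ and $n$. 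The per-term excess $a/(k+2)$ is linear in $a$ while the deficit $\mu(A)^2-\mu(C_k)^2\approx 2a\cdot a/(k+1)$ is quadratic in $a$, so with $a$ small and $\epsilon_i\to0$ fast the total exceeds $\mu(A)^2$ for every such $N$, with no reference to the direction of approach in the ergodic theorem. The ``moreover'' clause then comes from running the induction with the range inflated by the factor $k+k_0$ and treating $m\leq k_0$ separately for $N<N_1$; your idea of imposing finitely many simultaneous constraints in Lemma \ref{roh2} is in the right spirit but must be combined with the decomposition above to work for all $N$.
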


%It is worth noting that a similar result holds all for non-ergodic systems $(X,\mathcal{B},\mu,T)$ which contains 
%Since $X$ contains an ergodic component which is aperiodic, by restricting ourselves to this component, we may assume without loss of generality that $(X,\mathcal{B},\mu,T)$ is ergodic and aperiodic. Thus $(X,\mathcal{B},\mu,T)$ is atom-less.
\begin{proof}
	Let $k_{0}\in\mathbb{N}$ be arbitrary and we will require that $L_{k}=0$ for all $k\leq k_{0}$ in the proof.
	Let $0<a_{i},\e_{i}<1, i\in\mathbb{N}$ to be chosen later. Let $I(T^{k})$ denote the $T^{k}$-invariant $\sigma$-algebra of $X$. 
%TMA modified next line.
        By the ergodic theorem, for every $B\in\mathcal{B}$, we have
	$$\lim_{N\to\infty}\frac{1}{N}\sum_{n=M}^{N+M-1}\mu(B\cap T^{kn}B)=\int_{X}\mathbb{E}(\bold{1}_{B}\vert I(T^{k}))^{2}\,d\mu,$$
	which in turn, implies that
	$$\lim_{N\to\infty}\frac{1}{N}\sum_{n=M}^{N+M-1}\mu(B\cap T^{kn}B)\geq \mu(B)^{2}.$$
	
	Let $A_{1}$ be an arbitrary set with $\mu(A_{1})=a_{1}$. 
	There exists $N_{1}\in\N$ such that 
	$$\frac{1}{N}\sum_{n=M}^{N+M-1}\mu(A_{1}\cap T^{n}A_{1})>(1-\e_{1})\mu(A_{1})^{2}$$
	for all $n>N_{1}$.
	
	Suppose $A_{i}, N_{i}$ are chosen for all $i\leq k$. Denote $C_{j}=\bigcup_{i=1}^{j}A_{i}$. Let $A_{k+1}$ be
	such that $\mu(A_{k+1})=a_{k+1}, A_{k+1}\cap C_{k}=\emptyset,$ and
	$$\mu((C_{k}\cup A_{k+1})\cap T^{n}A_{k+1})>(1-\e_{k})\mu(A_{k+1})$$
	for all $n<(k+k_{0})(N_{k}+\vert M\vert)$. The existence of $A_{k+1}$ is guaranteed by Lemma \ref{roh2} if $0<\sum_{i=1}^{\infty}a_{i}<1$. Let $N_{k+1}>(k+k_{0})(N_{k}+\vert M\vert)$ be such that 
	$$\frac{1}{N}\sum_{n=M}^{N+M-1}\mu(C_{k+1}\cap T^{mn}C_{k+1})>(1-\e_{k+1})\mu(C_{k+1})^{2}$$
	for all $n>N_{k+1}$ and $1\leq m\leq k+1$.
	
	Let $A=\bigcup_{i=1}^{\infty}A_{i}, L_{k}=N_{k}$ if $k>k_{0}$ and $L_{k}=0$ if $k\leq k_{0}$. We claim that such $A$ and $L_{k}$  satisfy the condition of the theorem. Fix $m\in\N$. We first assume that $m>k_{0}$. Let $N>L_{m}=N_{m}$ and suppose that $N_{k}\leq N<N_{k+1}$ for some $k\geq m$. Then
	\begin{equation}\nonumber
	\begin{split}
	&\quad\frac{1}{N}\sum_{n=M}^{N+M-1}\mu(A\cap T^{mn}A)
	\\&\geq \frac{1}{N}\sum_{n=M}^{N+M-1}\mu(C_{k}\cap T^{mn}C_{k})+\frac{1}{N}\sum_{n=M}^{N+M-1}\sum_{i=2}^{\infty}\mu(C_{k+i}\cap T^{mn}A_{k+i})
	\\&>(1-\e_{k})\mu(C_{k})^{2}+\sum_{i=2}^{\infty}(1-\e_{k+i})\mu(A_{k+i})
	\\&=(1-\e_{k})(a_{1}+\dots+a_{k})^{2}+\sum_{i=2}^{\infty}(1-\e_{k+i})a_{k+i}.
	\end{split}
	\end{equation}
	If we pick $a_{i}=\frac{a}{i(i+1)}$, $a$ sufficiently small, and $\e_{i}$ decreasing to 0 sufficiently fast, then 
	\begin{equation}\label{50}
	\begin{split}
	&\quad(1-\e_{k})(a_{1}+\dots+a_{k})^{2}+\sum_{i=2}^{\infty}(1-\e_{k+i})a_{k+i}
	\\&>(1-\e_{k})((a-\frac{a}{k+1})^{2}+\frac{a}{k+2})>a^{2}=\mu(A)^{2}.
	\end{split}
	\end{equation}  
	Now suppose $m\leq k_{0}$ and $N\geq L_{m}=0$. If $N<N_{1}$, then
	\begin{equation}\nonumber
	\begin{split}
	&\quad\frac{1}{N}\sum_{n=M}^{N+M-1}\mu(A\cap T^{mn}A)
	\geq \frac{1}{N}\sum_{n=M}^{N+M-1}\sum_{k=2}^{\infty}\mu(A\cap T^{mn}A_{k})
	\\&>\sum_{k=2}^{\infty}(1-\e_{1})\mu(A_{k})=(1-\e_{1})a/2>a^{2}=\mu(A)^{2}.
	\end{split}
	\end{equation}
	
	 If  $N_{k}\leq N<N_{k+1}$ for some $k\geq 1$, then 
		\begin{equation}\nonumber
		\begin{split}
		&\quad\frac{1}{N}\sum_{n=M}^{N+M-1}\mu(A\cap T^{mn}A)
		\\&\geq \frac{1}{N}\sum_{n=M}^{N+M-1}\mu(C_{k}\cap T^{mn}C_{k})+\frac{1}{N}\sum_{n=M}^{N+M-1}\sum_{i=2}^{\infty}\mu(C_{k+i}\cap T^{mn}A_{k+i})
		\\&>(1-\e_{k})\mu(C_{k})^{2}+\sum_{i=2}^{\infty}(1-\e_{k+i})\mu(A_{k+i})
		\\&=(1-\e_{k})(a_{1}+\dots+a_{k})^{2}+\sum_{i=2}^{\infty}(1-\e_{k+i})a_{k+i}.
		\end{split}
		\end{equation}
		The proof is finished by invoking (\ref{50}).
\end{proof}
%TMA modified next lines.
% We conclude this section with the following simple observation:
The following proposition contrasts with the positive results on under- and over-independence 
by showing that ergodic translations on a compact group do not contain UI nor OI sets. 
\begin{prop}
	Let $X$ be a compact group with the normalized Haar measure $\mu$ and the $\sigma$-algebra  of the Borel set $\mathcal{B}$. Let $T$ be an ergodic translation on $X$. Then the measure preserving system 
	$(X,\mathcal{B},\mu,T)$ does not contain non-trivial UI or OI sets.
\end{prop}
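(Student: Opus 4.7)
The strategy is to encode the correlation values $\mu(A\cap T^nA)$ as values of a continuous function on $X$ and exploit both the density of the orbit of $g$ and a simple averaging identity. Write $Tx=gx$ for a fixed $g\in X$; ergodicity of $T$ with respect to Haar measure is equivalent to $\{g^n:n\in\mathbb{Z}\}$ being dense in $X$. For a measurable set $A$ with $\mu(A)\in(0,1)$, define
\[
\phi:X\to[0,1],\qquad \phi(h)=\mu(A\cap hA)=\int_X \mathbf{1}_A(x)\,\mathbf{1}_A(h^{-1}x)\,d\mu(x).
\]
Three facts underpin everything: (a) $\phi$ is continuous, because $h\mapsto \mathbf{1}_A(h^{-1}\cdot)$ is continuous from $X$ into $L^2(\mu)$ (standard strong continuity of translation on a compact group); (b) bi-invariance of Haar measure together with Fubini yields $\int_X \phi(h)\,d\mu(h)=\mu(A)^2$; (c) $\phi(e)=\mu(A)>\mu(A)^2$ since $\mu(A)\in(0,1)$. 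Note also that $e$ lies in the closure of $\{g^n:n\neq 0\}$: this is immediate when $g$ has finite order (since some $g^n$ with $n\neq 0$ equals $e$), and follows from density when $g$ has infinite order.

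Suppose first that $A$ is a UI set, so that $\phi(g^n)<\mu(A)^2$ for every $n\neq 0$. Pick a sequence $n_k\neq 0$ with $g^{n_k}\to e$. By continuity of $\phi$,
\[
\mu(A)=\phi(e)=\lim_{k\to\infty}\phi(g^{n_k})\leq \mu(A)^2,
\]
forcing $\mu(A)\geq 1$, which contradicts $\mu(A)<1$.

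Suppose instead that $A$ is an OI set, so that $\phi(g^n)>\mu(A)^2$ for every $n\neq 0$. Density of $\{g^n:n\neq 0\}$ in $X$ combined with continuity of $\phi$ gives $\phi(h)\geq \mu(A)^2$ for every $h\in X$. Since $\phi$ is continuous and $\phi(e)=\mu(A)>\mu(A)^2$, there is an open neighborhood $U$ of $e$ on which $\phi>\mu(A)^2$; as $\mu(U)>0$, integrating gives
\[
\int_X \phi\,d\mu>\mu(A)^2,
\]
contradicting (b). There is no real obstacle in this argument: the only non-trivial ingredient is the continuity of $\phi$, which is the standard strong continuity of translation in $L^p$-spaces on a compact group, and the whole argument hinges on the simple observation that $\phi(e)$ strictly exceeds the mean value $\mu(A)^2$ of $\phi$ precisely when $A$ is non-trivial.
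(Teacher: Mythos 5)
Your proof is correct, and while the UI half is essentially the paper's argument in different clothing, the OI half takes a genuinely different route. For UI, the paper invokes rigidity of translations, i.e.\ a sequence $(n_i)$ with $\Vert T^{-n_i}\mathbf{1}_A-\mathbf{1}_A\Vert_{L^2(\mu)}\to 0$, hence $\mu(A\cap T^{n_i}A)\to\mu(A)$; this is exactly your observation that $\phi(h)=\mu(A\cap hA)$ is continuous at $e$ and that $e$ is a limit of points $g^{n_k}$ with $n_k\neq 0$. For OI, the paper stays on the time side: it asserts that for each $\epsilon>0$ the set $E$ of times with $\vert\mu(A\cap T^nA)-\mu(A)\vert<\epsilon$ is syndetic, splits the Ces\`aro average $\frac{1}{N}\sum_{n=0}^{N-1}\mu(A\cap T^nA)$ over $E$ and its complement to get a limit strictly above $\mu(A)^2$, and contradicts the ergodic theorem. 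You instead transfer the problem to the group: the correlation sequence is the restriction of the continuous function $\phi$ to the dense set $\{g^n:n\neq 0\}$, the spatial mean $\int_X\phi\,d\mu$ equals $\mu(A)^2$ by Fubini and bi-invariance of Haar measure, and $\phi(e)=\mu(A)>\mu(A)^2$ then forces $\int_X\phi\,d\mu>\mu(A)^2$ once continuity and density give $\phi\geq\mu(A)^2$ everywhere. Your version buys a cleaner, more self-contained argument: it avoids the syndeticity claim (which the paper leaves unproved) and the upper-density bookkeeping, replacing the ergodic theorem by an elementary Haar-integral identity; the cost is that it is specific to group rotations, whereas the paper's time-average scheme is the template that would extend to other rigid systems. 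If you write this up, do spell out the two small facts you gesture at: that $e$ lies in the closure of $\{g^n:n\neq 0\}$ when $g$ has infinite order (compactness gives an accumulation point of $\{g^n\}_{n\geq 1}$, hence $g^{m-n}$ arbitrarily close to $e$ for suitable $m>n$), and that strict pointwise inequality on a nonempty open set, which has positive Haar measure, yields strict inequality of the integrals.
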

\begin{proof}
	Note that any translation on a compact group is rigid, meaning that there exist a sequence of integers $(n_{i})_{i\in\mathbb{N}}$ such that for all $A\in\mathcal{B}$,
	\begin{equation}\label{4}
	\begin{split}
\Vert T^{-n_{i}}\bold{1}_{A}-\bold{1}_{A}\Vert_{L^{2}(\mu)}\to 0.
	\end{split}
	\end{equation}
	It follows from (\ref{4}) that for all $A\in\mathcal{B}$, $\mu(A\cap T^{n_{i}}A)\to \mu(A)$ as $i\to\infty$, which clearly implies that 	$(X,\mathcal{B},\mu,T)$ contains no non-trivial UI sets.

	Note that for any $A\in\mathcal{B}$ and $\e>0$,  there exists a syndetic set $E\subset\mathbb{N}$ such that 
	$\vert\mu(A\cap T^{n}A)-\mu(A)\vert<\epsilon$ for all $n\in E$.
	Now suppose that $A\in\mathcal{B}, 0<\mu(A)<1,$ is an OI set. 
	%By Theorem 2.4 
	Then
%TMA reformatted slightly on summation index.
	\begin{equation}\nonumber
		\begin{split}
			\qquad&\lim_{N\to\infty}\frac{1}{N}\sum_{n=0}^{N-1} \mu(A\cap T^{n}A)
			\\&=\lim_{N\to\infty}\frac{1}{N}\Bigl(\sum_{0\leq n< N, n\in E} \mu(A\cap T^{n}A)+\sum_{0\leq n< N, n\notin E} \mu(A\cap T^{n}A)\Bigr)
			\\&\geq \lim_{N\to\infty}\frac{1}{N}\Bigl(\sum_{0\leq n< N, n\in E} (\mu(A)-\epsilon)+\sum_{0\leq n< N, n\notin E} \mu(A)^{2}\Bigr)
			\\&=d^{*}(E)(\mu(A)-\epsilon)+(1-d^{*}(E))\mu(A)^{2}.
		\end{split}
	\end{equation}
	Since $\epsilon$ is arbitrary and $d^{*}(E)>0$, we have
	$$\lim_{N\to\infty}\frac{1}{N}\sum_{n=0}^{N-1} \mu(A\cap T^{n}A)>\mu(A)^{2},$$
	which contradicts the ergodic theorem. So 	$(X,\mathcal{B},\mu,T)$ does not contain OI sets.
\end{proof}

%Finally we prove Proposition \ref{thm:e1}.
%\begin{proof}[Proof of Proposition \ref{thm:e1}]
%	By assumption every almost every point in $X$ is periodic. By using Kakutani tower??, we can write $X$ as $X=\bigcup_{k=1}^{\infty}U_{k}$, where each $U_{k}$ is of periodic $k$ with measure $a_{k}$, and they are pairwise disjoint (thus $\sum_{k=1}^{\infty}a_{k}=1$). Moreover, for each $k>0$, there exists $A_{k}\subseteq U_{k}$ such that $T^{i}A_{k}, 0\leq i<k$ are pairwise disjoint and $\bigcup_{i=0}^{k-1}T^{i}A_{k}=U_{k}$.
%	Since $X$ is aperiodic, $a_{k}>0$ for infinitely many $k$. 
%	
%	(1) Suppose that $M\leq 0$. Let $$ 
%\end{proof}

\section{Positive and negative results for under-independence sets}\label{sec:3}

In this section we prove Part (i) of Theorem \ref{dense-one-under} 
and Proposition \ref{prop:e0}. 

\subsection{Proof of Part (i) of Theorem \ref{dense-one-under}}
We start with a general procedure for constructing the candidate set $A$.  A sequence 
of parameters is used to construct $A$. 
Then we show how to choose the parameters 
such that $A$ is a density-1 UI set. 

\subsubsection{Set Engineering}
Let $d\in\mathbb{N}$ and polynomials $p_{1},\dots,p_{d}$ be fixed. We may assume without loss of generality that when $n>0$, all $p_{i}(n)$ are monotone increasing and $0<p_{1}(n)<p_{2}(n)<\dots<p_{d}(n)$.
Let $q$ be any prime number such that for all $j\in\N$, we have that 
$$\Bigl\vert\{0\leq n<q^{j}\colon q^{j}\vert p_{1}(n)\}\Bigr\vert\leq \deg(p_{1})$$
(this can be achieved by picking $q$ such that  $p_{1}(x)\not\equiv 0\mod q$ as an element in $(\mathbb{Z}/q\mathbb{Z})[x]$, then $p_{1}$ has at most $\deg(p_{1})$ roots in  $(\mathbb{Z}/q^{j}\mathbb{Z})[x]$).

Denote $c=\frac{d+1}{d}$ and $S=\sum_{p=1}^{\infty}\frac{1}{p^{c}}$. Let $a \in (0,{S} / {100qd})$ be a real number. 
Define $a_p = {a} / { S p^{c}}$ for $p \geq 1$. 
%The final equality in the above formula may be obtained 
%by observing that $g(x) = \sum_{n=1}^{\infty} n x^{n+1}$ 
%satisfies:
%$$g(x) = x^2 f^{\prime}(x)$$
%where $f(x) = \sum_{n=0}^{\infty} x^n$.  However, 
%$f(x) = {1} / {(1-x)}$ and $f^{\prime}(x) = {1} / {(1-x)^2}$. 
%Thus, $g(x) = {x^2} / {(1-x)^2}$ and $g( {1} / {2} ) = 1$. 
Observe that 
\[ 
\sum_{p=1}^{\infty} a_p  =  a . 
\]
Let $\alpha \in \N$ be such that $\alpha > \frac{(d+1)d^{d+1}S^{d}}{a^{d}}$. 
For $i \geq 0$, 
define $c_{\alpha p+j} = {a_{p+1}} / {\alpha}$ for $0 \leq j \leq \alpha - 1$. 
Also, define $b_i$ such that 
$$ b_n = q^m c_n$$ 
for $\alpha ((q+1)^{dm} - 1) \leq n < \alpha ((q+1)^{d(m+1)} - 1)$. 
Note that 
$$\sum_{n=1}^{\infty}c_{n}=\sum_{p=1}^{\infty}a_{p}=a$$
and
\begin{align}
\sum_{n=1}^{\infty} b_n &= \sum_{m=0}^{\infty} \sum_{p=(q+1)^{dm}}^{(q+1)^{d(m+1)}-1} \frac{q^m a}{Sp^{c}} 
\leq \frac{a}{S} \sum_{m=0}^{\infty} q^m \int_{(q+1)^{dm}}^{(q+1)^{d(m+1)}} \frac{1}{x^{c}}\,dx \\ 
&= \frac{ad}{S} \sum_{m=0}^{\infty} q^m \frac{q}{(q+1)^{m+1}}  =  qda/S. 
\end{align}

Let $\ell_n = q^m=\frac{b_{n}}{c_{n}}$ if $\alpha ((q+1)^{dm} - 1) \leq n < \alpha ((q+1)^{d(m+1)} - 1)$ for some $m \geq 0$ and denote $m_{n}=\log_{q}\ell_{n}$. Let $(\epsilon_{n})_{n\in\N}$ be a sequence of positive numbers tending to 0 sufficiently fast.  $(\epsilon_{n})_{n\in\N}$ depends only on $(a_{n})_{n\in\N}$ and its choice will be clear in the proof.
%TMA added "be" to previous line.
 
\subsubsection{Construction of the density-1 UI set}
Let $(X,\mathcal{B},\mu,T)$ be a probability measure preserving system. For $L_{0}\in\mathbb{N}$ and $\e>0$, we say that a set $D\in\mathcal{B}$ is \emph{$(L_{0},\e)$-uniform} if for every Rohlin tower $\bigcup_{i=0}^{L}T^{i}B$ of height $L\geq L_{0}$ with $\mu(\bigcup_{i=0}^{L}T^{i}B)>1-\e$ and every $I\subseteq B$, we have that
$$\Bigl\vert\mu(\bigcup_{i=0}^{L}T^{i}I\cap D)-\mu(D)\mu(\bigcup_{i=0}^{L}T^{i}I)\Bigr\vert<\e\mu(D)\mu(\bigcup_{i=0}^{L}T^{i}I).$$

\begin{lem}\label{lem:uniform}
	Let $(X,\mathcal{B},\mu,T)$ be a probability measure preserving system. For all $D\in\mathcal{B}$ with $\mu(D)>0$ and $\e>0$, there exists $L_{0}\in\mathbb{N}$ such that $D$ is $(L_{0},\e)$-uniform.
\end{lem}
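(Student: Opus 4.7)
The plan is to rephrase $(L_0,\e)$-uniformity as an estimate on Birkhoff averages and then invoke the mean ergodic theorem. Using the pairwise disjointness of $T^0 I, T^1 I, \dots, T^L I$ for $I\subseteq B$ inside a Rohlin tower, one has $\mu(\bigcup_{i=0}^L T^i I) = (L+1)\mu(I)$ and
\[
\mu\Bigl(\bigcup_{i=0}^L T^i I \cap D\Bigr) \;=\; \sum_{i=0}^L \mu(I\cap T^{-i}D) \;=\; (L+1)\int_I f_{L+1}\,d\mu,
\]
where $f_N(y) := \tfrac{1}{N}\sum_{i=0}^{N-1}\mathbf{1}_D(T^i y)$. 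Dividing the defining inequality by $(L+1)\mu(I)$ reduces $(L_0,\e)$-uniformity to the assertion that $\bigl|\int_I (f_{L+1}-\mu(D))\,d\mu\bigr| < \e\mu(D)\mu(I)$ for every admissible tower, every nonempty $I\subseteq B$, and every $L\geq L_0$.

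Next I would apply Birkhoff's pointwise ergodic theorem: $f_N\to \E(\mathbf{1}_D\mid \mathcal{I})$ almost everywhere, where $\mathcal{I}$ is the $T$-invariant $\sigma$-algebra; under the ergodicity present in all the applications of this lemma in the paper, the limit equals $\mu(D)$ a.e. Egorov's theorem then provides, for every $\eta>0$, a set $G=G_\eta$ with $\mu(G^c)<\eta$ and an integer $N_0$ such that $|f_N(y)-\mu(D)|<\e\mu(D)$ uniformly in $y\in G$ for every $N\geq N_0$. Choosing $L_0\geq N_0$ and splitting any $I\subseteq B$ as $(I\cap G)\cup (I\cap G^c)$, the integrand is pointwise bounded by $\e\mu(D)$ on the first piece, so the main task is controlling the contribution from the second.

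On $I\cap G^c$ one has only the trivial bound $|f_{L+1}-\mu(D)|\leq 1$, giving an error of at most $\mu(G^c)\leq \eta$. To absorb this into the required $\e\mu(D)\mu(I)$ budget uniformly in $I$, I would combine the forced lower bound $\mu(B)\geq (1-\e)/(L+1)$ on the base size with the near $T$-invariance $|f_{L+1}(Ty)-f_{L+1}(y)|\leq 1/(L+1)$ of the Birkhoff averages: if $B\cap G^c$ had substantial relative measure, then a large portion of every level $T^i B$ would also violate the uniform estimate, contradicting the smallness of $\mu(G^c)$ once $\eta$ is chosen small enough.

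The main obstacle is making this reduction watertight, since the naive pointwise bound on $G^c$ produces errors proportional to $\eta$ rather than to $\mu(I)$. My plan is to complement pointwise convergence with the $L^2$ mean ergodic theorem $\|f_N-\mu(D)\|_2^2\to 0$ and the maximal ergodic inequality, which together give quantitative control on the measure of $\{y:\sup_{N\geq L_0}|f_N(y)-\mu(D)|>\e\mu(D)\}$; exploiting this column-wise along the Rohlin tower should prevent the exceptional set from concentrating on too large a portion of the base $B$ relative to the error budget $\e\mu(D)\mu(I)$, and so close the estimate uniformly in $I$.
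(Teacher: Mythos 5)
Your reduction of $(L_0,\e)$-uniformity to the estimate $\bigl|\int_I(f_{L+1}-\mu(D))\,d\mu\bigr|<\e\mu(D)\mu(I)$ for all $I\subseteq B$, where $f_{N}=\frac1N\sum_{i=0}^{N-1}\mathbf{1}_D\circ T^i$, is correct and is exactly the reduction underlying the paper's argument; you are also right that ergodicity must be added to the hypotheses for the limit of $f_N$ to be $\mu(D)$. However, your final two paragraphs are a plan rather than a proof, and the obstacle you flag there is not removable by maximal inequalities. Since $I$ ranges over \emph{all} positive-measure subsets of $B$, applying the required inequality to $I=\{x\in B\colon f_{L+1}(x)-\mu(D)\ge\e\mu(D)\}$ (and to its counterpart for the lower bound) shows that it is \emph{equivalent} to the essential-supremum bound $|f_{L+1}(x)-\mu(D)|\le\e\mu(D)$ for a.e.\ $x\in B$. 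Egorov, the mean ergodic theorem and the maximal ergodic inequality control only the \emph{measure} of the set where $f_{L+1}$ deviates; they never force that set to miss $B$ entirely. Your column-spreading argument via $|f_{L+1}(Ty)-f_{L+1}(y)|\le 2/(L+1)$ shows only that the bad part of $B$ has measure at most a constant times $\eta/(\e\mu(D)L)$, which is still positive, and any positive-measure bad subset of $B$ defeats the estimate.

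In fact the statement, read literally, is false even for ergodic (indeed mixing) systems, so the gap cannot be closed: in a Bernoulli shift with $D=\{x\colon x_0=1\}$ the cylinder $E_L=\{x\colon x_0=\dots=x_L=1\}$ has positive measure for every $L$, and by regrouping the levels of a sufficiently tall Rohlin tower covering all but $\min(\e,\mu(E_L))/2$ of the space into columns of height $L+1$ one obtains a qualifying tower whose base $B$ meets $E_L$ in positive measure; for $I=B\cap E_L$ one gets $\mu(\bigcup_{i=0}^{L}T^iI\cap D)=\mu(\bigcup_{i=0}^{L}T^iI)$, which is not within a factor $1\pm\e$ of $\mu(D)\mu(\bigcup_{i=0}^{L}T^iI)$. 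For comparison, the paper's own proof applies the $L^2$ mean ergodic theorem and an implicit Cauchy--Schwarz, which yields the bound $(L+1)\e\mu(D)\mu(I)^{1/2}$ rather than the asserted $(L+1)\e\mu(D)\mu(I)$ --- the very discrepancy you identified. What your Birkhoff--Egorov argument does establish is the weaker statement with an additive error, $\bigl|\mu(\bigcup_iT^iI\cap D)-\mu(D)\mu(\bigcup_iT^iI)\bigr|<\e\mu(D)\mu(\bigcup_iT^iI)+\eta$, equivalently the multiplicative bound restricted to $I$ with $\mu(\bigcup_iT^iI)$ bounded below; that version is true and is all that the subsequent construction of the density-1 UI set actually requires (there the relevant unions have measure comparable to the fixed constants $c_n$). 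You should prove and use that weakened form rather than the lemma as stated.
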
	 
\begin{proof}
	Fix $D$ and $\e$.
	By ergodic theorem, there exists $L_{0}\in\mathbb{N}$ such that for all $L>L_{0}$, we have that $$\Bigl\Vert\frac{1}{L+1}\sum_{i=0}^{L}\bold{1}_{D}\circ T^{i}-\mu(D)\Bigr\Vert_{L^{2}(\mu)}<\e\mu(D).$$ So for all Rohlin tower $\bigcup_{i=0}^{L}T^{i}B$, we have that
	\begin{equation}\nonumber
	\begin{split}
	 &\quad \Bigl\vert\mu(\bigcup_{i=0}^{L}T^{i}I\cap D)-\mu(D)\mu(\bigcup_{i=0}^{L}T^{i}I)\Bigr\vert
	 \\&=\Bigl\vert\sum_{i=0}^{L}\int_{X}\bold{1}_{D}(x)\bold{1}_{I}(T^{-i}x)\,d\mu-(L+1)\mu(D)\mu(I)\Bigr\vert
	 \\&=\Bigl\vert\sum_{i=0}^{L}\int_{X}\bold{1}_{D}(T^{i}x)\bold{1}_{I}(x)\,d\mu-(L+1)\mu(D)\mu(I)\Bigr\vert
	 \leq(L+1)\e\mu(D)\mu(I),
	\end{split}
	\end{equation}
	which finishes the proof.
\end{proof}

We construct inductively a sequence of disjoint sets $A_{n}$ with $\mu(A_{n})=c_{n}$, and then show that the set $A=\cup_{n=1}^{\infty}A_{n}$ is what we want.

Let $h_{1}=1, r_{1}=k_{1}=0$. Let $A_{1}$ be an arbitrary set with $\mu(A_{1})=c_{1}$. Let $D_{1}=X\backslash A_{1}$, $E_{1}=F_{1}=A_{1}$ and $B_{1}=C_{1}=X$ (in fact the only useful information is that $\mu(A_{1})=c_{1}$, and all other parameters are just chosen for convenience). 

Denote  $\overline{A}_{n}=\cup_{i=1}^{n}A_{i}$ and
$d_n = \mu ( \bigcap_{i=1}^{n} D_i)$ (write $d_{0}=1$).
Suppose that the following have been defined:
$$h_{j}, r_{j}, k_{j}, A_{j}, B_{j}, 
C_{j}, D_{j}, E_{j}, F_{j}$$
for all $j<n$ for some $n\geq 2$ such that for all $j<n$, we have the following conditions: 
\begin{enumerate}
	\item {$\mu(A_{j})=c_{j}$ and $A_{1},\dots,A_{j}$ are pairwise disjoint;} \label{enum-3}
	\item {$C_{j}=\cup_{i=0}^{h_{j}-1}T^{i}B_{j}$ is a Rohlin tower of height $h_{j}$ and base $B_{j}$ such that $\mu(C_{j})>1-\e_{j}$ and $C_{j}$ is the disjoint union of $D_{j}$ and $F_{j}$;} \label{enum-6}
	\item {$\mu(E_{j})<2b_{j}\mu(B_{j})/d_{j-1}$, $\mu(F_{j})\leq 10b_{j}$ and $d_{j}>1-10qad/S$;}\label{enum-4}
	\item{if $j>1$, then for all measurable set $G$ and all $k>k_{j}$, 
		\[
		\Bigl\vert\{ i : 0\leq i < k, | \mu (H_{0}\cap T^{p_{1}(i)}H_{1}\cap\dots\cap T^{p_{d}(i)}H_{d}) - \mu(G)\mu (\bar{A}_{j-1})^{d} | 
		> \epsilon_{j}  \}\Bigr\vert < \epsilon_{j}  k 
		\] 
		whenever at most one of $H_{0},\dots,H_{d}$ equals to $G$ and all the others equals to $\bar{A}_{j-1}$;
	} \label{enum-1}
	\item{
		if $j>1$, then for all $I\subseteq \{1,2,\dots,j-1\}$, the set $\bigcap_{i\in I}D_{i}$ is  
		$(r_{j},\e_{j})$-uniform;} \label{enum-2}
	\item{$r_{j}>\max_{1\leq i\leq d}p_{i}(k_{j})$ if $j>1$.} \label{enum-5}
\end{enumerate}

It is easy to check that $h_{1}, r_{1}, k_{1}, A_{1}, B_{1}, 
C_{1}, D_{1}, E_{1}, F_{1}$  satisfy all the properties above.
 We now define inductively 
$$h_{n}, r_{n}, k_{n}, A_{n}, B_{n}, 
C_{n}, D_{n}, E_{n}, F_{n}$$
such that they satisfy the same properties above with $j$ replaced with $n$.

By Lemma \ref{lem:wmbh}, there exists $k_{n}\in\N$ such that
for all $k \geq k_{n}$, conditions (\ref{enum-1}) hold for $j=n$.
By Lemma \ref{lem:uniform}, we may pick $r_{n}>\max_{1\leq i\leq d}p_{i}(k_{n})$ such that condition (\ref{enum-2}) holds for $j=n$. Then condition (\ref{enum-5}) also holds for $j=n$.
Let $C_{n}$ be a Rohlin tower of height $h_{n}$ ($h_{n}\gg_{\ell_{n}}r_{n}$ to be chosen later) with base $B_{n}$ such that
\[
\mu ( \bigcup_{i=0}^{h_n - 1} T^i B_n ) > 1 - \epsilon_{n}. 
\]
For every $E\subseteq B_{n}$, denote 
$$R(E):=\left \{ T^{i \ell_{n}} x : r_{n} \leq i < h_{n} - r_{n}, 
x \in E, T^{i \ell_{n}} x \in \bigcap_{j=1}^{n-1} D_j \right \}.$$

Since $\bigcap_{j=1}^{n-1} D_j$ is $(r_{n},\e_{n})$-uniform, if $h_{n}$ is sufficiently large and $\e_{n}$ is sufficiently small (but $\e_{n}$ depends only on $a_{n}$), $\mu(R(E))$ is approximately $\frac{d_{n-1}\mu(E)}{\ell_{n}\mu(B_{n})}$. 
Since $\frac{d_{n-1}}{\ell_{n}}>\frac{1-3qa}{\ell_{n}}>\frac{b_{n}}{\ell_{n}}=c_{n}$,
since every weakly mixing system is ergodic and free and thus atomless, there exists $E_{n}\subseteq B_{n}$ such that the set
$$A_{n}:=\left \{ T^{i \ell_{n}} x : r_{n} \leq i < h_{n} - r_{n}, 
x \in E_{n}, T^{i \ell_{n}} x \in \bigcap_{j=1}^{n-1} D_j \right \}$$
is of measure $c_{n}$ and 
$$\mu(E_{n})<\frac{2c_{n}\ell_{n}\mu(B_{n})}{d_{n-1}}=\frac{2b_{n}\mu(B_{n})}{d_{n-1}},$$
provided that $h_{n}$ is sufficiently large and $\e_{n}$ is sufficiently small.
For this $A_{n}$, obviously condition (\ref{enum-3}) holds for $j=n$.

Now
Let $F_n = \cup_{i=0}^{h_n - 1} T^i E_n$ and 
$D_n = \cup_{i=0}^{h_n - 1} T^i (B_n \setminus E_n)$. Thus, $C_n = F_n \cup D_n$ and (\ref{enum-6}) is satisfied. Moreover, $\mu(F_{n})=h_{n}\mu(E_{n})<\frac{2b_{n}}{d_{n-1}}<\frac{2b_{n}}{1-10qad/S}<10b_{n}$, and 
$$d_{n}=\mu(\cap_{i=1}^{n}D_{i})\geq 1-\sum_{i=1}^{n}10b_{i}>1-10qad/S.$$
So condition (\ref{enum-4}) holds for $j=n$. This finishes the construction for $j=n$.

\subsubsection{End of the proof}
We now show that the set $A$ constructed in the previous section is what we want. We start with the following lemma:
\begin{lem}
	\label{lem12} Let the notations be as in the previous section. Then for all $i_{2}>i_{1}$ and $0\leq j<r_{i_{1}}$, we have that 
	$$\mu(T^{j}A_{i_{1}}\cap A_{i_{2}})=0.$$
\end{lem}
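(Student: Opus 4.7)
The plan is to exploit the disjoint decomposition $C_{i_1} = F_{i_1} \sqcup D_{i_1}$ recorded in condition~(\ref{enum-6}) of the inductive construction. I would establish the two set-theoretic inclusions
\[
A_{i_2} \subseteq D_{i_1} \qquad \text{and} \qquad T^j A_{i_1} \subseteq F_{i_1} \text{ for every } 0 \le j < r_{i_1};
\]
combining them yields $T^j A_{i_1} \cap A_{i_2} \subseteq F_{i_1} \cap D_{i_1} = \emptyset$, which is strictly stronger than the measure-zero conclusion that the lemma asks for.

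The first inclusion is immediate from the construction: every element of $A_{i_2}$ is of the form $T^{i\ell_{i_2}} x$ with $T^{i\ell_{i_2}} x \in \bigcap_{k=1}^{i_2-1} D_k$, and since $i_1 < i_2$ this intersection is contained in $D_{i_1}$.

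For the second inclusion, pick $y = T^{i\ell_{i_1}} x \in A_{i_1}$ with $x \in E_{i_1} \subseteq B_{i_1}$. The range of $i$ prescribed in the definition of $A_{i_1}$ was chosen precisely so that the height $i\ell_{i_1}$ sits a buffer of at least $r_{i_1}$ levels below the top of the Rohlin tower $C_{i_1}$. Consequently, for $0 \le j < r_{i_1}$ the shifted height $i\ell_{i_1} + j$ still lies in $[0, h_{i_1})$, and the tower property gives
\[
T^j y = T^{i\ell_{i_1}+j} x \in T^{i\ell_{i_1}+j} E_{i_1} \subseteq F_{i_1} = \bigcup_{m=0}^{h_{i_1}-1} T^m E_{i_1},
\]
as required.

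The only delicate point is verifying that the top buffer really does absorb every shift $j < r_{i_1}$, so that $T^j A_{i_1}$ cannot escape the tower $C_{i_1}$ into the region $X \setminus C_{i_1}$ where the behavior of $T$ is uncontrolled. Once that verification is in hand (it is built into the upper bound on the index $i$ in the inductive definition of $A_{i_1}$), the rest of the argument is pure set-theoretic bookkeeping using the partition of $C_{i_1}$ into $F_{i_1}$ and $D_{i_1}$.
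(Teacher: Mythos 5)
Your proof is correct and follows essentially the same route as the paper's: both arguments rest on the inclusions $A_{i_2}\subseteq D_{i_1}$ (since $A_{i_2}\subseteq\bigcap_{k=1}^{i_2-1}D_k$) and $T^jA_{i_1}\subseteq F_{i_1}$ for $0\le j<r_{i_1}$ (via the $r_{i_1}$-level buffer at the top of the tower), together with $F_{i_1}\cap D_{i_1}=\emptyset$. The paper's version is just terser, citing "the choice of $A_{i_1}$" for the second inclusion where you spell out the height bookkeeping.
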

\begin{proof}
	Note that $A_{i_{2}}\subset D_{i_{1}}$ and $A_{i_{1}}\subset F_{i_{1}}$. Since $0\leq j<r_{i_{1}}$, by the choice of $A_{i_{1}}$, we have that $T^{j}A_{i_{1}}\subset F_{i_{1}}$ and so
	$$\mu(T^{j}A_{i_{1}}\cap A_{i_{2}})\leq \mu(T^{j}A_{i_{1}}\cap D_{i_{1}})\leq \mu(F_{i_{1}}\cap D_{i_{1}})=0.$$
\end{proof}

Given sufficiently large $k \in \N$, 
let $n$ be such that $k_{n}\leq k < k_{n+1}$. 
%Choose $p_1, p_2 \in \N$ such that 
%$n = \alpha p_1 + p_2$ where $0 \leq p_2 < \alpha$. 
Also, choose $p$ such that 
$\alpha p \leq n < \alpha ( p + 1 )$. WLOG, assume $p > 0$. 
Denote $$U_{n}=A\backslash\overline{A_{n-1}}=\cup_{i=n}^{\infty}A_{i}.$$
By condition (\ref{enum-1}), there exists $W\subseteq \{0,1,\dots,k\}$ with $\vert W\vert>(1-\e_{n-1})k$  such that for all $i\in W$
\begin{equation}\nonumber
\begin{split}
\vert \mu(H_{0}\cap T^{p_{1}(i)}H_{1}\cap\dots\cap T^{p_{d}(i)}H_{d})-\mu(U_{n})\mu(\overline{A_{n-1}})^{d}\vert<\e_{n-1}
\end{split}
\end{equation}
whenever at most one of $H_{0},\dots,H_{d}$ equals to $U_{n}$ and all the others equals to $\bar{A}_{n-1}$. So for all $i\in W$, we have that 
\begin{equation}\label{equ:temp21}
\begin{split}
&\quad \mu(A\cap T^{p_{1}(i)}A\cap\dots\cap T^{p_{d}(i)}A)-\mu(A)^{d+1}
\\&< \mu(U_{n}\cap T^{p_{1}(i)}U_{n}\cap\dots\cap T^{p_{d}(i)}U_{n})-\mu(U_{n})^{d+1}+(d+1)\e_{n-1}.
\end{split}
\end{equation}
Since $p_{i}(k)<p_{i}(k_{n+1})<r_{n+1}$, by Lemma \ref{lem12}, we have that 
\begin{equation}\nonumber
\begin{split}
\mu(A_{j_{0}}\cap T^{p_{1}(i)}A_{j_{1}}\cap\dots\cap T^{p_{d}(i)}A_{j_{d}})=0
\end{split}
\end{equation}
if $j_{a}>j_{b}>n$ for some $0\leq a,b\leq d$. So
\begin{equation}\nonumber
\begin{split}
&\quad\mu(U_{n}\cap T^{p_{1}(i)}U_{n}\cap\dots\cap T^{p_{d}(i)}U_{n})
\\&\leq \sum_{j=n+1}^{\infty}\mu(A_{j}\cap T^{p_{1}(i)}A_{j}\cap\dots\cap T^{p_{d}(i)}A_{j})+(d+1)\mu(A_{n})
\\&\leq \sum_{j=n+1}^{\infty}\mu(A_{j}\cap T^{p_{1}(i)}A_{j})+(d+1)\mu(A_{n}).
\end{split}
\end{equation}
By the construction of $A_{j}$, 
$\mu(A_{j}\cap T^{p_{1}(i)}A_{j})=0$ unless $\ell_{j}=q^{m_{j}}$ divides $p_{1}(i)$. Let 
$W'=W\backslash \ell_{n+1}\mathbb{Z}$. Then by the choice of $q$, we have that
$$\vert W'\vert>(1-\e_{n-1}-\frac{\deg(p_{1})}{\ell_{n+1}})k$$
and for all $i\in W'$, since $\ell_{i}\vert\ell_{i+1}$, we have that 
\begin{equation}\label{equ:temp22}
\begin{split}
\quad\mu(U_{n}\cap T^{p_{1}(i)}U_{n}\cap\dots\cap T^{p_{d}(i)}U_{n})
\leq (d+1)\mu(A_{n}).
\end{split}
\end{equation}
Combining (\ref{equ:temp21}) and (\ref{equ:temp22}), we have that for all $i\in W'$,
\begin{equation}\nonumber
\begin{split}
&\quad\mu(A\cap T^{p_{1}(i)}U_{n}\cap\dots\cap T^{p_{d}(i)}A)-\mu(A)^{d+1}
\\&\leq (d+1)\mu(A_{n})+(d+1)\e_{n-1}-\mu(U_{n})^{d+1}
\\&\leq (d+1)a_{p+1}/\alpha+(d+1)\e_{n-1}-(\sum_{n=p+2}^{\infty}a_{p})^{d+1}.
\end{split}
\end{equation}
Since
\begin{equation}\nonumber
\begin{split}
&\quad(\sum_{n=p+2}^{\infty}a_{p})^{d+1}=(\sum_{n=p+2}^{\infty}\frac{a}{Sn^{c}})^{d+1}
\geq (\int_{p+1}^{\infty}\frac{a}{Sx^{c}}\,dx)^{d+1}=(\frac{da}{S(p+1)^{\frac{1}{d}}})^{d+1}
\\&=(da/S)^{d+1}\cdot (p+1)^{-\frac{d+1}{d}}>\frac{(d+1)a}{S\alpha}\cdot (p+1)^{-\frac{d+1}{d}}=(d+1)a_{p+1}/\alpha,
\end{split}
\end{equation}
and $\vert W'\vert/k\to 1$ as $k\to\infty$ (and so $n\to\infty$),
we have that the set of $i\in\mathbb{N}$ with
$$\mu(A\cap T^{p_{1}(i)}U_{n}\cap\dots\cap T^{p_{d}(i)}A)<\mu(A)^{d+1}$$
is of density 1 provided $\e_{n}\to 0$ sufficiently fast (since $\alpha p \leq n < \alpha ( p + 1 )$, $\e_{n}$ can be chosen to be dependent only on $a$ and $d$). This finishes the proof of Part (i) of Theorem \ref{dense-one-under}.

\subsection{Proof of Proposition \ref{prop:e0}}

We now prove Proposition \ref{prop:e0}.

\begin{proof}[Proof of Proposition \ref{prop:e0}]
	Assume the proposition is not true.  
	Then
	there exist $\e>0$, $A\in \mathcal{B}$ and $n \in \N$ such that for $m > n$, 
	\[
	\frac{1}{m} \sum_{i=0}^{m-1} \mu ( A \cap T^i A ) \leq \mu(A)^2 . 
	\]	
	%TMA added "the" on next line.
	By the Cauchy-Schwarz inequality, for any $N \in \N$, 
	\begin{align}
	\int_{X} \big( \frac{1}{N} \sum_{i=0}^{N-1} I_{T^i A}(x) \big)^2 d\mu &\geq \mu(A)^2 \label{eqn:cs}. 
	\end{align}
	The left-handside of (\ref{eqn:cs}) may be decomposed into the following four parts:
	\begin{align}
	\frac{1}{N^2} \sum_{i = 0}^{N-1} \sum_{j = 0}^{N-1} \mu (T^i A \cap T^j A) &= 
	\frac{1}{N^2} \sum_{i=0}^{n-1} \sum_{j=0}^{i} \mu(A\cap T^j A) \label{sq1} \\
	&+ \frac{1}{N^2} \sum_{i=n}^{N-1} \sum_{j=0}^{i} \mu(A\cap T^j A) \label{sq2} \\
	&+ \frac{1}{N^2} \sum_{i=1}^{n-1} \sum_{j=1}^{i} \mu(A\cap T^j A) \label{sq3} \\
	&+ \frac{1}{N^2} \sum_{i=n}^{N-1} \sum_{j=1}^{i} \mu(A\cap T^j A) \label{sq4} 
	\end{align}
	Note that quantity (\ref{sq4}) above satisfies:
	\begin{align}\nonumber
	\frac{1}{N^2} \sum_{i=n}^{N-1} \sum_{j=1}^{i} \mu(A\cap T^j A) &= 
	\frac{1}{N^2} \sum_{i=n}^{N-1} \sum_{j=0}^{i} \mu(A\cap T^j A) - \frac{1}{N^2} \sum_{i=n}^{N-1} \mu(A) \\ 
	&\nonumber\leq \frac{1}{N^2} \sum_{i=n}^{N-1} (i+1) \mu(A)^2 - (\frac{N-n}{N^2}) \mu(A)  \\ 
	&\nonumber= \frac{1}{N^2} \big( \frac{N(N+1)}{2} - \frac{n(n+1)}{2} \big) \mu(A)^2 - (\frac{N-n}{N^2}) \mu(A). 
	\end{align}
	Also, terms (\ref{sq1}) and (\ref{sq3}) satisfy:
	\begin{align}\nonumber
	(\ref{sq1}) + (\ref{sq3}) &\leq (\frac{n}{N})^2 \mu(A) . 
	\end{align}
	Term (\ref{sq2}) satisfies:
	\begin{align}\nonumber
	\frac{1}{N^2} \sum_{i=n}^{N-1} \sum_{j=0}^{i} \mu(A\cap T^j A) &\leq 
	\frac{1}{N^2} \sum_{i=n}^{N-1} (i + 1) \mu(A)^2 \\ 
	&\nonumber= \frac{1}{N^2} \big( \frac{N(N+1)}{2} - \frac{n(n+1)}{2} \big) \mu(A)^2 . 
	\end{align}
	Adding up the terms:
	\begin{align}\nonumber
	(\ref{sq1}) + (\ref{sq2}) + (\ref{sq3}) + (\ref{sq4}) &\leq 
	\frac{1}{N^2} \big( N(N+1) - n(n+1) \big) \mu(A)^2 + (\frac{n}{N})^2 \mu(A) \\ 
	&\nonumber- (\frac{N-n}{N^2}) \mu(A) \\ 
	&= \big( 1 + \frac{1}{N} - (\frac{n}{N})^2 - \frac{n}{N^2} \big) \mu(A)^2 
	- ( \frac{N - n - n^2}{N^2} ) \mu(A) \label{sqall}. 
	\end{align}
	However, for sufficiently large $N$, the value of (\ref{sqall}) is less than $\mu(A)^2$. 
	This contradicts (\ref{eqn:cs}). 
\end{proof}

%\begin{proof}[Proof of Proposition \ref{prop:e1}] For $i\in\mathbb{N}$, let $X_{i}=\mathbb{Z}/p_{i}\mathbb{Z}$ endowed with discrete $\sigma$-algebra $\mathcal{B}_{i}$ and uniform measure $\mu_{i}$, where $p_{i}$ denotes the $i$-th prime number. Let $T_{i}\colon X_{i}\to X_{i}, T_{i}x=x+1$. Let $X=\prod_{i=1}^{\infty}X_{i}$, $\mathcal{B}=\prod_{i=1}^{\infty}\mathcal{B}_{i}$, $\mu=\prod_{i=1}^{\infty}\mu_{i}$ and $T=\prod_{i=1}^{\infty}T_{i}$. Then $(X,\mathcal{B},\mu,T)$ is a measure preserving system. By the Chinese reminder theorem, it is easy to see that $T$ is ergodic.	
%\end{proof}

\section{Relative over-independence for weakly mixing extensions}\label{sec:4}

%if the set
%$$\Bigl\{n\in\mathbb{N}\colon\Bigl\vert\int_{X}  f\cdot T^{n}g\, d\mu-\int_{X}\E(f\vert\mathcal{C})\cdot \E(T^{n}g\vert\mathcal{C})\, d\mu\Bigr\vert<\epsilon\Bigr\}$$
%is of density 1 for all $f,g\in L^{\infty}(\mu)$ and $\e>0$.

%We say that $(X,\mathcal{B},\mu,T)$ is a \emph{relatively weakly mixing extension}
%of $(Y,\mathcal{C},\nu,T)$ if $(X,\mathcal{B},\mu,T)$ is an extension of $Y$ and
%$$\lim_{N\to\infty}\frac{1}{N}\sum_{n=0}^{N-1}\int_{X}\Bigl\vert \Bigr\vert$$

In this section, in order to be safe when dealing with some  measure-theoretical constructions, we will be assuming that measure spaces are \emph{regular}, that is, isomorphic to spaces of the form $(X,\mathcal{B},\mu)$, where $X$ is a compact metric space, $\mathcal{B}$ is the $\sigma$-algebra of Borel sets and $\mu$ is a probability measure on $X$.

Let $\pi\colon(X,\mathcal{B},\mu,T)\to (Y,\mathcal{C},\nu,T)$ be the factor map between two invertible probability measure preserving systems. For $\tau>0$, we say a set $C\in\mathcal{B}$ is a \emph{$\tau$-regular} if $\E(\bold{1}_{C}\vert \mathcal{C})(y)$ equals to either 1 or $\tau$ for $\nu$-a.e. $y\in Y$.

We say that $(X,\mathcal{B},\mu,T)$ is a \emph{relatively weakly mixing extension} of $(Y,\mathcal{C},\nu,T)$ if $X$ is an extension of $Y$ and for all $f,g\in L^{\infty}(\mu)$, we have that 
$$\lim_{N\to\infty}\frac{1}{N}\sum_{n=0}^{N-1}\int_{X}\Bigl\vert\mathbb{E}(f\cdot T^{n}g\vert\mathcal{C})-\mathbb{E}(f\vert\mathcal{C})T^{n}\mathbb{E}(g\vert\mathcal{C})\Bigr\vert\,d\mu=0,$$

%We say that the extension $X$ of $Y$ is \emph{reasonable} if for all $A\in\mathcal{C}$ and $0\leq \tau\leq 1$, there exists $B\in\mathcal{B}$ such that $\pi(B)=A$ and $\mu(B)=\tau\nu(A)$.

We have the following "over-independence" result for the relative case:
\begin{prop}\label{thm:rwm} 
	Let $(Y,\mathcal{C},\nu,T)$ be a free, invertible, ergodic probability measure preserving system, and let $(X,\mathcal{B},\mu,T)$ be a nontrivial relatively weakly mixing extension of $(Y,\mathcal{C},\nu,T)$.
	Let $p_{1},p_{2},\dots,p_{d}$ be non-constant integer-valued polynomials such that $p_{i}-p_{j}\not\equiv const$ for all $i\neq j$.
	Then there exists $A\in\mathcal{B}$ such that for $f=\bold{1}_{A}$, the set
	\begin{equation}\label{temp50}
	\begin{split}
	\Bigl\{n\in\mathbb{N}\colon\int_{X} \prod_{i=0}^{d} T^{p_{i}(n)}f\, d\mu>\int_{X}\prod_{i=0}^{d}  \E(T^{p_{i}(n)}f\vert\mathcal{C})\, d\mu\Bigr\}
	\end{split}
	\end{equation}
	is of density 1. In particular, for any $\tau>0$ such that $(d+1)\tau^{d}<1$,  $A$ can be chosen to be a $\tau$-regular set.
\end{prop}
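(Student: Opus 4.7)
The plan is to adapt the proof of Part (ii) of Theorem \ref{dense-one-under} to the relative setting, replacing absolute weak mixing with relative weak mixing and ordinary Rohlin towers with relatively structured ones that produce $\tau$-regular sets. I will construct $A = \bigcup_{k=1}^\infty A_k$ as a pairwise disjoint union where each $A_k$ is $\tau$-regular with $\mu(A_k) = a_k = a/(k(k+1))$, for a real number $a > 0$ chosen small enough that $(d+1)\tau^{d}$ plus a term controlled by $a$ is still strictly less than $1$. Then $C_k := \bigcup_{i=1}^k A_i$ will also be $\tau$-regular (its $\mathbb{E}(\,\cdot\,|\mathcal{C})$-value is $1$ over the union of the corresponding base pieces in $Y$ and $\tau$ elsewhere).

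First I would prove a relative analogue of Lemma \ref{roh2}: given a $\tau$-regular set $C \in \mathcal{B}$, integers $c_{1,n},\dots,c_{d,n}$ for $|n| \le N$, and $\epsilon > 0$, there exists a $\tau$-regular $A \in \mathcal{B}$ disjoint from $C$ with $\mu(A) = a'$ (prescribed) and
\[
\mu\bigl( (C \cup A) \cap T^{c_{1,n}} A \cap \cdots \cap T^{c_{d,n}} A \bigr) > (1-\epsilon)\mu(A)
\]
for all $|n| < N$. The construction is: take a Rohlin tower of height $M^2 \gg N$ in the base $(Y,\mathcal{C},\nu,T)$, lift it to $X$ via $\pi^{-1}$ so that $\{T^i\pi^{-1}(B)\}$ is a relative tower in $X$, then for each fiber over the base $B$ pick a measurable subset of relative measure $\tau$ (so $A$ becomes $\tau$-regular on the lifted tower minus $C$). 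Because the extension is nontrivial and $(Y,\mathcal{C},\nu,T)$ is free and ergodic, each fiber contains enough room to do this; the same "boundary" estimate of Lemma \ref{roh2} then gives the desired $(1-\epsilon)\mu(A)$ lower bound.

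Second, I invoke the relative polynomial weakly mixing theorem (the relative analogue of \cite{BPet}, which follows from the Furstenberg--Zimmer dichotomy combined with the polynomial Bergelson--Leibman machinery): since $(X,\mathcal{B},\mu,T)$ is a relatively weakly mixing extension of $(Y,\mathcal{C},\nu,T)$ and the $p_i$ and $p_i - p_j$ are non-constant, for any $f_0,\dots,f_d \in L^\infty(\mu)$ we have
\[
\lim_{N\to\infty}\frac{1}{N}\sum_{n=0}^{N-1}\Bigl|\int_X \prod_{i=0}^{d} T^{p_i(n)} f_i\, d\mu - \int_X \prod_{i=0}^{d} T^{p_i(n)} \mathbb{E}(f_i|\mathcal{C})\, d\mu\Bigr| = 0 .
\]
As in Lemma \ref{lem:wmbh}, this upgrades to a quantitative density-1 statement: for any $\delta>0$ there exists $N_0$ such that for $N \ge N_0$ the set of $n \le N$ on which the difference exceeds $\delta$ has cardinality at most $\delta N$.

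Third, I run the induction from the proof of Part (ii) of Theorem \ref{dense-one-under} verbatim, but using the relative Rohlin step above to build $A_{k+1}$ and using the relative polynomial weakly mixing bound to control $\int \prod T^{p_i(n)} \mathbf{1}_{C_{k+1}}\,d\mu - \int \prod T^{p_i(n)} \mathbb{E}(\mathbf{1}_{C_{k+1}}|\mathcal{C})\,d\mu$ on a density-1 set. For $n$ in the intersection of the good sets, the usual telescoping gives
\[
\mu\bigl(A \cap T^{p_1(n)}A \cap \cdots \cap T^{p_d(n)}A\bigr) \ge (1-\epsilon_k)\!\int\! \prod_{i=0}^{d} T^{p_i(n)} \mathbb{E}(\mathbf{1}_{C_k}|\mathcal{C})\,d\mu + \sum_{i\ge 2}(1-\epsilon_{k+i})\,a_{k+i}.
\]
Since $A \setminus C_k$ has measure $\le \sum_{i>k} a_i \to 0$ and $\mathbb{E}(\mathbf{1}_{C_k}|\mathcal{C}) \to \mathbb{E}(f|\mathcal{C})$ in $L^\infty$-norm up to a vanishing error, the right-hand side exceeds $\int \prod T^{p_i(n)}\mathbb{E}(f|\mathcal{C})\,d\mu$ by at least $a/(2(k+2))$ minus lower-order terms, provided $a$ is small enough and $\epsilon_k \to 0$ fast enough. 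Here the hypothesis $(d+1)\tau^d < 1$ enters exactly as $(d+1)\mu(A)^d < 1$ did in the absolute case, controlling the defect $\int \prod T^{p_i(n)}\mathbb{E}(f|\mathcal{C}) - \int \prod T^{p_i(n)}\mathbb{E}(\mathbf{1}_{C_k}|\mathcal{C})$ in terms of $a_{k+1}$.

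The main obstacle will be the relative Rohlin step: the $\tau$-regularity constraint couples the $X$-level fiber choice with the $Y$-level tower, so we must verify that (a) the piece carved out of each lifted fiber can be chosen to avoid the previously constructed $\tau$-regular set $C$ while keeping $\mathbb{E}(\mathbf{1}_A|\mathcal{C}) \in \{0,\tau,1\}$ a.e., and (b) the resulting $C \cup A$ is again $\tau$-regular so that the induction closes. Once this is in place, the rest is a straightforward combination of the relative polynomial weak mixing limit with the bookkeeping from the proof of Theorem \ref{dense-one-under} Part (ii).
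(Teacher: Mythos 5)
Your architecture is the same as the paper's: a relative Rohlin-type lemma producing $\tau$-regular blocks $A_k$ disjoint from the previously built set, the relative polynomial weak mixing limit (the paper quotes Proposition 2.3 of \cite{BL}) upgraded to a density-one statement, and the same telescoping bookkeeping, with $(d+1)\tau^{d}<1$ entering exactly where you say it does, namely to dominate the defect $\int\prod_{i}\E(T^{p_{i}(n)}f\vert\mathcal{C})-\int\prod_{i}\E(T^{p_{i}(n)}\bold{1}_{C_{k}}\vert\mathcal{C})\leq(d+1)\tau^{d}\mu(A\setminus C_{k})$ by the tail gain of order $a/(k+2)$. Two points in your relative Rohlin step are, however, not yet proofs.

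First, your justification for carving a relative-measure-$\tau$ subset out of each fiber --- ``because the extension is nontrivial and $(Y,\mathcal{C},\nu,T)$ is free and ergodic, each fiber contains enough room'' --- is insufficient. A nontrivial extension of a free ergodic system can perfectly well have purely atomic fibers (e.g.\ a finite group extension), and then no subset of relative fiber measure exactly $\tau$ exists for generic $\tau$. What is actually needed is that the disintegration $\mu=\int_{Y}\mu_{y}\,d\nu(y)$ has atomless fibers $\nu$-a.e., and proving this uses the \emph{relative weak mixing} hypothesis in an essential way: one decomposes $\mu_{y}$ into atomless and atomic parts, shows both pieces assemble into $T$-invariant measures, uses ergodicity of $\mu$ to conclude one of them vanishes, and rules out the atomic case by exhibiting a violation of $\frac1N\sum_{n}\int\vert\E(f\cdot T^{n}f\vert\mathcal{C})-\E(f\vert\mathcal{C})T^{n}\E(f\vert\mathcal{C})\vert\,d\mu\to0$ along a positive-density set of return times. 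This is Lemma \ref{lem:00} of the paper and is a genuine step, not a remark.

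Second, the conclusion of your relative Rohlin lemma is stated too strongly. Since $A$ must avoid $C$ while keeping $A\cup C$ $\tau$-regular, $A$ is forced to omit the levels of the lifted tower sitting over $\pi(C)$, a set of $\nu$-measure about $\mu(C)/\tau$; each of the $d+1$ shifted copies $T^{-p_{j}(n)}A$ then misses a portion of the tower of measure about $\mu(C)\mu(W)/\tau$, so the correct lower bound is $(1-\e)\bigl(1-\frac{(d+1)\mu(C)}{\tau}\bigr)\mu(A)$ rather than $(1-\e)\mu(A)$; the loss cannot be absorbed into $\e$ because $\mu(C)$ is bounded below along the induction. The argument survives because $\mu(C)\leq\tau a$ with $a$ small, so the extra factor is $1-(d+1)a$ and your final inequality still closes, but your stated lemma is false as written and the constant must be carried through the bookkeeping.
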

%\begin{rem}
	%We remark that the assumption that $X$ is a reasonable extension of $Y$ is necessary in this proposition. For example, if $X=Y$ and $\pi=id$, then $X$ is not a reasonable extension of $Y$, and the set in (\ref{temp50}) is empty for any $f\in L^{\infty}(\mu)$. 
%\end{rem}

In order to prove Proposition \ref{thm:rwm}, we need first some lemmas.

\begin{lem}\label{lem:00}
		Let $(Y,\mathcal{C},\nu,T)$ be a free, invertible, ergodic probability measure preserving system, and let $(X,\mathcal{B},\mu,T)$ be a nontrivial relatively weakly mixing extension of $(Y,\mathcal{C},\nu,T)$.  Let $\mu=\int_{Y}\mu_{y}\,d\nu(y)$ be the disintegration of $\mu$ with respect to $\nu$. Then $\mu_{y}$ is atomless for $\nu$-a.e. $y\in Y$. 
\end{lem}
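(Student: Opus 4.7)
The plan is to argue by contradiction, leveraging the Furstenberg--Zimmer characterization: relative weak mixing of $\pi\colon X\to Y$ is equivalent to the fibered product $(X\times_Y X,\mu\times_Y\mu, T\times T)$ being relatively ergodic over $Y$, i.e., its $(T\times T)$-invariant $\sigma$-algebra coincides (mod null sets) with $\pi'^{-1}(\mathcal{C})$, where $\pi'(x_1,x_2):=\pi(x_1)=\pi(x_2)$ and $\mu\times_Y\mu:=\int_Y\mu_y\times\mu_y\,d\nu(y)$.

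First I would introduce the diagonal $\Delta:=\{(x_1,x_2)\in X\times_Y X:x_1=x_2\}$. Regularity of the measure spaces allows us to realize $X$ as a compact metric space so that $\Delta$ is Borel, and $\Delta$ is $(T\times T)$-invariant because $T$ is a bimeasurable bijection. By the characterization above, $\mathbf{1}_\Delta=g\circ\pi'$ a.e.\ for some measurable $g\colon Y\to\{0,1\}$. Next I would analyze $g$ fiberwise: $g(y)=1$ iff $(\mu_y\times\mu_y)(\Delta)=1$ iff $\mu_y$ is a point mass, and $g(y)=0$ iff $(\mu_y\times\mu_y)(\Delta)=0$ iff $\mu_y$ is atomless. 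Thus $Y_1:=\{y:\mu_y\text{ is a point mass}\}$ agrees a.e.\ with $\{g=1\}$, and it is $T$-invariant since $\mu_{Ty}=T_*\mu_y$. Ergodicity of $(Y,\nu,T)$ then forces $\nu(Y_1)\in\{0,1\}$.

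To conclude, if $\nu(Y_1)=1$ then a measurable section $\sigma\colon Y\to X$ with $\mu_y=\delta_{\sigma(y)}$ (obtained by selecting the unique point of $\operatorname{supp}\mu_y$) would exhibit $\pi$ as an isomorphism, contradicting nontriviality of the extension. Hence $\nu(Y_1)=0$, so $g\equiv 0$ a.e., meaning $\mu_y$ is atomless for $\nu$-a.e.\ $y$. The main technical point will be to justify the Furstenberg--Zimmer equivalence starting from the paper's Ces\`aro $L^1$ definition of relative weak mixing; I would either cite Furstenberg's monograph, or verify it directly by noting that for $F=f\otimes f'$ and $G=g\otimes g'$ in $L^2(\mu\times_Y\mu)$, the inner product $\langle(T\times T)^n F,G\rangle_{\mu\times_Y\mu}$ equals $\int \mathbb{E}(T^nf\cdot g\mid\mathcal{C})\,\mathbb{E}(T^nf'\cdot g'\mid\mathcal{C})\,d\nu$, after which the relative weak mixing hypothesis drives the required Ces\`aro convergence to the projection onto $\pi'^{-1}(\mathcal{C})$.
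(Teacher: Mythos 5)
Your argument is correct, but it follows a genuinely different route from the paper. You pass to the relative product $X\times_Y X$ and invoke the Furstenberg--Zimmer characterization of relative weak mixing as relative ergodicity of $(X\times_Y X,\mu\times_Y\mu,T\times T)$ over $Y$; the $(T\times T)$-invariance of the diagonal then yields the clean dichotomy that $\nu$-a.e.\ fiber is either a point mass or atomless (your observation that $(\mu_y\times\mu_y)(\Delta)=\sum_x\mu_y(\{x\})^2\in\{0,1\}$ forces exactly this is right), and ergodicity of $Y$ plus nontriviality of the extension kills the point-mass alternative. The paper instead works directly with the stated Ces\`aro $L^1$ definition: it splits each $\mu_y$ into its atomless and atomic parts, shows the integrated measures $\mu_c$ and $\mu_d$ are both $T$-invariant, uses ergodicity of $\mu$ to conclude the fibers are a.e.\ all atomless or a.e.\ purely atomic, and in the atomic case selects one atom per fiber via the Measurable Choice Theorem and tests the relative weak mixing identity against the indicator of that section to reach a contradiction --- which is essentially a hands-on special case of your diagonal argument. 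What your approach buys is conceptual economy and a sharper intermediate statement (no fiber can be ``partly atomic''); what it costs is that the entire weight rests on the equivalence between the paper's functional definition and relative ergodicity of the fibered product. That equivalence is standard and your sketched computation of $\langle(T\times T)^nF,G\rangle$ for product functions is the right starting point, but to make the proof complete you must either give a precise citation or carry out the density/mean-ergodic-theorem argument showing that the invariant $\sigma$-algebra of $X\times_Y X$ is contained in $\pi'^{-1}(\mathcal{C})$; that step, not the diagonal bookkeeping, is where the real work lies.
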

\begin{proof}
	For $\nu$-a.e. $y\in Y$, there is a unique way to write $\mu_{y}=\mu_{y,c}+\mu_{y,d}$, where $\mu_{y,c}$ is an atomless measure and $\mu_{y,d}$ is an atomic measure (meaning that $\mu_{y,d}$ is supported on at most countable many atoms). Let $\mu_{c}=\int_{Y}\mu_{y,c}\,d\nu(y)$ and $\mu_{d}=\int_{Y}\mu_{y,d}\,d\nu(y)$. Since for all $A\in\mathcal{B}$, the map $y\to \mu_{y}(A)$ is measurable with respect to $\mathcal{C}$,  the maps $y\to \mu_{y,c}(A)$ and $y\to \mu_{y,d}(A)$ are also measurable with respect to $\mathcal{C}$ (see Theorems 2.1 and 2.12 in \cite{DF}). This implies that every $A\in\mathcal{B}$ is both $\mu_{c}$- and $\mu_{d}$-measurable (we caution the reader that $\mu_{c}$ and $\mu_{d}$ are not normalized, i.e. $\mu_{c}(X)$ and $\mu_{d}(X)$ may be not equal to 1). 
	
	\
	
	We claim that both $\mu_{c}$ and $\mu_{d}$ are $T$-invariant.
	Let $(T)_{\ast}$ denote the pushforward of measures under $T$. Since $\nu$ is $T$-invariant, $(T)_{\ast}\mu_{c}=\int_{Y}(T)_{\ast}\mu_{y,c}\,d\nu(y)=\int_{Y}(T)_{\ast}\mu_{T^{-1}y,c}\,d\nu(y):=\int_{Y}\mu'_{y,c}\,d\nu(y)$, where $\mu'_{y,c}:=(T)_{\ast}\mu_{T^{-1}y,c}$ is a measure supported on $\pi^{-1}(y)$. Since the pushforward of $T$ maps any atomless measure on $\mathcal{B}$ to an atomless measure, $\mu'_{y,c}$ is atomless. Similarly, since the pushforward of $T$ maps any atomic measure on $\mathcal{B}$ to an atomic measure, we have that $(T)_{\ast}\mu_{d}=\int_{Y}\mu'_{y,d}\,d\nu(y)$, where $\mu'_{y,d}:=(T)_{\ast}\mu_{T^{-1}y,d}$ is an atomic measure supported on $\pi^{-1}(y)$.

	Since $\mu$ is $T$-invariant, we have that $(T)_{\ast}\mu_{c}(A)+(T)_{\ast}\mu_{d}(A)=(T)_{\ast}\mu(A)=\mu(A)=\mu_{c}(A)+\mu_{d}(A)$ for all $A\in\mathcal{B}$. This implies that $\mu_{y,c}+\mu_{y,d}=\mu'_{y,c}+\mu'_{y,d}$ for $\nu$-a.e. $y\in Y$. By the uniqueness of the decomposition, we have that $\mu_{y,c}=\mu'_{y,c}=(T)_{\ast}\mu_{T^{-1}y,c}$, which implies that $$(T)_{\ast}\mu_{c}=\int_{Y}(T)_{\ast}\mu_{y,c}\,d\nu(y)=\int_{Y}\mu_{Ty,c}\,d\nu(y)=\int_{Y}\mu_{y,c}\,d\nu(y)=\mu_{y,c}.$$
	Similarly, $\mu_{d}$ is also a $T$-invariant measure. This proves the claim.
	
	\
	
	Since $(Y,\mathcal{C},\nu,T)$ is ergodic and $(X,\mathcal{B},\mu,T)$ is a weakly mixing extension of $(Y,\mathcal{C},\nu,T)$, $(X,\mathcal{B},\mu,T)$ is also ergodic.
	Since $\mu=\mu_{c}+\mu_{d}$ and all of the three measures are $T$-invariant and $\mu$ is ergodic, we have that $\mu_{c}=k\mu$ and $\mu_{d}=(1-k)\mu$ for some $0\leq k\leq 1$.
%	
%	{\color{red} Explanation (read and then remove): if $\mu_{c}=0$ or $\mu_{d}=0$, then the claim following by taking $k=0$ or 1. Now suppose that $\mu_{c}, \mu_{d}\neq 0$. Let $\mu'_{c}(A)=\frac{\mu_{c}(A)}{\mu_{c}(X)}$ $\mu'_{d}(A)=\frac{\mu_{d}(A)}{\mu_{c}(X)}$ for all $A\in\mathcal{B}$. Then $\mu'_{c},\mu'_{d}\in M(X)$, the space of all $T$-invariant probability measures on $(X,\mathcal{B},T)$. Since $\mu=\mu_{c}(X)\mu'_{c}+\mu_{d}(X)\mu'_{d}$ is a convex combination of $\mu'_{c}$ and $\mu'_{d}$ and $\mu$ is an ergodic measure, we have that $\mu'_{c}=\mu'_{d}=\mu$. Letting $k=\mu_{c}(X)$, we have that $\mu_{c}=k\mu$ and $\mu_{d}=(1-k)\mu$.}
	
	If $k\neq 0$, then $\mu_{y}=k^{-1}\mu_{y,c}$ is atomless for $\nu$-a.e. $y\in Y$ and we are done.

	Now we assume that $k=0$ and so
	$\mu_{d}=\mu$. Since $\mu_{y}=\mu_{y,d}$ is atomic for $\nu$-a.e. $y\in Y$ (as all the spaces considered in this paper are standard), every point in $\pi^{-1}(y)$ is an atom for $\mu_{y}$ for $\nu$-a.e. $y\in Y$. By the Measurable Choice Theorem (see,  for example, \cite{Au}), for $\nu$-a.e. $y\in Y$, there exists an atom $x_{y}\in X$ such that $\pi(x_{y})=y$ and the set $C:=\{x_{y}\in X\colon y\in Y\}$ is a measurable set.  Let $f=\bold{1}_{C}$.
	 Then $\mathbb{E}(f\vert \mathcal{C})(y)=\mu_{y}(\{x_{y}\})$ and $T^{n}\mathbb{E}(f\vert \mathcal{C})(y)=\mu_{T^{n}y}(\{x_{T^{n}y}\})$. Moreover, $\mathbb{E}(f\cdot T^{n}f\vert \mathcal{C})(y)$ equals to $\mu_{y}(\{x_{y}\})$ if $T^{n}x_{y}=x_{T^{n}y}$ and equals to 0 otherwise.

	Suppose that there exist $\e>0$ and  $B\in\mathcal{C}$ with $\nu(B)>0$ such that for all $y\in B$, $\e<\mu_{y}(\{x_{y}\})<1-\e$. Let $n\in\mathbb{N}$ be such that $\nu(B\cap T^{-n}B)>\nu(B)/2$.
	Then for all $y\in B\cap T^{-n}B$, the difference 
	$$\vert\mathbb{E}(f\cdot T^{n}f\vert \mathcal{C})(y)-\mathbb{E}(f\vert \mathcal{C})(y)\cdot T^{n}\mathbb{E}(f\vert \mathcal{C})(y)\vert$$
	is either $\mu_{y}(\{x_{y}\})\mu_{T^{n}y}(\{x_{T^{n}y}\})$ or $\mu_{y}(\{x_{y}\})(1-\mu_{T^{n}y}(\{x_{T^{n}y}\}))$, both of which are at least $\e^{2}$. This implies that for such $n\in\mathbb{N}$,
	\begin{equation}\nonumber
	\begin{split}
	&\quad\int_{Y}\Bigl\vert\mathbb{E}(f\cdot T^{n}f\vert \mathcal{C})(y)-\mathbb{E}(f\vert \mathcal{C})(y)\cdot T^{n}\mathbb{E}(f\vert \mathcal{C})(y)\Bigr\vert\,d\nu(y)
	\\&\geq \e^{2}\mu(B\cap T^{-n}B)>\e^{2}\nu(B)/2>0.
	\end{split}
	\end{equation}
	Since the set of $n\in\mathbb{N}$ such that $\nu(B\cap T^{-n}B)>\nu(B)/2$ has positive density in $\mathbb{N}$, this is a contradiction to the fact that $X$ is a weakly mixing extension of $Y$. Since $\mu_{y}(\{x_{y}\})>0$ for $\nu$-a.e. $y\in Y$, this contradiction implies that $\mu_{y}(\{x_{y}\})=1$ for  $\nu$-a.e. $y\in Y$. It follows that  for $\nu$-a.e. $y\in Y$, $\mu_{y}$ is supported on a single point, which contradicts to the fact that $X$ is a non-trivial extension of $Y$.
	We are done.
\end{proof}

\begin{lem}\label{roh3}
	Let $(Y,\mathcal{C},\nu,T)$ be a free, invertible, ergodic probability measure preserving system, and let $(X,\mathcal{B},\mu,T)$ be a nontrivial relatively weakly mixing extension of $(Y,\mathcal{C},\nu,T)$. For every $\tau$-regular set $C\in\mathcal{B}$ with $\mu(C)<\frac{\tau}{d+1}$, every $N\in\N,\e>0$ and every $0<a<1-\tau^{-1}\mu(C)$, there exists a $\tau$-regular set $A\in\mathcal{B}$ such that
	\begin{itemize}
		\item $\mu(A)=\tau a$;
		\item $A\cap C=\emptyset$;
		\item $A\cup C$ is $\tau$-regular;
		\item $\mu(A\cap T^{-p_{1}(n)}A\cap\dots\cap T^{-p_{d}(n)}A)>(1-\e)\Bigl(1-\frac{(d+1)\mu(C)}{\tau}\Bigr)\mu(A)$
		for all $0\leq n<N$.
	\end{itemize}
	In this case, we say $A$ is \emph{$(C,a,\e,N)$-$\tau$-good}.
\end{lem}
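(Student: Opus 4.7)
The plan is to adapt the Rokhlin-tower construction of Lemma \ref{roh2} to the relative setting, by building the tower on the factor $Y$ and lifting it to $X$ via a fixed ``fiber template'' of relative mass $\tau$. Write $Y_{C}:=\{y\in Y:\E(\bold{1}_{C}|\mathcal{C})(y)=\tau\}$, so that $\nu(Y_{C})=\mu(C)/\tau$; any $\tau$-regular $A$ which is disjoint from $C$ and has $A\cup C$ still $\tau$-regular is forced to live over $Y\setminus Y_{C}$. By Lemma \ref{lem:00} the fibers $\mu_{y}$ are atomless $\nu$-a.e., so a standard measurable-selection argument (e.g.\ via a Borel map $\phi\colon X\to[0,1]$ pushing each $\mu_{y}$ forward to Lebesgue) produces a measurable set $K\subseteq X$ with $\mu_{y}(K)=\tau$ for $\nu$-a.e.\ $y$. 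Since $(Y,\mathcal{C},\nu,T)$ is free and ergodic, Rokhlin's lemma furnishes a tower of height $M^{2}$ with base $B_{Y}$, of total measure exceeding $1-\epsilon'$, where $M$ is chosen much larger than $d\max_{n<N,\,1\le i\le d}|p_{i}(n)|$ and $\epsilon'$ is very small relative to $\epsilon$ and to $1-(d+1)\nu(Y_{C})$. The mean ergodic theorem then permits restricting to a subset $B_{Y}'\subseteq B_{Y}$ of nearly full $\nu(B_{Y})$-measure on which, for every $x\in B_{Y}'$, the cardinality $|\{0\le j<M^{2}:T^{j}x\in Y_{C}\}|$ lies within $\epsilon' M^{2}$ of $\nu(Y_{C})M^{2}$.

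Now choose $I\subseteq B_{Y}'$ and set
\[
A:=\Bigl(\bigcup_{j=0}^{M^{2}-1}T^{j}(K\cap\pi^{-1}(I))\Bigr)\cap \pi^{-1}(Y\setminus Y_{C}),
\]
calibrating $\nu(I)$ so that $\mu(A)=\tau a$; this is possible because $a<1-\nu(Y_{C})$ and $\mu(A)$ depends continuously and monotonically on $I$. Then $A\cap C=\emptyset$ by construction. On each fiber, $A\cap\pi^{-1}(T^{j}x)$ is either empty or equals $T^{j}(K\cap\pi^{-1}(x))$, and the $T$-equivariance of the disintegration gives the latter $\mu_{T^{j}x}$-mass exactly $\tau$; so $A$ is $\tau$-regular, and the supports of $A$ and $C$ over $Y$ being disjoint makes $A\cup C$ $\tau$-regular as well.

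For the intersection estimate, each point $z$ in the tower admits a unique representation $z=T^{j}y$ with $y\in\pi^{-1}(I)$ and $0\le j<M^{2}$. Setting $x=\pi(y)$, a direct check gives that $z\in A\cap\bigcap_{i=1}^{d}T^{-p_{i}(n)}A$ if and only if $y\in K$ and for every $i\in\{0,1,\dots,d\}$ (with $p_{0}\equiv 0$) we have $0\le j+p_{i}(n)<M^{2}$ and $T^{j+p_{i}(n)}x\notin Y_{C}$. Writing $S(x):=\{0\le j<M^{2}:T^{j}x\notin Y_{C}\}$, this yields
\[
\mu\Bigl(A\cap\bigcap_{i=1}^{d}T^{-p_{i}(n)}A\Bigr)=\tau\int_{I}\bigl|\bigcap_{i=0}^{d}(S(x)-p_{i}(n))\bigr|\,d\nu(x),
\]
and a union bound with $P:=\max_{n<N,i}|p_{i}(n)|$ gives $|\bigcap_{i=0}^{d}(S(x)-p_{i}(n))|\ge|S(x)|-d(M^{2}-|S(x)|)-dP$. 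Combined with $|S(x)|\ge M^{2}(1-\nu(Y_{C}))(1-\epsilon')$ on $B_{Y}'$ and the parallel identity $\mu(A)=\tau\int_{I}|S(x)|\,d\nu(x)$, one obtains
\[
\frac{\mu(A\cap\bigcap_{i=1}^{d}T^{-p_{i}(n)}A)}{\mu(A)}\ge\frac{1-(d+1)\nu(Y_{C})}{1-\nu(Y_{C})}\bigl(1-O(\epsilon')\bigr)\ge(1-\epsilon)\Bigl(1-\frac{(d+1)\mu(C)}{\tau}\Bigr),
\]
where the last inequality uses $(1-(d+1)x)/(1-x)\ge 1-(d+1)x$ whenever $x<1/(d+1)$, which holds by the hypothesis $\mu(C)<\tau/(d+1)$.

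The main delicate point is the coordinated bookkeeping of three independent ingredients: the measurable lift producing a $\tau$-regular $A$; the fact that, by using the tower over the single set $K\cap\pi^{-1}(I)$ as template, the $T^{p_{i}(n)}$-translates of $A$ agree fiberwise with $A$ on every tower level that stays inside $\{0,\dots,M^{2}-1\}$ and $Y\setminus Y_{C}$; and the ergodic-theoretic control of how the columns of the tower meet $Y_{C}$, which replaces the pure boundary estimate of Lemma \ref{roh2} and produces the predicted loss factor $(d+1)\mu(C)/\tau$.
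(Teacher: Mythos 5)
Your proof is correct and follows essentially the same route as the paper's: a Rokhlin tower on the free ergodic factor $Y$, ergodic-theorem control of how the columns meet the shadow $Y_{C}$ of $C$ (the paper packages this as the $(ML,\e)$-uniformity of $\pi(C)$ from Lemma \ref{lem:uniform}), a lift to a $\tau$-regular set using the atomlessness of the fibers from Lemma \ref{lem:00}, and a boundary/overlap count producing the loss factor $1-(d+1)\mu(C)/\tau$. The only differences are cosmetic: you use a single global fiber template $K$ and a direct fiberwise count, where the paper chooses a $\tau$-regular lift $J$ over the base and compares $A$ with the full column set $W$.
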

\begin{proof}
	Let $\mu=\int_{Y}\mu_{y}\,d\nu(y)$ be the disintegration of $\mu$ with respect to $\nu$. By Lemma \ref{lem:00}, $\mu_{y}$ is atomless for $\nu$-a.e. $y\in Y$. 
	
	We may assume without loss of generality that $p_{1}(n),\dots,p_{d}(n)$ are monotone for $n>0$.
	Denote $L=\sum_{i=1}^{d}\vert p_{i}(N)\vert$. Let $M>\lceil\frac{1}{\e}\rceil$ be such that $\pi(C)$ is $(ML,\e)$-uniform on $Y$, whose existence is guaranteed by Lemma \ref{lem:uniform}. Let $B$ be the base of a Rohlin tower on $Y$ of height $ML$ such that
	$$\nu(\cup_{i=0}^{ML-1}T^{-i}B)>1-\e.$$
	
	Let $\pi\colon X\to Y$ be the factor map.
	Let $I\subseteq B$ be such that the set
	$$A'=\{T^{-i}y\colon y\in I, 0\leq i<ML, T^{-i}y\notin \pi(C)\}$$
	is of measure $\nu(A')=a$ (this can be achieved since $Y$ ergodic and free and thus atomless). Denote
	$$I_{i}=\{y\in I\colon T^{-i}y\in \pi(C)\}.$$
	Let $J\in\mathcal{B}$ be an arbitrary $\tau$-regular set with $\pi(J)=I$ (we can do so since $\mu_{y}$ is atomless for $\nu$-a.e. $y\in Y$) and let $A_{i}=T^{-i}(J\backslash \pi^{-1}(I_{i})).$
	
	We claim that the set $A=\bigcup_{i=0}^{ML-1}A_{i}$ is $(C,a,\e,N)$-$\tau$-good.
	By the construction of $A_{i}$, $A_{i}\cap C=\emptyset$ and so $A\cap C=\emptyset$ (in fact we have that $\pi(A)\cap \pi(C)=\emptyset$).
	Since $J$ is $\tau$-regular, so are $J\backslash\pi^{-1}(I_{i})$ and $A_{i}$. Since $\pi(J)=I$, all of $A_{i}$ are pairwise disjoint. So $A$ is $\tau$-regular. Since $\pi(A)\cap \pi(C)=\emptyset$ and $C$ is $\tau$-regular, we have that $A\cup C$ is $\tau$-regular.
	
	Note that $$\nu(\pi(A))=\sum_{i=0}^{ML-1}\nu(\pi(A_{i}))=\nu(A').$$ Since $A$ is $\tau$-regular, $\mu(A)=\tau\cdot\nu(\pi(A))=\tau a$. 
	
	Let $W=\sum_{i=0}^{ML-1}T^{i}J$. We have that
	\begin{equation}\nonumber
	\begin{split}
	&\quad\mu(W\backslash A)=\tau\cdot\nu(\pi(W)\backslash \pi(A))=\tau\cdot\nu\Bigl((\bigcup_{i=0}^{ML-1}T^{-i}I)\backslash \pi(A)\Bigr)
	\\&=\tau\cdot\nu\Bigl((\bigcup_{i=0}^{ML-1}T^{-i}I\cap\pi(C)\Bigr)\geq(1-\e) \tau\nu(\pi(C))\nu(\bigcup_{i=0}^{ML-1}T^{-i}I)=(1-\e)\mu(C)\mu(W)/\tau,
	\end{split}
	\end{equation}
	where in the last inequality we used the fact that $\pi(C)$ is $(ML,\e)$-uniform. So
	$$\mu(A)\leq (1-\frac{(1-\e)\mu(C)}{\tau})\mu(W).$$
	For $0\leq n<N$,
	\begin{equation}\nonumber
	\begin{split}
	&\quad\mu(A\cap T^{-p_{1}(n)}A\cap\dots\cap T^{-p_{d}(n)}A)
	\\&>\mu(W\cap T^{-p_{1}(n)}W\cap\dots\cap T^{-p_{d}(n)}W)-\frac{(d+1)(1-\e)\mu(C)\mu(W)}{\tau}
	\\&>(\mu(W)-\frac{\sum_{i=1}^{d}\vert p_{i}(n)\vert}{ML})-\frac{(d+1)(1-\e)\mu(C)\mu(W)}{\tau}
	\\&>(1-\e)\mu(W)-\frac{(d+1)(1-\e)\mu(C)\mu(W)}{\tau}
	\\&=(1-\e)\Bigl(1-\frac{(d+1)\mu(C)}{\tau}\Bigr)\mu(W)
	\geq(1-\e)\Bigl(1-\frac{(d+1)\mu(C)}{\tau}\Bigr)\mu(A).
	\end{split}
	\end{equation}
\end{proof}

%TMA edited next line. Switched "Theorem" with "Proposition".
\begin{proof}[Proof of Proposition \ref{thm:rwm}]
	%By restricting ourselves on a particular ergodic component, we may assuming without loss of generality that $(Y,\mathcal{C},\nu,T)$ is ergodic and not periodic, and  $\pi^{-1}(y)$ is atom-less for all $y\in Y$. Then $(Y,\mathcal{C},\nu,T)$ is atom-less.
	
%	Let $\mu=\int_{Y}\mu_{y}\,d\nu(y)$ be the disintegration of $\mu$ with respect of $\nu$.
%	By Lemma \ref{lem:00},  $\mu_{y}$ is atomless for $\nu$-a.e. $y\in Y$, and so the conditions in Lemma \ref{roh3} are satisfied.

	Let $0<a_{i},\e_{i}<1, i\in\mathbb{N}$ to be chosen later. Since $X$ is a weakly mixing extension of $Y$, for every $f\in L^{\infty}(\mu)$, by Proposition 2.3 of \cite{BL}, we have that
		\begin{equation}\nonumber
		\begin{split}
		&\quad\lim_{N\to\infty}\frac{1}{N}\sum_{n=0}^{N-1}\Bigl\vert\int_{X} \prod_{i=0}^{d} T^{p_{i}(n)}f\, d\mu-\int_{X}\prod_{i=0}^{d}  \E(T^{p_{i}(n)}f\vert\mathcal{C})\, d\mu\Bigr\vert
		\\&=\lim_{N\to\infty}\frac{1}{N}\sum_{n=0}^{N-1}\Bigl\vert\int_{X} \prod_{i=0}^{d} \E(T^{p_{i}(n)}\vert\mathcal{C})\, d\mu-\int_{X}\prod_{i=0}^{d}  \E(T^{p_{i}(n)}f\vert\mathcal{C})\, d\mu\Bigr\vert
		\\&\leq\lim_{N\to\infty}\frac{1}{N}\sum_{n=0}^{N-1}\int_{X} \Bigl\vert\prod_{i=0}^{d} \E(T^{p_{i}(n)}f\vert\mathcal{C})\, d\mu-\int_{X}\prod_{i=0}^{d}  \E(T^{p_{i}(n)}f\vert\mathcal{C})\Bigr\vert\, d\mu=0.
		\end{split}
		\end{equation}	
	Since $$\Bigl\vert\int_{X}\prod_{i=0}^{d}  \E(T^{p_{i}(n)}f\vert\mathcal{C})\, d\mu\Bigl\vert\geq \Vert f\Vert_{\infty}^{d+1},$$ for every $\e>0$, the set 
	$$\Bigl\vert\Bigl\{n\leq N\colon\Bigl\vert\frac{\int_{X} \prod_{i=0}^{d} T^{p_{i}(n)}f\, d\mu}{\int_{X}\prod_{i=0}^{d}  \E(T^{p_{i}(n)}f\vert\mathcal{C})\, d\mu}-1\Bigr\vert>\e\Bigr\}\Bigr\vert<\e N$$
	when $N$ is sufficiently large.
	
	 Let $a_{i}=\frac{a}{i(i+1)}$, where $a<\frac{1}{d+1}$.
	Let $A_{1}$ be an arbitrary $\tau$-regular set with $\mu(A_{1})=\tau a_{1}$. The existence of $A_{1}$ is guaranteed if $\pi$ is non-trivial. Let $f_{1}=\bold{1}_{A_{1}}$. Let $N_{1}\in\N$ be such that the cardinality of
	$$E_{1,N}:=\Bigl\vert\Bigl\{n\leq N\colon\vert\frac{\int_{X} \prod_{i=0}^{d} T^{p_{i}(n)}f_{1}\, d\mu}{\int_{X}\prod_{i=0}^{d}  \E(T^{p_{i}(n)}f_{1}\vert\mathcal{C})\, d\mu}-1\vert>\e_{1}\Bigr\}\Bigr\vert$$ is at most $\e_{1} N$ for all $N>N_{1}$.
	
	Suppose $A_{i}, N_{i}$ are chosen and $f_{i}=\bold{1}_{A_{i}}$ for all $i\leq k$. Denote $C_{j}=\bigcup_{i=1}^{j}A_{i}$ and $g_{j}=\bold{1}_{C_{j}}=\sum_{i=1}^{j}f_{i}$. Let $A_{k+1}$ be a $(C_{k},a_{k+1},\e_{k},N_{k})$-$\tau$-good set. The existence of $A_{k+1}$ is guaranteed by Lemma \ref{roh3} since $0<\sum_{i=1}^{\infty}\tau a_{i}=\frac{\tau}{d+1}$. Let $N_{k+1}>N_{k}$ be such that the cardinality of
	$$E_{k+1,N}:=\Bigl\vert\Bigl\{n\leq N\colon\vert\frac{\int_{X} \prod_{i=0}^{d} T^{p_{i}(n)}f_{k+1}\, d\mu}{\int_{X}\prod_{i=0}^{d}  \E(T^{p_{i}(n)}f_{k+1}\vert\mathcal{C})\, d\mu}-1\vert>\e_{k+1}\Bigr\}\Bigr\vert$$ is at most $\e_{k+1} N$ for all $N>N_{k+1}$.
	
	Let $A=\bigcup_{i=1}^{\infty}A_{i}$. We claim that $g=\bold{1}_{A}=\lim_{i\to\infty}g_{i}=\sum_{i=1}^{\infty}f_{i}$ satisfies the condition of the theorem. Suppose that $N_{k}\leq N<N_{k+1}$ for some $k\in\mathbb{N}$ with $(d+1)\tau^{d}<\frac{k+1}{k+2}$. If $n\notin E_{k,N}$, then
	\begin{equation}\nonumber
	\begin{split}
	&\quad\int_{X} \prod_{i=0}^{d} T^{p_{i}(n)}g\, d\mu
	=\mu(A\cap T^{-p_{1}(n)}A\cap\dots\cap T^{-p_{d}(n)}A)
	\\&\geq \mu(C_{k}\cap T^{-p_{1}(n)}C_{k}\cap\dots\cap T^{-p_{d}(n)}C_{k})+\sum_{i=2}^{\infty}\mu(A_{k+i}\cap T^{-p_{1}(n)}A_{k+i}\cap\dots\cap T^{-p_{d}(n)}A_{k+i})
	\\&>(1-\e_{k})\int_{X}\prod_{i=0}^{d}  \E(T^{p_{i}(n)}g_{k}\vert\mathcal{C})\, d\mu+\sum_{i=2}^{\infty}(1-\e_{k+i})\Bigl(1-\frac{(d+1)\mu(A)}{\tau}\Bigr)\mu(A_{k+i})
	\\&>(1-\e_{k})\int_{X}\prod_{i=0}^{d}  \E(T^{p_{i}(n)}g_{k}\vert\mathcal{C})\, d\mu+(1-\e_{k+1})\Bigl(1-(d+1)a\Bigr)\sum_{i=2}^{\infty}\mu(A_{k+i})
	\\&=(1-\e_{k})\int_{X}\prod_{i=0}^{d}  \E(T^{p_{i}(n)}g_{k}\vert\mathcal{C})\, d\mu+(1-\e_{k+1})\Bigl(1-(d+1)a\Bigr)\frac{a}{k+2}.
	\end{split}
	\end{equation}
	
	By the constructions, $C_{k+1}$ is $\tau$-regular and so
	$0\leq \E(g_{k+1}\vert Y)(y), \E(g\vert Y)(y)\leq \tau$ for $\nu$-a.e. $y\in Y$. So
	\begin{equation}\nonumber
	\begin{split}
	&\quad\int_{X}\prod_{i=0}^{d}  \E(T^{p_{i}(n)}g\vert\mathcal{C})\, d\mu-\int_{X}\prod_{i=0}^{d}  \E(T^{p_{i}(n)}g_{k}\vert\mathcal{C})\, d\mu
	\\&\leq\sum_{j=0}^{d}\int_{X}\E(T^{p_{j}(n)}(g-g_{k})\vert\mathcal{C})\prod_{i\neq j}  \E(T^{p_{i}(n)}g\vert\mathcal{C})\, d\mu
	\\&\leq\sum_{j=0}^{d}\tau^{d}\int_{X}\E(g-g_{k}\vert\mathcal{C})\, d\mu
	\\&=(d+1)\tau^{d}\int_{X}(g-g_{k})\, d\mu
	=(d+1)\tau^{d}\cdot\frac{a}{k+1}           
	\end{split}
	\end{equation}
	So 
	\begin{equation}\nonumber
	\begin{split}
	&\quad\int_{X} \prod_{i=0}^{d} T^{p_{i}(n)}g\, d\mu-\int_{X}\prod_{i=0}^{d}  \E(T^{p_{i}(n)}g\vert\mathcal{C})\, d\mu
	\\&\geq(1-\e_{k+1})\Bigl(1-(d+1)a\Bigr)\frac{a}{k+2}-(d+1)\tau^{d}\cdot\frac{a}{k+1} -\e_{k}\int_{X}\prod_{i=0}^{d}  \E(T^{p_{i}(n)}g\vert\mathcal{C})\, d\mu.
	\end{split}
	\end{equation}
	The right hand side is positive if we pick $a$ sufficiently small, $\e_{k}$ decreasing to 0 sufficiently fast, and $k$ large enough (since $(d+1)\tau^{d}<\frac{k+1}{k+2}$).

	Since $\vert E_{k,N}\vert<\e_{k}N$ and $\e_{k}\to 0$, the set
	$$\Bigl\{n\in\mathbb{N}\colon\int_{X} \prod_{i=0}^{d} T^{p_{i}(n)}g\, d\mu>\int_{X}\prod_{i=0}^{d}  \E(T^{p_{i}(n)}g\vert\mathcal{C})\, d\mu\Bigr\}$$
	is of density 1.   
\end{proof}

\section{Over-independence for amenable actions}\label{sec:am}
In this section we address the over-independence phenomenon for measure preserving actions of countable amenable groups. 

A countable group $G$ is \emph{amenable} if there exists a sequence of finite sets $(F_{N})_{N\in\mathbb{N}}$ (called a \emph{F\o lner sequence}) such that for any finite set $K\subseteq G$, we have that 
$$\lim_{N\to\infty}\frac{\vert KF_{N}\Delta F_{N}\vert}{\vert F_{N}\vert}=0.$$
%Throughout this section we assume that $G$ is countable.

%Let $G$ be a countable amenable group and $(F_{N})_{N\in\mathbb{N}}$ be a F\o lner sequence of $G$. Let $(a_{g})_{g\in G}$ be a sequence of real numbers. We say that $\lim_{g\to\infty G}a_{g}=a$ if for all $\epsilon>0$, the set 
%$$\{g\in G\colon \vert a_{g}-a\vert>\epsilon\}$$
%is finite. %For a subset $K$ of $G$, the (upper) density of $K$ is defined as $$\limsup_{N\to\infty}\frac{\vert K\cap F_{N}\vert}{\vert F_{N}\vert}.$$

Let $G$ be a countable amenable group and $(X,\mathcal{B},\mu,(T_{g})_{g\in G})$ be a probability measure preserving system. 
\begin{itemize}
	\item $(X,\mathcal{B},\mu,(T_{g})_{g\in G})$ is \emph{mixing} if for all $A\in\mathcal{B}$, we have $$\lim_{g\to\infty}\mu(A\cap T_{g}A)=\mu(A)^{2}$$
	(meaning that for any $\epsilon>0$, the set 
	$\{g\in G\colon \vert \mu(A\cap T_{g}A)-\mu(A)^{2}\vert>\epsilon\}$
	is finite);
	\item $(X,\mathcal{B},\mu,(T_{g})_{g\in G})$ is \emph{weakly mixing} if for any $A\in\mathcal{B}$ and any F\o lner sequence $(F_{N})_{N\in\mathbb{N}}$, $$\lim_{N\to\infty}\frac{1}{\vert F_{N}\vert}\sum_{g\in F_{N}}\vert\mu(A\cap T_{g}A)-\mu(A)^{2}\vert=0;$$
	\item $(X,\mathcal{B},\mu,(T_{g})_{g\in G})$ is \emph{ergodic} if for any $A\in\mathcal{B}$ and any F\o lner sequence $(F_{N})_{N\in\mathbb{N}}$, we have $$\lim_{N\to\infty}\frac{1}{\vert F_{N}\vert}\sum_{g\in F_{N}}\mu(A\cap T_{g}A)=\mu(A)^{2}.$$
\end{itemize}

We have the following results:

\begin{thm}[Over-independence]\label{thm:ma} Let $G$ be a countable amenable group and $(X,\mathcal{B},\mu,(T_{g})_{g\in G})$ be a mixing probability measure preserving system. Then
	there exists $A\in\mathcal{B}$ such that 
	$$\mu(A\cap T_{g}A)>\mu(A)^{2}$$
	for all $g\in G$.\footnote{A generalization of Theorem \ref{thm:hm}  also holds for actions of amenable groups which are mixing of order $d$. We omit the proof.}
\end{thm}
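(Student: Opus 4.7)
The plan is to adapt the proof of Theorem \ref{thm:hm} (in the $d=1$ case) from $\mathbb{Z}$-actions to countable amenable group actions by substituting the classical Rokhlin tower from Lemma \ref{roh2} with its amenable analogue. First, I would reduce to the free case: a mixing action of $G$ has finite stabilizer $H := \{g \in G : T_g = \mathrm{id}\}$, for otherwise any sequence of distinct $h_n \in H$ tends to infinity in $G$ while $\mu(A \cap T_{h_n} A) = \mu(A)$ for every $A$, contradicting mixing when $\mu(A) \in (0,1)$; and for $g \in H$ the desired inequality $\mu(A \cap T_g A) = \mu(A) > \mu(A)^2$ is automatic as soon as $\mu(A) \in (0,1)$. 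Factoring through the free mixing action of the amenable quotient $G/H$, we may therefore assume the $G$-action is free (and ergodic, since mixing implies ergodicity).

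Enumerate $G = \{g_1, g_2, \ldots\}$ with $g_1 = e$. Fix small $a > 0$, set $a_i = a/(i(i+1))$, and let $(\epsilon_i)$ decrease to $0$ fast enough. Inductively construct disjoint sets $A_i \in \mathcal{B}$ with $\mu(A_i) = a_i$, writing $C_k = \bigcup_{i \leq k} A_i$. Once $C_k$ is chosen, mixing produces a finite set $K_k \supseteq \{g_1, \ldots, g_k\}$ such that $|\mu(C_k \cap T_g C_k) - \mu(C_k)^2| < \epsilon_k \mu(C_k)^2$ for every $g \notin K_k$. The inductive step then demands an $A_{k+1}$ disjoint from $C_k$, of measure $a_{k+1}$, satisfying
\[
\mu((C_k \cup A_{k+1}) \cap T_g A_{k+1}) > (1 - \epsilon_k) \mu(A_{k+1}) \quad \text{for all } g \in K_k.
\]
This is the amenable analogue of Lemma \ref{roh2}, supplied by the Ornstein--Weiss Rokhlin lemma for free ergodic actions of countable amenable groups: for any $\delta > 0$ there exist a $(K_k, \delta)$-invariant F\o lner set $F \subset G$ and a base $B \in \mathcal{B}$ with $\{T_f B : f \in F\}$ pairwise disjoint and $\mu\bigl(\bigsqcup_{f \in F} T_f B\bigr) > 1 - \delta$. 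One then chooses $I \subset B$ so that $A_{k+1} := \{T_f x : f \in F,\, x \in I,\, T_f x \notin C_k\}$ has measure exactly $a_{k+1}$ (possible because freeness plus ergodicity make $\mu$ atomless); for $g \in K_k$, near $K_k$-invariance of $F$ ensures that at most a $\delta$-fraction of $A_{k+1}$ is shifted outside the tower by $T_g$, and the rest lies inside $C_k \cup A_{k+1}$, so taking $\delta$ small relative to $\epsilon_k a_{k+1}$ yields the estimate.

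Finally, setting $A := \bigcup_{i=1}^\infty A_i$, the inequality $\mu(A \cap T_g A) > \mu(A)^2$ for all $g \in G$ is verified by the computation at the end of the proof of Theorem \ref{thm:hm}: if $g \in K_{k+1} \setminus K_k$ (the case $g \in K_1$ is analogous), split $A = C_k \sqcup \bigsqcup_{i \geq 1} A_{k+i}$ and bound
\[
\mu(A \cap T_g A) \geq \mu(C_k \cap T_g C_k) + \sum_{i \geq 2} \mu(C_{k+i} \cap T_g A_{k+i}) > (1-\epsilon_k)\bigl(a - \tfrac{a}{k+1}\bigr)^2 + (1-\epsilon_{k+2})\tfrac{a}{k+2},
\]
which exceeds $a^2 = \mu(A)^2$ provided $a$ is small and $\epsilon_i \downarrow 0$ sufficiently fast. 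The main obstacle is really the production of a single-tile Rokhlin tower for amenable actions; once Ornstein--Weiss (or an adequate quasi-tiling variant) is in hand the argument is a word-for-word translation of the $\mathbb{Z}$-case treated in Theorem \ref{thm:hm}, so no genuinely new ideas are required.
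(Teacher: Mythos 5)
Your overall strategy is exactly the paper's: run the inductive construction of Theorem \ref{thm:hm} (mixing supplies the finite exceptional sets $K_k$, an amenable Rohlin-type lemma supplies $A_{k+1}$ over-correlating with $C_k\cup A_{k+1}$ on $K_k$), and your reduction to the free case via the finite normal kernel $H=\{g:T_g=\mathrm{id}\}$ is a worthwhile addition, since for amenable $G$ mixing only forces $H$ to be finite rather than trivial, a point the paper glosses over when it invokes a lemma requiring freeness.

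The gap is the Rohlin lemma itself. You invoke ``the Ornstein--Weiss Rokhlin lemma'' in the form: for any finite $K$ and $\delta>0$ there is a single $(K,\delta)$-invariant shape $F$ and a base $B$ with $\{T_fB\}_{f\in F}$ \emph{pairwise disjoint} and $\mu\bigl(\bigcup_{f\in F}T_fB\bigr)>1-\delta$. That is not what \cite{OW} proves. The Ornstein--Weiss machinery (Theorems \ref{thm:1.6} and \ref{thm:2.5} as quoted in the paper) produces finitely many shapes $G_1,\dots,G_k$ and, for each, several towers $R_i^j=G_iV_i^j$ that are only \emph{$\epsilon$-disjoint} and jointly cover $1-\epsilon$ of the space; a single exactly disjoint tower with a prescribed invariance and coverage $1-\delta$ is a substantially stronger statement that is not available from that reference. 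The paper's Lemma \ref{roh4} is precisely the bridge you are missing: it builds $A_{k+1}$ as a union $\bigl(\bigcup_{i,j}G_iI_i^j\bigr)\setminus C_k$ over \emph{all} the quasi-towers, and controls the loss coming from the $F_N$-boundaries of the several shapes $G_i$ together with the overlaps permitted by $\epsilon$-disjointness (this is where the crude $\mu(A')\le 10\epsilon$ estimate enters). You flag this as ``the main obstacle'' and then assume it away; since the paper itself identifies the passage from the classical single tower to the Ornstein--Weiss quasi-tower theorem as the main novelty of the amenable section, this step needs to be carried out (or the single-tower lemma needs a correct reference and proof), not merely asserted. The rest of your argument, including the final summation bound $(1-\epsilon_k)\bigl(a-\tfrac{a}{k+1}\bigr)^2+(1-\epsilon_{k+2})\tfrac{a}{k+2}>a^2$, matches the paper and is fine.
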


\begin{thm}[Density-1 over-independence]\label{thm:wa} Let $G$ be a countable amenable group and $(F_{N})_{N\in\mathbb{N}}$ be a F\o lner sequence of $G$. Let $(X,\mathcal{B},\mu,(T_{g})_{g\in G})$ be a  weakly mixing probability measure preserving system. Then
	there exists $A\in\mathcal{B}$ such that the set
	$$\{g\in G\colon\mu(A\cap T_{g}A)>\mu(A)^{2}\}$$
	is of density 1.
\end{thm}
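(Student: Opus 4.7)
The plan is to adapt the proof of Theorem \ref{dense-one-under} Part (ii) to the amenable setting, using two main ingredients. The first is an amenable analog of Lemma \ref{roh2}: for a free, ergodic $G$-action, any $C\in\mathcal{B}$, any finite $K\subseteq G$, any $\epsilon>0$ and any $0<a<1-\mu(C)$, there exists $A\in\mathcal{B}$ with $\mu(A)=a$, $A\cap C=\emptyset$, and $\mu((C\cup A)\cap T_g A)>(1-\epsilon)\mu(A)$ for every $g\in K$. This is obtained from the Ornstein--Weiss Rokhlin / quasi-tiling lemma: select a F\o lner set $F\subseteq G$ that is $(K,\delta)$-invariant for small $\delta$, and a measurable base $B$ such that $\{T_f B:f\in F\}$ is disjoint with $\mu(\bigsqcup_{f\in F}T_f B)>1-\epsilon$; then build $A$ from translates $T_f B$ with $f$ in the $K$-interior of $F$ (i.e.\ $Kf\subseteq F$), while avoiding $C$. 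The second ingredient is the definition of weak mixing itself, which directly gives $|\{g\in F_N:|\mu(B\cap T_g B)-\mu(B)^2|>\epsilon\}|<\epsilon|F_N|$ for every $B\in\mathcal{B}$, $\epsilon>0$ and all sufficiently large $N$.

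With these in hand, fix $a_i=a/(i(i+1))$ with $a$ small and $\epsilon_i\downarrow 0$ sufficiently fast. Inductively build pairwise disjoint $A_k\in\mathcal{B}$ with $\mu(A_k)=a_k$ and increasing integers $N_1<N_2<\dots$, and set $K_k:=\bigcup_{M=1}^{N_k}F_M$, a finite subset of $G$. At step $k+1$, apply the amenable Rokhlin lemma with $C=C_k:=\bigcup_{i=1}^k A_i$ and finite set $K_k$ to produce $A_{k+1}$ with $\mu((C_k\cup A_{k+1})\cap T_g A_{k+1})>(1-\epsilon_k)\mu(A_{k+1})$ for all $g\in K_k$; then use weak mixing to pick $N_{k+1}>N_k$ so that
\[
E_{k+1,N}:=\{g\in F_N:|\mu(C_{k+1}\cap T_g C_{k+1})-\mu(C_{k+1})^2|>\epsilon_{k+1}\mu(C_{k+1})^2\}
\]
has cardinality at most $\epsilon_{k+1}|F_N|$ for all $N\geq N_{k+1}$.

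Set $A=\bigcup_i A_i$, so $\mu(A)=a$. For each $N$ with $N_k\leq N<N_{k+1}$ and each $g\in F_N\setminus E_{k,N}$, note that $g\in F_N\subseteq K_{k+j}$ for every $j\geq 1$, so the Rokhlin estimate from step $k+j$ applies to $A_{k+j}$ at $g$ whenever $j\geq 2$. Decomposing $A=C_k\sqcup\bigsqcup_{i\geq 1}A_{k+i}$ and using disjointness of the translates $T_g A_{k+i}$,
\[
\mu(A\cap T_g A)\geq\mu(C_k\cap T_g C_k)+\sum_{i\geq 2}\mu(C_{k+i}\cap T_g A_{k+i})>(1-\epsilon_k)\mu(C_k)^2+\sum_{i\geq 2}(1-\epsilon_{k+i})\mu(A_{k+i}),
\]
and the identical arithmetic as in the proof of Theorem \ref{dense-one-under}(ii) shows the right-hand side exceeds $a^2=\mu(A)^2$ when $a$ is small and $\epsilon_i\to 0$ fast enough. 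Since $|E_{k,N}|/|F_N|\leq\epsilon_k\to 0$ as $N\to\infty$ (and hence $k\to\infty$), the set $\{g\in G:\mu(A\cap T_g A)>\mu(A)^2\}$ has density $1$ along $(F_N)$.

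The main obstacle is establishing the amenable Rokhlin lemma in the precise form above, which rests on the Ornstein--Weiss quasi-tiling theorem for free ergodic actions. A secondary point is that the theorem as stated does not explicitly assume freeness; however, weak mixing forces the kernel $\{g\in G:T_g=\mathrm{id}\}$ to have density zero in $G$ (otherwise $\mu(A\cap T_g A)=\mu(A)\neq\mu(A)^2$ on a positive-density set for any $0<\mu(A)<1$, contradicting the $L^1$-density convergence defining weak mixing). Passing to the amenable free quotient $G/\ker$ and pulling back handles this, after which the argument reduces to the scheme outlined above.
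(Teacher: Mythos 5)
Your proposal is correct and follows exactly the route the paper intends: the paper in fact omits the proof of Theorem \ref{thm:wa}, stating only that it is obtained by combining the Ornstein--Weiss tower lemma (its Lemma \ref{roh4}) with the inductive scheme of the proof of Theorem \ref{dense-one-under} Part (ii), which is precisely what you carry out. Your closing remark about possible non-freeness of a weakly mixing action is a point the paper glosses over, and your fix does not alter the main argument.
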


We say that a system  $(X,\mathcal{B},\mu,(T_{g})_{g\in G})$ is \emph{free} if $T_{g}$ is not the identity map for all $g\in G, g\neq e_{G}$.
For Ces\`aro over-independence we have:  

\begin{thm}[Ces\`aro over-independence]\label{thm:ea}  Let $G$ be a countable amenable group and $(F_{N})_{N\in\mathbb{N}}$ be a F\o lner sequence of $G$. Let $(X,\mathcal{B},\mu,(T_{g})_{g\in G})$ be an ergodic and free probability measure preserving system. Then
	there exists $A\in\mathcal{B}$ such that 
	$$\frac{1}{\vert F_{N}\vert}\sum_{g\in F_{N}}\mu(A\cap T_{g}A)>\mu(A)^{2}$$
	for all $N\in\mathbb{N}$.
\end{thm}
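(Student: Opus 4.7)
The plan is to mirror the proof of Theorem \ref{thm:e00}, replacing the Rokhlin tower for a single transformation with an Ornstein-Weiss tower for the amenable action, and substituting the ergodic theorem for its amenable analogue along the given Følner sequence $(F_N)_{N \in \mathbb{N}}$.

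The first step is to establish an amenable analogue of Lemma \ref{roh2}: for every $C \in \mathcal{B}$, every finite set $K \subset G$, every $\epsilon > 0$, and every $0 < a < 1 - \mu(C)$, there exists $A \in \mathcal{B}$ with $\mu(A) = a$, $A \cap C = \emptyset$, and $\mu((C \cup A) \cap T_g A) > (1-\epsilon)\mu(A)$ for every $g \in K$. I would prove this by invoking the Ornstein-Weiss Rokhlin lemma for free ergodic amenable actions: pick a Følner shape $F$ that is $(K,\delta)$-invariant with $\delta$ small in terms of $\epsilon$, together with a measurable base $B$ such that $\{T_g B\}_{g \in F}$ is disjoint and $\mu\bigl(\bigcup_{g \in F} T_g B\bigr) > 1 - \delta$. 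Select $A$ as the union of those tower levels disjoint from $C$, tuning the base via atomlessness (inherited from freeness and ergodicity) to achieve $\mu(A) = a$ exactly; the Følner invariance of $F$ then guarantees that $T_g A \setminus (C \cup A)$ has measure at most $\epsilon\mu(A)$ for $g \in K$.

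Next, I would inductively build pairwise disjoint sets $A_i$ with $\mu(A_i) = a_i := a/(i(i+1))$ for a small $a > 0$, together with indices $N_i \in \mathbb{N}$ and tolerances $\epsilon_i \searrow 0$. At stage $k+1$, apply the Rokhlin-style lemma above with $C = C_k := \bigcup_{i \leq k} A_i$ and $K = \bigcup_{N \leq L_k} F_N$, for an $L_k$ chosen large in terms of $N_k$, producing $A_{k+1}$ satisfying $\mu((C_k \cup A_{k+1}) \cap T_g A_{k+1}) > (1 - \epsilon_k)\mu(A_{k+1})$ for all $g \in K$. Then the amenable mean ergodic theorem --- which gives $\liminf_N |F_N|^{-1}\sum_{g \in F_N}\mu(B \cap T_g B) \geq \mu(B)^2$ for any $B \in \mathcal{B}$ by Jensen applied to the projection onto invariant functions --- permits a choice of $N_{k+1}$ with $N_k < N_{k+1} < L_k$ for which
\[
\frac{1}{|F_N|}\sum_{g \in F_N}\mu(C_{k+1} \cap T_g C_{k+1}) > (1 - \epsilon_{k+1})\mu(C_{k+1})^2 \quad \text{for all } N > N_{k+1}.
\]

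Finally, set $A := \bigcup_{i \geq 1} A_i$. Given $N \in \mathbb{N}$, locate $k$ with $N_k \leq N < N_{k+1}$, estimate
\[
\mu(A \cap T_g A) \geq \mu(C_k \cap T_g C_k) + \sum_{i \geq 2}\mu(C_{k+i} \cap T_g A_{k+i}),
\]
and average over $g \in F_N$. Since $F_N \subset K_{k+i}$ for every $i \geq 1$ by the choice of the $L_j$'s, the tower estimate gives $\mu(C_{k+i} \cap T_g A_{k+i}) > (1-\epsilon_{k+i})\mu(A_{k+i})$, while the ergodic estimate controls the first term, yielding
\[
\frac{1}{|F_N|}\sum_{g \in F_N}\mu(A \cap T_g A) > (1-\epsilon_k)(a_1 + \cdots + a_k)^2 + \sum_{i \geq 2}(1-\epsilon_{k+i})a_{k+i} > a^2 = \mu(A)^2,
\]
once $a$ is small and $\epsilon_i \searrow 0$ quickly, by the same elementary inequality used in the proof of Theorem \ref{thm:e00}. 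The small-$N$ case ($N < N_1$) is handled by enlarging $L_0$ so that $K_1 \supseteq F_N$ for all $N \leq N_1$, and bounding $\mu(A \cap T_g A_k) > (1-\epsilon_1)\mu(A_k)$ directly from the tower, giving $(1-\epsilon_1)a/2 > a^2$. The main technical obstacle is the stage-$k+1$ Rokhlin construction itself: amenable actions admit only quasi-towers, so Ornstein-Weiss machinery is required, and one must verify that the Følner shape can be chosen $(K,\delta)$-invariant while still permitting atomless adjustment of the base to the prescribed measure $a_{k+1}$. Once that lemma is in place, the rest is a faithful transcription of the argument in Section \ref{sec:2}.
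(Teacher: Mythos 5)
Your proposal is correct and follows exactly the route the paper intends: the authors explicitly leave the proof of Theorem \ref{thm:ea} to the reader as an adaptation of the argument for Theorem \ref{thm:e0}, with the Ornstein--Weiss quasi-tower machinery (their Lemma \ref{roh4} is precisely your amenable Rokhlin-type lemma) replacing the classical Rohlin lemma and the amenable mean ergodic theorem replacing the ordinary one. The only cosmetic slip is the requirement $N_{k+1} < L_k$, which cannot be guaranteed a priori (since $N_{k+1}$ is determined by the ergodic theorem only after $A_{k+1}$ is built) and is also not needed: the verification for $N_k \leq N < N_{k+1}$ only uses the terms with $i \geq 2$, for which $F_N \subseteq K_{k+i}$ follows from $L_{k+i-1} \geq N_{k+i-1} \geq N_{k+1} > N$, i.e.\ from choosing $L_j \geq N_j$ at each stage.
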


Theorems \ref{thm:ma}, \ref{thm:wa} and \ref{thm:ea} can be proved by adjusting the method in the previous sections. 
%As before, proving these theorems relies on (an appropriate version of) Rohlin tower theorem
%The only difference is that the Rohlin tower of amenable actions does not behave as nicely as that of $\mathbb{Z}$-actions, and so we need to use the tools developed in \cite{OW} to answer these questions. 
The main novelty is the use of the more sophisticated Ornstein-Weiss Rohlin tower theorem for amenable actions instead of the classical Rohlin's result.

Since the proofs of Theorems \ref{thm:ma}, \ref{thm:wa} and \ref{thm:ea} are similar to those of Theorems \ref{thm:hm}, \ref{dense-one-under} Part (ii) and \ref{thm:e0}, respectively,
we will only prove Theorem \ref{thm:ma} in this paper and leave the proofs of Theorem  \ref{thm:wa} and \ref{thm:ea} to the interested reader.

\subsection{Ornstein-Weiss Rohlin tower theorem for amenable actions}
We start with recalling some definitions from \cite{OW}. Let $G$ be a countable amenable group. Let $K\subset G$ be finite and let $\delta>0$. We say that a finite subset $A\subset G$ is \emph{$(K,\delta)$-invariant} if 
$$\frac{\vert\{ g\in G\colon Kg\cap A\neq\emptyset \text{ and } Kg\cap (G\backslash A)\neq\emptyset\}\vert}{\vert A\vert}<\delta.$$
The set $\{ g\in G\colon Kg\cap A\neq\emptyset \text{ and } Kg\cap (G\backslash A)\neq\emptyset\}$ is called the \emph{$K$-boundary} of $A$.

For $H,B\subseteq G$, if the sets $hB, h\in H$ are pairwise disjoint, we say that $HB$ is an \emph{$H$-tower} with \emph{base} $B$.

A collection of subsets $A_{1},\dots,A_{k}$ of $G$ is \emph{$\epsilon$-disjoint} if there exist $A'_{i}\subseteq A_{i}$ such that $\vert A'_{i}\vert>(1-\epsilon)\vert A_{i}\vert, A'_{i}\cap A'_{j}=\emptyset$ for all $1\leq i\leq k, i\neq j$.
We say that a collection of subsets $A_{1},\dots,A_{k}$ of $G$ \emph{$\alpha$-cover} a subset $D$ of $G$ if $\vert D\cap (\cup_{i=1}^{k}A_{i})\vert\geq \alpha\vert D\vert$.

We say that a finite collection of subsets $\{G_{1},\dots,G_{N}\}$ of $G$  \emph{$\epsilon$-quasi-tile} $G$ if $e_{G}\in G_{1}\subset G_{2}\subset\dots\subset G_{N}$ and for any finite set $D\subseteq G$, there exist sets $C_{i}, 1\leq i\leq N$ such that 
\begin{itemize}
	\item for fixed $i$, all the sets $G_{i}c, c\in C_{i},$ are $\epsilon$-disjoint;
	\item for $i\neq j$, $G_{i}C_{i}\cap G_{j}C_{j}=\emptyset$;
	\item the sets $G_{i}C_{i}, 1\leq i\leq N$, $(1-\epsilon)$-cover $D$.
\end{itemize}

\begin{thm}[\cite{OW}, p.24]\label{thm:1.6} Given $\epsilon>0$, there is an $N=N(\epsilon)$ such that for every countable amenable group $G$, every finite $K\subseteq G$ and $\delta>0$, there are subsets $\{T_{1},\dots,T_{N}\}$ of $G$ that are $(K,\delta)$-invariant and $\epsilon$-quasi-tile $G$.
\end{thm}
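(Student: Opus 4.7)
My plan is to follow the classical Ornstein--Weiss strategy of iterated greedy tiling, producing the tiles $T_{1}\subset T_{2}\subset\cdots\subset T_{N}$ in a nested sequence of increasing invariance and then verifying that a suitable greedy packing procedure quasi-tiles any finite $D\subseteq G$. The first step would be to fix $N=N(\epsilon)$ together with a positive constant $\alpha=\alpha(\epsilon)$ for which $(1-\alpha)^{N}<\epsilon$; the value of $\alpha$ will come from a packing density lemma stated in the next step, and the choice of $N$ ensures that the uncovered fraction after $N$ greedy rounds is less than $\epsilon$.

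Next, using the amenability of $G$, I would construct the tiles inductively. Given any finite $K\subseteq G$ and any $\delta>0$, amenability produces a $(K,\delta)$-invariant finite set, which after a left translation may be taken to contain $e_{G}$. So pick $T_{1}$ to be $(K,\delta)$-invariant for the given input $K$, $\delta$. Having built $T_{1},\ldots,T_{k}$, choose $T_{k+1}\supseteq T_{k}$ to be $(K_{k+1},\delta_{k+1})$-invariant for a much larger finite $K_{k+1}\supseteq K\cup T_{k}$ and a much smaller $\delta_{k+1}\ll\delta_{k}$, tuned so that the packing lemma below will apply to $T_{k+1}$ even after arbitrary $\epsilon$-disjoint families of translates of the $T_{j}$, $j\leq k$, have been removed.

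The combinatorial heart of the argument is a Vitali-type packing lemma: if $T\subseteq G$ is sufficiently invariant, then for every finite $D\subseteq G$ there exists $C\subseteq G$ such that $\{Tc:c\in C\}$ is $\epsilon$-disjoint and $|TC\cap D|\geq\alpha|D|$. I would prove this by a maximal argument: greedily add translates $Tc$ anchored at points of $D$ not yet covered and $\epsilon$-disjoint from those already chosen, and stop when no such translate exists; maximality together with the invariance of $T$ forces a fixed positive density of coverage. With this lemma in hand, the greedy algorithm applied to $D$ starts with $T_{N}$ to produce $C_{N}$, sets $D_{N-1}:=D\setminus T_{N}C_{N}$, applies the packing lemma with $T_{N-1}$ to produce $C_{N-1}\subseteq G$ packing $D_{N-1}$, and continues downward to $T_{1}$. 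The within-level $\epsilon$-disjointness is built into the construction; the between-level disjointness $T_{i}C_{i}\cap T_{j}C_{j}=\emptyset$ is automatic because we strip off covered regions before packing again; and after $N$ rounds the uncovered fraction is at most $(1-\alpha)^{N}<\epsilon$.

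The main obstacle is the quantitative tuning of the invariance parameters. To guarantee that each greedy stage still attains density $\alpha$ on the residual set $D_{i}$, the tile $T_{k+1}$ must be so much more invariant than $T_{1},\ldots,T_{k}$ that its small boundary dominates the accumulated boundary errors from all previously placed tiles; in particular $K_{k+1}$ must engulf the combined footprint of the earlier tiles, and $\delta_{k+1}$ must shrink rapidly enough to absorb the telescoping boundary losses. Carrying out this bookkeeping carefully, and verifying that the promised $N=N(\epsilon)$ depends only on $\epsilon$ (and not on the specific $G$, $K$, or $\delta$), is the delicate technical heart of the classical Ornstein--Weiss proof.
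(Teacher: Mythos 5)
The paper does not prove this statement at all: it is quoted verbatim (with attribution) from Ornstein--Weiss, so there is no internal argument to compare against. Your outline is indeed the standard Ornstein--Weiss strategy (nested, increasingly invariant tiles; a maximal/Vitali-type packing lemma giving a fixed covering density $\alpha(\epsilon)$ per round; $N$ chosen so that $(1-\alpha)^{N}<\epsilon$, which is why $N$ depends only on $\epsilon$), and the observation that taking $K_{k+1}\supseteq K$ and $\delta_{k+1}\leq\delta$ makes every $T_{k}$ automatically $(K,\delta)$-invariant is correct.

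There is, however, a genuine gap in the step you dismiss as automatic. When you pack the residual set $D_{i}=D\setminus\bigcup_{j>i}T_{j}C_{j}$ with translates $T_{i}c$ anchored at points $c\in D_{i}$, those translates are not contained in $D_{i}$; they can protrude and meet the previously placed sets $T_{j}C_{j}$, $j>i$, so the required exact between-level disjointness $T_{i}C_{i}\cap T_{j}C_{j}=\emptyset$ does not follow from ``stripping off covered regions.'' In the actual Ornstein--Weiss argument this is handled by selecting only translates (essentially) contained in the uncovered region, and that is precisely where the difficulty migrates into your packing lemma: the maximality argument easily bounds $|T C|$ from below, but what the induction needs is $|TC\cap D_{i}|\geq\alpha|D_{i}|$ with $TC$ disjoint from the earlier levels, and the residual sets $D_{i}$ have no invariance whatsoever, so you cannot argue that ``most of $D_{i}$ lies in the $T_{i}$-interior of $D_{i}$.'' The classical resolution is not to demand more invariance of $T_{k+1}$ over the accumulated boundaries, but to run the counting argument for a maximal $\epsilon$-disjoint family of translates lying inside the uncovered set, using only the $\epsilon$-disjointness bookkeeping (plus the relative invariance $T_{i+1}$ versus $T_{i}$ fixed in advance) to extract a covering fraction of order $\epsilon$ at every stage regardless of the shape of the residual set. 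As written, your sketch asserts the two points where the proof actually has content (inter-level disjointness and density on non-invariant residuals) rather than proving them, so it would not compile into a complete argument without supplying exactly the Ornstein--Weiss packing analysis.
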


\begin{thm}[\cite{OW}, p.59]\label{thm:2.5}
	Let $G$ be a countable amenable group and $\epsilon>0$. Then there exist a finite set $K_{0}=K_{0}(\e)\subseteq G$ and $\delta_{0}=\delta_{0}(\e)>0$ such that for any finite set $K_{0}\subseteq K\subseteq G$ and $0<\delta<\delta_{0}$, and any $\{G_{1},\dots,G_{k}\}$ which are $(K,\delta)$-invariant subsets of $G$ that $\e$-quasi-tile $G$, 
	there exist $V_{i}^{j}\in\mathcal{B}, 1\leq j\leq L_{i},1\leq i\leq k$ such that
	\begin{itemize}
		\item each $R^{j}_{i}:=G_{i}V^{j}_{i}, 1\leq j\leq L_{i}$ is a $G_{i}$-tower;
		\item For each $1\leq i\leq k$, the sets $\{R^{1}_{i},\dots,R^{L_{i}}_{i}\}$ are $\e$-disjoint;
		\item For $i\neq i'$ and every $j,j'$, we have that $R^{j}_{i}\cap R^{j'}_{i'}=\emptyset$;
		\item $\mu(\bigcup_{i=1}^{k}\bigcup_{j=1}^{L_{i}}R^{j}_{i})>1-\e$.
	\end{itemize}
\end{thm}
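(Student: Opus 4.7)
The plan is to construct the tower bases $V_i^j$ greedily, working from the largest shape $G_k$ down to $G_1$, following the scheme of the original Ornstein-Weiss argument. I would choose $K_0(\e)$ large and $\delta_0(\e)$ small enough that the $K$-boundary of any $(K,\delta)$-invariant shape $G_i$ consumes at most $\e/(2N(\e))$ of $|G_i|$, where $N(\e)$ is the number of shapes supplied by Theorem \ref{thm:1.6}; this way the accumulated boundary losses across all levels stay well below $\e$. The $\e$-quasi-tiling hypothesis on $\{G_1,\dots,G_k\}$ is exactly the combinatorial input needed to guarantee that a maximal $\e$-disjoint tower packing in $X$ exhausts all but $\e$ of the measure.

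The inductive construction proceeds as follows. Suppose the families of bases at levels $k, k-1, \dots, i+1$ have been chosen, and let $Y_i$ denote the complement in $X$ of the union of all previously built towers. Inside $Y_i$ I would select, by a standard exhaustion-and-Zorn argument, a finite sequence $V_i^1, \dots, V_i^{L_i} \in \mathcal{B}$ so that each $G_i V_i^j$ is a $G_i$-tower disjoint from every earlier tower and so that $\{G_i V_i^j\}_j$ is $\e$-disjoint. At each sub-step one picks a new base of measure close to the supremum of admissible masses, where admissibility means the new tower is disjoint from the earlier levels and $\e$-disjoint from the previously chosen towers at level $i$; the sub-process halts when this supremum drops below a chosen threshold, giving finite $L_i$ and a family that is maximal in the appropriate sense. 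The disjointness across distinct levels $i \neq i'$ and the $G_i$-tower property are built into the admissibility condition, so only the measure bound remains.

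The main obstacle is verifying $\mu\bigl(\bigcup_{i,j} R_i^j\bigr) > 1-\e$. I would argue by contradiction: if the uncovered set $U = X \setminus \bigcup_{i,j} R_i^j$ had measure $\geq \e$, then for a generic $x$ and a sufficiently F\o lner-like finite set $F \subseteq G$ the orbit trace $D(x) = \{g \in F : gx \in U\}$ has density close to $\mu(U)$ by the pointwise ergodic theorem. Applying the $\e$-quasi-tiling property of $\{G_1,\dots,G_k\}$ to $D(x)$ yields center sets $C_1(x),\dots,C_k(x)$ satisfying the three quasi-tiling conditions and $(1-\e)$-covering $D(x)$. Using $(K,\delta)$-invariance of each $G_i$ to absorb the $K$-boundary discrepancy, together with a measurable selection of $x \mapsto (C_i(x))_{i=1}^{k}$, one transfers this combinatorial tiling back into the dynamics to produce, at some level $i$, a positive-measure base $V \subseteq U$ whose $G_i$-tower is admissible at that level — contradicting the maximality in the previous paragraph. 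The delicate part is precisely this measurable transfer and the bookkeeping of boundary errors: each shape contributes a $K$-boundary loss, and one must calibrate $K_0, \delta_0$ so that the total loss over the $k \leq N(\e)$ levels is absorbed by the available $\e$-budget. This is the Ornstein-Weiss quasi-tiling transfer principle, and constitutes essentially all of the work in the proof.
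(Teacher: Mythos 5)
This statement is not proved in the paper at all: it is quoted verbatim from Ornstein--Weiss \cite{OW}, p.~59, as a black box, so the only meaningful comparison is with the original Ornstein--Weiss argument that your sketch tries to reproduce. As a reconstruction of that argument, what you have written is an outline in the right spirit (quasi-tiling plus a transfer from the group to the measure space), but the two points on which the whole theorem rests are either missing or do not work as stated. First, freeness of the action is never used anywhere in your construction, yet without it the theorem is false (if $T_g=\mathrm{id}$ for some $g\in G_i\setminus\{e_G\}$ there are no nontrivial $G_i$-towers at all); the very first step of your greedy scheme --- producing a single positive-measure base $V$ with $(T_gV)_{g\in G_i}$ pairwise disjoint --- already requires a Rokhlin-type lemma for free actions, which is itself a nontrivial ingredient of \cite{OW} and cannot be waved through as ``a standard exhaustion-and-Zorn argument.''

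Second, your mechanism for the covering estimate $\mu(\bigcup_{i,j}R_i^j)>1-\epsilon$ is a maximality-contradiction that does not go through as described. The $\epsilon$-quasi-tiling property applied to the orbit trace $D(x)=\{g\in F\colon gx\in U\}$ of the uncovered set produces centers $C_i(x)$ such that the translates $G_iC_i(x)$ $(1-\epsilon)$-\emph{cover} $D(x)$, but nothing forces these translates to be \emph{contained} in $D(x)$; they may (and in general do) overrun the part of the orbit already occupied by the towers you have built, so the new tower they would generate is not admissible and maximality of your greedy family is not contradicted. In the actual Ornstein--Weiss proof the quasi-tiling is applied to (the trace on a large F\o lner set of) the \emph{whole} orbit, the bases $V_i^j$ are then defined directly from a measurable, orbitwise-consistent choice of the centers, and the $(1-\epsilon)$ coverage of the F\o lner set is converted into the measure estimate via the ergodic theorem; there is no step of the form ``a maximal packing must exhaust $1-\epsilon$ of the space.'' You flag the ``measurable transfer and bookkeeping of boundary errors'' as the delicate part and defer it, but that transfer, together with the freeness input, \emph{is} the proof; as written, the argument establishes the easy structural bullets (tower property, $\epsilon$-disjointness, disjointness across levels) and leaves the measure bound --- the actual content of the theorem --- unproven.
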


\subsection{Over-independence for mixing actions of amenable groups}   

\begin{lem}\label{roh4}
	 Let $G$ be a countable amenable group and $(F_{N})_{N\in\mathbb{N}}$ be a F\o lner sequence of $G$. Let $(X,\mathcal{B},\mu,(T_{g})_{g\in G})$ be an ergodic and free probability measure preserving system. For every $C\in\mathcal{B}, N\in\N$, $0<a<1-\mu(C)$, if $\e>0$ is sufficiently small depending only on $a$, then there exists $A\in\mathcal{B}$ such that $\mu(A)=a, A\cap C=\emptyset,$ and
	$$\mu((C\cup A)\cap T_{c_{1}}A\cap\dots\cap T_{c_{d}}A)>(1-\e)\mu(A)$$
	for all $c_{i}\in F_{N}$. 
\end{lem}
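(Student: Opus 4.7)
The plan is to mirror the proof of Lemma \ref{roh2}, replacing the classical Rohlin tower by the Ornstein--Weiss quasi-tiling apparatus (Theorems \ref{thm:1.6} and \ref{thm:2.5}). First fix a finite symmetric $K \subseteq G$ containing $F_N \cup F_N^{-1} \cup F_N^{-1}F_N \cup \{e_G\}$ together with $K_0(\e')$, where $\e' > 0$ is an auxiliary parameter. Pick $\delta \in (0, \delta_0(\e'))$ small enough that $C_d(\delta + \e') < \e a$ for a constant $C_d$ depending only on $d$ and $|F_N|$. Applying Theorem \ref{thm:1.6} yields $(K, \delta)$-invariant sets $G_1, \ldots, G_k$ that $\e'$-quasi-tile $G$, and Theorem \ref{thm:2.5} then produces base sets $V_i^j \in \mathcal{B}$ such that the $G_i$-towers $R_i^j := G_i V_i^j$ are essentially disjoint and satisfy $\mu(\bigcup_{i,j} R_i^j) > 1 - \e'$.

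Next I construct $A$ by a measure-tuning step. Using the atomlessness of $\mu$ (which follows from the freeness and ergodicity of $T$), select a nested family of measurable subsets $W_i^j(t) \subseteq V_i^j$ with $\mu(W_i^j(t)) = t \mu(V_i^j)$. The function $t \mapsto \mu((\bigcup_{i,j} G_i W_i^j(t)) \setminus C)$ is continuous in $t$, vanishes at $t = 0$, and at $t = 1$ exceeds $\mu(\bigcup_{i,j} R_i^j) - \mu(C) \geq (1 - \e') - \mu(C)$, which is larger than $a$ for $\e'$ small. Choose $t^*$ so that this quantity equals $a$ and set $A := (\bigcup_{i,j} G_i W_i^j(t^*)) \setminus C$; then $\mu(A) = a$ and $A \cap C = \emptyset$.

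For the intersection estimate, fix $c_1, \ldots, c_d \in F_N$ and define the $K$-interior $G_i^\circ := \{g \in G_i : Kg \subseteq G_i\}$, which by $(K,\delta)$-invariance satisfies $|G_i \setminus G_i^\circ| \leq \delta |G_i|$. For $y = T_g v \in A$ with $g \in G_i^\circ$ and $v \in W_i^j(t^*)$, each $T_{c_l} y = T_{c_l g} v$ lies in $G_i W_i^j(t^*) \subseteq C \cup A$, because $c_l \in K$ forces $c_l g \in G_i$. This bounds $\mu(T_{c_l} A \setminus (C \cup A)) \leq \delta$, and an analogous analysis, using the identity $\bigcap_l T_{c_l} A = T_{c_1}\bigl(A \cap \bigcap_{l \geq 2} T_{c_1^{-1} c_l} A\bigr)$ together with $F_N^{-1} F_N \subseteq K$, lower-bounds $\mu(\bigcap_l T_{c_l} A)$ by $\mu(A) - (d - 1)\delta$. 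Combining the two and absorbing the $\e'$ slack from the uncovered complement of $\bigcup R_i^j$ produces $\mu((C \cup A) \cap T_{c_1} A \cap \dots \cap T_{c_d} A) > (1 - \e)\mu(A)$.

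The principal obstacle, relative to the classical proof of Lemma \ref{roh2}, is coordinating many towers of differing shapes $G_i$: no single tower covers $X$, so the interior/boundary counting must be performed tower-by-tower and summed, with $\e'$ simultaneously absorbing the quasi-tiling slack and the uncovered complement. The set $K$ must be chosen to contain $F_N$ (so that $c_l g$ stays inside $G_i$ for interior $g$) and $F_N^{-1} F_N$ (so that the cross-translations arising in lower-bounding $\mu(\bigcap_l T_{c_l} A)$ are equally well-behaved); this is the decisive departure from the single-translation $\mathbb{Z}$-case, and correctly calibrating $K$, $\delta$, and $\e'$ against the target bound $\e a$ is the main technical step.
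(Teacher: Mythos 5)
Your proof follows essentially the same route as the paper's: apply Theorems \ref{thm:1.6} and \ref{thm:2.5} to get an Ornstein--Weiss quasi-tiling, use atomlessness (from freeness and ergodicity) to carve out sub-columns of total measure exactly $a$ disjoint from $C$, and then do an interior/boundary count tower-by-tower using the $(K,\delta)$-invariance of the shapes $G_i$. One caveat: your step lower-bounding $\mu(\bigcap_{l} T_{c_{l}}A)$ by $\mu(A)-(d-1)\delta$ silently drops the loss $\mu(A\cap T_{c_{1}^{-1}c_{l}}C)$ coming from translates of $A$ that land in $C$ rather than in $A$ (so it really only controls $\bigcap_{l} T_{c_{l}}(A\cup C)$); the paper's own proof elides exactly the same point, and the lemma is only invoked with $d=1$, where the issue is vacuous.
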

\begin{proof}
		Fix $\epsilon>0$ and let $K_{0}(\e)$ and $\delta_{0}(\e)$ be chosen as in Theorem \ref{thm:2.5} for this $\epsilon$. Pick any $K_{0}(\e)\subseteq F_{N}$ and $\delta>\delta_{0}(\e)$.
	By Theorem \ref{thm:1.6}, there exist $(F_{N},\delta)$-invariant sets $\{G_{1},\dots,G_{k}\}$ which $\e$-quasi-tile $G$. By Theorem \ref{thm:2.5}, there exists $V_{i}^{j}\in\mathcal{B}, 1\leq j\leq L_{i},1\leq i\leq k$ such that
	\begin{itemize}
		\item each $R^{j}_{i}:=G_{i}V^{j}_{i}, 1\leq j\leq L_{i}$ is a $G_{i}$-tower;
		\item For each $1\leq i\leq k$, the sets $\{R^{1}_{i},\dots,R^{L_{i}}_{i}\}$ are $\e$-disjoint;
		\item For $i\neq i'$ and every $j,j'$, we have that $R^{j}_{i}\cap R^{j'}_{i'}=\emptyset$;
		\item $\mu(\bigcup_{i=1}^{k}\bigcup_{j=1}^{L_{i}}R^{j}_{i})>1-\e$.
	\end{itemize}
	Since $X$ is ergodic and free, it is also atomless. So there exists a set $I=\bigcup_{i=1}^{k}\bigcup_{j=1}^{L_{i}}I^{j}_{i}$ with $I^{j}_{i}\subseteq V^{j}_{i}$ for all $i,j$, such that the set
	$$A=\Bigl(\bigcup_{i=1}^{k}\bigcup_{j=1}^{L_{i}}G_{i}I^{j}_{i}\Bigr)\backslash C$$
	has the property that $\mu(A)=a$. We claim that this set satisfies the requirements stipulated in the formulation of Lemma \ref{roh4}. 
	
	Obviously $A\cap C=\emptyset$. 
	Let $U_{i}$ denote the $F_{N}$-boundary of $G_{i}$ for $1\leq i\leq k$.  
	Note that if $x\in g_{i}I^{j}_{i}\cap A$ for some $i,j$ and $g_{i}\in G_{i}\backslash U_{i}$, then $T_{g}x\in G_{i}I^{j}_{i}\subseteq A\cup C$ for all $g\in F_{N}$. So
	$$\mu((C\cup A)\cap T_{c_{1}}A\cap\dots\cap T_{c_{d}}A)>\mu(A)-\mu(A')$$
	for all $c_{i}\in F_{N}$, where 
	$$A':=\bigcup_{i=1}^{k}\bigcup_{j=1}^{L_{i}}U_{i}I^{j}_{i}.$$ 
	
	Since for each $1\leq i\leq k$, the sets $\{R^{1}_{i},\dots,R^{L_{i}}_{i}\}$ are $\e$-disjoint, by the fact that $G_{i}$ are $(F_{N},\epsilon)$-invariant, we have that
	$\mu(A')\leq 10\e$. Therefore, 
	$$\mu((C\cup A)\cap T_{c_{1}}A\cap\dots\cap T_{c_{d}}A)>\mu(A)-10\e>(1-\sqrt{\e})\mu(A)$$
	if $\e$ is sufficiently small depending only on $\mu(A)$. This finishes the proof.
\end{proof}

\begin{proof}[Proof of Theorem \ref{thm:ma}]
	
	Let $(F_{N})_{N\in\mathbb{N}}$ be a F\o lner sequence of $G$, and $0<a_{i},\e_{i}<1, i\in\mathbb{N}$ to be chosen later.
	Let $A_{1}$ be an arbitrary set with $\mu(A_{1})=a_{1}$. 
	Since the system is mixing, there exists $N_{1}\in\N$ such that 
	$$\vert \mu(A_{1}\cap T_{g}A_{1})-\mu(A_{1})^{2}\vert<\e_{1}\mu(A_{1})^{2}$$
	for all $g\notin F_{N_{1}}$.
	
	Suppose $A_{i}, N_{i}$ are chosen for all $i\leq k$. Denote $C_{j}=\bigcup_{i=1}^{j}A_{i}$. Let $A_{k+1}$ be
	such that $\mu(A_{k+1})=a_{k+1}, A_{k+1}\cap C_{k}=\emptyset,$ and
	$$\mu((C_{k}\cup A_{k+1})\cap T_{g}A_{k+1})>(1-\e_{k})\mu(A_{k+1})$$
	for all $g\in F_{N_{k}}$. The existence of $A_{k+1}$ is guaranteed by Lemma \ref{roh4} if $\e_{k}\ll a_{k+1}$ and $0<\sum_{i=1}^{\infty}a_{i}<1$. Let $N_{k+1}>N_{k}$ be such that 
	$$\vert\mu(C_{k+1}\cap T_{g}C_{k+1})-\mu(C_{k+1})^{2}\vert<\e_{k+1}\mu(C_{k+1})^{2}$$
	for all $g\notin F_{N_{k+1}}$.
	
	We claim that $A=\bigcup_{i=1}^{\infty}A_{i}$ satisfies the conditions of the theorem. 
	 If $g\in F_{N_{1}}$, then
	 \begin{equation}\nonumber
	 \begin{split}
	 \mu(A\cap T_{g}A)
	 \geq \sum_{k=2}^{\infty}\mu(A\cap T_{g}A_{k})
	 \geq \sum_{k=2}^{\infty}(1-\epsilon_{1})\mu(A_{k})=(1-\epsilon_{1})a/2>a^{2}=\mu(A)^{2},
	 \end{split}
	 \end{equation}
	 provided that $a$ is sufficiently small and $\e_{1}<1/2$.
	
	Now suppose that $g\in F_{N_{k+1}}\backslash F_{N_{k}}$ for some $k>0$. Then
	\begin{equation}\nonumber
	\begin{split}
	&\mu(A\cap T_{g}A)
	\geq \mu(C_{k+1}\cap T_{g}C_{k+1})+\sum_{i=2}^{\infty}\mu(C_{k+i}\cap T_{g}A_{k+i})
	\\&>(1-\e_{k+1})\mu(C_{k+1})^{2}+\sum_{i=2}^{\infty}(1-\e_{k+i})\mu(A_{k+i})
	\\&=(1-\e_{k+1})(a_{1}+\dots+a_{k+1})^{2}+\sum_{i=2}^{\infty}(1-\e_{k+i})a_{k+i}.
	\end{split}
	\end{equation}
	If we pick $a_{i}=\frac{a}{i(i+1)}$, $a$ sufficiently small, and $\e_{i}$ decreasing to 0 sufficiently fast, then 
	\begin{equation}\nonumber
	\begin{split}
	&\quad(1-\e_{k+1})(a_{1}+\dots+a_{k+1})^{2}+\sum_{i=2}^{\infty}(1-\e_{k+i})a_{k+i}
	\\&>(1-\e_{k+1})((a-\frac{a}{k+2})^{2}+\frac{a}{k+2})>a^{2}=\mu(A)^{2}.
	\end{split}
	\end{equation}  
	This finishes the proof.
\end{proof}

\end{document}